\newtheorem{theorem}[subsection]{Theorem}
\newtheorem{lemma}[subsection]{Lemma}
\newtheorem{proposition}[subsection]{Proposition}
\newtheorem{corollary}[subsection]{Corollary}
\newtheorem{hypothesis}
{Hypothesis}
\theoremstyle{definition}
\newtheorem{definition}[subsection]{Definition}
\theoremstyle{remark}
\newtheorem{remark}[subsection]{Remark}
\newcommand{\norm}[1]{\left\| #1\right\| }
\def\al{\alpha}
\def\de{\delta}
\def\ga{\gamma}
\def\veps{\varepsilon}
\def\vphi{\varphi}
\def\lam{\lambda}
\def\om{\omega}
\def\De{\Delta}
\def\Om{\Omega}
\def\sign{\mathrm{sign}}
\def\iy{\infty}
\def\pa{\partial}
\def\sH{\mathscr{H}}
\def\To{\Rightarrow}
\newcommand{\crr}{{\color{red}$\dagger$}}
\newcommand{\dive}{\mathrm{div}\,}
\newcommand{\la}{\langle}
\newcommand{\ra}{\rangle}
\newcommand{\hr}{\hookrightarrow}
\newcommand{\inv}{^{-1}}
\newcommand{\td}{\tilde}
\newcommand{\wtd}{\widetilde}
\newcommand{\nd}{\noindent}
\newcommand{\n}{\newline}
\newcommand{\vs}{\vspace}
\newcommand{\Z}{\mathbb{Z}}
\newcommand{\R}{\mathbb{R}}
\begin{document}

\title[Threshold for NLS with Rotation]{Threshold for Blowup and Stability for Nonlinear Schr\"odinger Equation with Rotation}
\author{Nyla Basharat, Hichem Hajaiej, Yi Hu and Shijun Zheng}
\address[Nyla Basharat]{Department of Mathematics and Statistics\\
University of Saskatchewan, Saskatoon, SK  S7N 5E6, Canada}
\email{nylabasharat12@gmail.com}
\address[Hichem Hajaiej]{Department of Mathematics, California State University, Los Angeles, CA 90032}
\email{hhajaie@calstatela.edu} 
\address[Yi Hu]{Department of Mathematical Sciences\\
        Georgia Southern University, Statesboro, GA 30460}
\email{yihu@GeorgiaSouthern.edu}
\address[Shijun Zheng]{Department of Mathematical Sciences\\
        Georgia Southern University, Statesboro, GA 30460}
\email{szheng@GeorgiaSouthern.edu}
\date{}

\subjclass[2010]{35Q55, 37K45, 35P25} 
\keywords{NLS, angular momentum, ground states, blowup, orbital stability}

\vspace{.1260in}
\setcounter{equation}{0}

\maketitle 



\maketitle

\begin{abstract} 
We consider the focusing NLS with an angular momentum  and a harmonic potential, 
which models  Bose-Einstein condensate under a rotating magnetic trap. 
We give a sharp condition on the global existence and blowup in  the mass-critical case. 
We further consider the stability of such systems  via variational method. 
We  determine that at the critical exponent $p=1+4/n$,
the mass of $Q$, the ground state for the  NLS with zero potential, is the threshold for both finite time blowup and orbital instability. Moreover, we prove 
 a sharp threshold theorem  for the  rotational NLS with an inhomogeneous nonlinearity.  
 The analysis relies on the existence of ground state as well as a virial identity for  
the associated kinetic-magnetic operator. 
\end{abstract}


\vs{.20in} 

\tableofcontents

\section{Introduction}\label{s:intro} 
The Cauchy problem for the nonlinear Schr\"odinger equation (NLS), or Gross-Pitaevskii equation with rotation in $(t, x) \in \mathbb{R}^{1+n}$, $n\ge 2$ reads 
	\begin{align}\label{e:nls-H-rot}
	&i u_t = -\frac12\Delta u + V(x) u -\lam |u|^{p-1} u +L_\Om u \\ 
	 &   u(0) = u_0 \in \mathscr{H}^1,\notag
	\end{align}
where $p\in [1,1+4/(n-2))$, 
$V(x) := \frac12\gamma^2 |x|^2$ is the harmonic potential with frequency $\gamma>0$ that models the magnetic trap and
$\lam $ is a real constant. 
Adding a spin symmetry, the angular momentum operator is denoted by $L_\Omega:=-\Om L_z=iA\cdot \nabla$, 
where $\Om\in\R$ is the 
 rotation frequency, $L_z = i(x_2 \partial_{x_1} - x_1 \partial_{x_2})$ and $A=\Om\la -x_2,x_1,0,\dots,0\ra$.  
The weighted Sobolev space is given by $\sH^{s}=\sH^{s,2}$,  
where 
	\begin{align*}
	\mathscr{H}^{s,r} := \{ f \in L^r(\mathbb{R}^n): (-\De)^{s/2} f \in L^r, \la x\ra^s f \in L^r \},
	\end{align*} 
with $\la x\ra=(1+|x|^2)^{1/2}$. 
Then $\sH^1=\Sigma:=H^1\cap L^2(|x|^2dx)$
arises naturally as a suitable energy space in this setting, where $H^1$ is the usual Sobolev space.  

When $n=2,3 $, the NLS above models Bose-Einstein condensation with rotation \cite{Af06,
BaoCai13,BaoWaMar05,BuRo99,MAHHWC99,Sei02}, 
 which is a remarkable system arising in 
optics, 
{plasma}, superfluids, spinor particles, quantized vortices and surface waves. 
Extensions to higher dimensions can be found in \cite{BHZ19a,Gar12}. 
Mathematically, it can be viewed as the mean field limit   of rotating many-body bosons in a confining trap
\cite{LeNR14,LieS06}. 
 Equation \eqref{e:nls-H-rot} can be formally derived from
	\begin{align*}
  i \frac{\partial u}{\partial t} = \frac{\delta H}{\delta \bar{u}},
	\end{align*}
where $H$ is the associated Hamiltonian
	\begin{align*}
	H[u] = \int \left( \frac12|\nabla u|^2 + V |u|^2 -\frac{2 \lam}{p+1} |u|^{p+1} -\Om \, \overline{u} L_z u \right).
	\end{align*}
It is desirable to provide  rigorous mathematical descriptions for the rotating BEC model. 
As is known, the well-posedness and blowup for the  standard NLS 
have been extensively studied for a few decades  in the energy sub-critical regime $p<1+\frac{4}{n-2}$
and  the  energy critical regime $p=1+\frac{4}{n-2}$, see e.g. the expository in \cite{Tao2006}. 
For NLS with a harmonic potential, the analogue were considered in e.g.  \cite{Car02a,KVZ2009,Zh00}.
For the rotational NLS (RNLS) in (\ref{e:nls-H-rot}), 
the local wellposedness (l.w.p.)  
has been considered in  e.g. \cite{AMS12,CazE88,HHsiaoL07b} when $n=2, 3$, and  \cite{deB91,GaZ13a,Mi08,Z12a}  for general magnetic NLS (mNLS), to list  a few. 
RNLS (\ref{e:nls-H-rot}) can be written in the magnetic form (\ref{e:ut-L-p_AV}), 
thus the wellposedness follows from that of  mNLS if $p\in (1, 1+4/(n-2))$ 
in view of Proposition  \ref{pr:gwp-lwpUav}. 

In the focusing case, i.e., $\lam>0$, if  $p<1+{4}/{n}$, it is known that all $\sH^1$-solutions of equation (\ref{e:nls-H-rot}) exist globally in time. However, if $\lam>0$ and 
$p\in [1+\frac{4}{n},1+\frac{4}{n-2})$, there exist finite time blowup solutions for \eqref{e:nls-H-rot}, see  e.g. \cite{AMS12,BHIM,Car02a,Gon91b}.  
So, notably $p=1+4/n$ is  the mass-critical case, where 
the situation becomes more subtle and complex, and the occurrence of global  existence or wave collapse  
 depends on both the size and 
  the profile of the initial data. 
  Thus, it is an open question to find a threshold condition that distinguishes the g.w.p. and blowup for the focusing RNLS. 

 
 In this paper, we are mainly concerned with the focusing mass-critical case $p=1+4/n$.  
Inspired by the work in \cite{Wein83} and \cite{Zh00},   
we address this  so-called ``minimal mass blowup''  problem for \eqref{e:nls-H-rot}.  
To our knowledge, previous techniques and ideas  do not directly apply to solving this threshold problem.  
One delicate issue is that the threshold profile for \eqref{e:nls-H-rot} turns out {\em not} to be the minimizer for the associated energy!  
  This is one essential reason that makes the problem more challenging.   
  Technically, in the presence of $V$ and $L_\Om$,
some of the key symmetries for the standard NLS (including scaling  and translation invariance) 
are broken, whose geometry of the trajectories of motion of particles has  the effect on relevant physical phenomenon of wave collapse as well as stability of solitons.  
 We wish to note that all the results in this paper have  extensions to the case where $\lam$ is a variable in $x$ 
and $L_\Omega$ is substituted with $L_A:=iA\cdot \nabla$ where $A=Mx$, 
$M$ being a skew-symmetric matrix, see \cite{BHZ19a}.  
In physics setting, such $A$  satisfies the Poincar\'{e} gauge  ($x\cdot A=0$) 
and the Coulomb gauge  ($\dive(A)=0$) conditions. 
 For expository reason, we  first present our  theorems (Theorems \ref{t:gwp-blowup-Om} and \ref{t:stabi-masscr3}) concerning the threshold  for blowup and stability 
 in the simpler case where $\lam>0$ is a  constant and $A$ is given as in (\ref{A:Om-x}).  
 Then it might elucidate the treatment along with the proofs of the analogous results in Theorems \ref{sharpQ:inhom} and \ref{t:orb-stab-Zc}
 for the inhomogeneous case. 
    
In our first main theorem, we give a sharp threshold in terms of the unique positive radial ground state $Q=Q_\lam=Q_{\lam,1}$  with $\lam>0$
in  $H^1(\R^n)$:  
\begin{align}
& -\frac12\De Q-\lam |Q|^{p-1}Q =-Q.\label{e:grstQ-masscri}
\end{align} 
The existence of $Q$ is well-known \cite{Kw89,Me96,Wein83}.  

\begin{theorem}[threshold for g.w.p. and blowup]\label{t:gwp-blowup-Om} Let $p=1+\frac{4}{n}$ and $\lam>0$. 
Let $(\Om, \ga)$ be any given  pair in $\R\times\R_+$.  
Suppose $u_0\in \sH^1$. 
\begin{enumerate} 
\item[(a)] If $\Vert u_0\Vert_2< \Vert Q\Vert_2$, then there exists a unique global in time solution $u$ of \eqref{e:nls-H-rot} in $C(\R,\sH^1)\cap L^{2+\frac{4}{n}}_{loc}(\R, L^{2+\frac{4}{n}})$. 
\item[(b)] The condition in (a) is sharp in the sense that for all $c\ge \Vert Q\Vert_2$, 
there exists $u_0$ in $\sH^1$ satisfying 
 $\Vert u_0\Vert_2= c$ such that \eqref{e:nls-H-rot} has a finite time blowup solution.  
\end{enumerate}
\end{theorem}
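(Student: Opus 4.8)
The plan is to transfer the sharp mass threshold for the standard mass-critical NLS (Weinstein's result, \cite{Wein83}) to the rotational equation \eqref{e:nls-H-rot} by exploiting the Gagliardo–Nirenberg inequality and a conservation-law / virial argument adapted to the kinetic-magnetic operator. The first ingredient is the sharp Gagliardo–Nirenberg inequality $\|f\|_{p+1}^{p+1} \le C_{GN}\|f\|_2^{4/n}\|\nabla f\|_2^2$ with optimal constant attained by $Q$, which gives $\frac{2\lam}{p+1}C_{GN} = \frac{1}{\|Q\|_2^{4/n}}$ at $p=1+4/n$. The second ingredient is the diamagnetic-type inequality: writing $L_\Om u = iA\cdot\nabla u$ and completing the square, one has
\begin{align*}
\frac12\|\nabla u\|_2^2 + \int V|u|^2 - \Om\,\mathrm{Re}\!\int \overline{u}L_z u \;\ge\; \frac12\big\| |\nabla|u|\,|\big\|_2^2 + (\text{nonnegative trap terms}),
\end{align*}
valid whenever $\ga>|\Om|$ (sub-critical rotation), so the "magnetic kinetic energy" controls $\|\nabla|u|\|_2^2$ from below. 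These two facts, together with conservation of mass $M[u]=\|u\|_2^2$ and of the Hamiltonian $H[u]$, yield an a priori bound on $\|\nabla u(t)\|_2$ whenever $\|u_0\|_2<\|Q\|_2$: the focusing term $\frac{2\lam}{p+1}\|u\|_{p+1}^{p+1}$ is strictly dominated by the kinetic term via Gagliardo–Nirenberg with a coefficient $1-(\|u_0\|_2/\|Q\|_2)^{4/n}>0$, so $H[u]$ bounds the $\sH^1$-norm uniformly in $t$. Part (a) then follows by combining this a priori $\sH^1$-bound with the local well-posedness theory (Proposition \ref{pr:gwp-lwpUav}) and a standard continuation argument; the Strichartz-space membership $L^{2+4/n}_{loc}$ comes from the local theory.

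For the borderline $\ga = |\Om|$ one has to be slightly more careful since one of the trap terms degenerates; here I would keep the quadratic form $\frac12|\nabla u|^2 + \frac12\ga^2|x_3|^2\cdots$ in the remaining variables and note that the relevant lower bound $\tfrac12\|\nabla|u|\|_2^2$ still survives (the cross term $-\Om\,\overline u L_z u$ is absorbed into the rotating-frame gradient $|\nabla u - iA u|^2 \ge 0$), so the same argument goes through; if it genuinely fails at $\ga=|\Om|$ the theorem statement presumably restricts, but as written it claims all pairs $(\Om,\ga)\in\R\times\R_+$, so I expect the completion-of-square identity $\int\big(\tfrac12|\nabla u|^2 + V|u|^2 - \Om\overline u L_z u\big) = \int\big(\tfrac12|(\nabla - iA)u|^2 + W|u|^2\big)$ with $W = \tfrac12(\ga^2-\Om^2)(x_1^2+x_2^2) + \tfrac12\ga^2(x_3^2+\cdots)\ge 0$ to do all the work.

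For part (b), the sharpness, I would construct blowup data at every mass level $c\ge\|Q\|_2$. Start from the pseudo-conformal / lens-transform solution of the \emph{free} mass-critical NLS built from $Q$, which blows up in finite time with mass exactly $\|Q\|_2$; for $c>\|Q\|_2$ add a small, well-separated bump (or take $c\,Q_{scaled}$ adjusted so the mass is $c$) and use that the virial-type quantity associated to the kinetic-magnetic operator still becomes negative in finite time. Concretely, compute $\frac{d^2}{dt^2}\int |x|^2|u|^2\,dt$ using the virial identity stated in the abstract for the operator $-\tfrac12\Delta + L_\Om$ (the harmonic potential contributes a controlled lower-order term that can be handled by the standard Carles-type change of variables to a time-dependent frame, absorbing $V$); for suitably chosen data of mass $c$ this second derivative is bounded above by something that forces $\int|x|^2|u|^2\to 0$ in finite time, hence blowup by the uncertainty principle. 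The main obstacle is exactly the point emphasized in the introduction: the threshold profile is \emph{not} the energy minimizer, so for the construction at $c=\|Q\|_2$ one cannot simply minimize $H$; instead one must import the explicit free blowup solution and verify that the rotation and the harmonic trap, via the lens transform and the virial computation, do not destroy the finite-time singularity — this compatibility check between the explicit pseudo-conformal ansatz and the extra operators $V$, $L_\Om$ is where the real work lies.
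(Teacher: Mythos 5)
There is a genuine gap in both halves. For part (a), your coercivity comes from completing the square, $\tfrac12|\nabla u|^2+V|u|^2-\Om\,\overline{u}L_z u=\tfrac12|(\nabla-iA)u|^2+V_e|u|^2$ with $V_e=\tfrac12(\ga^2-\Om^2)(x_1^2+x_2^2)+\tfrac{\ga^2}{2}(x_3^2+\cdots)$, and this is only nonnegative when $|\Om|\le\ga$; in the fast-rotating regime $|\Om|>\ga$, which the theorem explicitly includes, $V_e$ is negative in the $(x_1,x_2)$ directions and your argument gives no a priori bound. The paper avoids this entirely by using a different conserved quantity: the angular momentum $\ell_\Om(u)=\la L_\Om u,u\ra$ is conserved separately (Proposition \ref{pr:gwp-lwpUav}(e), proved in Subsection \ref{ss:conserv-angularOm}, using only that $V$ is radial), so $E_{\Om,V}(u)-\ell_\Om(u)=\tfrac12\|\nabla u\|_2^2+\tfrac{\ga^2}{2}\|xu\|_2^2-\tfrac{2\lam}{p+1}\|u\|_{p+1}^{p+1}$ is conserved and contains no rotation term at all. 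Combined with the sharp Gagliardo–Nirenberg inequality \eqref{e:GN-deu-u2}, this bounds $\|\nabla u\|_2$ and $\|xu\|_2$ for \emph{every} pair $(\Om,\ga)$, and global existence follows from the blowup alternative. Without the observation that $\ell_\Om$ is itself a conserved quantity, your proof only yields the restricted case $|\Om|\le\ga$.

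For part (b), the construction you sketch cannot reach the endpoint $c=\|Q\|_2$ the way you describe. You propose importing the free pseudo-conformal blowup and treating the harmonic potential as "a controlled lower-order term" in the virial computation; but at the threshold the mechanism is exactly the opposite: the data $u_0=Q$ has $E_{0,0}(Q)=0$ and $J'(0)=0$, so for the free equation it is a global soliton, and it is precisely the trap term that produces blowup. In the paper this is quantified by the exact ODE \eqref{e:J''+J-EL0p}, $J''+4\ga^2 J=4\left(E_{\Om,V}(u_0)-\ell_\Om(u_0)\right)$, whose explicit solution \eqref{J(t)-Csin} shows $J(t)$ must vanish at some $T_*\le\tfrac{3\pi}{4\ga}$ whenever $E_{0,0}(u_0)\le 0$ (Lemma \ref{l:J-J'-blup}(a), allowing \emph{equality}); blowup then follows from the uncertainty principle \eqref{e:UP-deu-xu2}. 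With that sharp criterion in hand, the data $u_0=c\,e^{i\theta}\al^{n/2}Q(\al x)$, $c\ge 1$, together with the Pohozaev identity \eqref{e:pohoz-Qlam}, gives blowup at every mass level $c\|Q\|_2\ge\|Q\|_2$. The lens/pseudo-conformal route you lean on is exactly the one the paper notes cannot produce the endpoint $Q$-profile blowup, so the "compatibility check" you defer is not a technicality but the missing core of the argument.
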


 When $n=2$, the threshold value in the theorem can be evaluated at $\lam^{-\frac{1}{2}}\Vert Q_{1,1}\Vert_2=\lam^{-\frac{1}{2}} (\pi \cdot 1.86225\cdots)^{\frac{1}{2}}$, 
 see \cite{GuoSeir14,Wein83}. 
For blowup solutions, in general, the wave collapse  depends on the delicate balance 
between kinetic and potential energies (linear vs. nonlinear) as well as angular momentum for the profile of the solution, under the constriction of mass and energy conservation. 

Theorem \ref{t:gwp-blowup-Om}  shows that \eqref{e:nls-H-rot} has the same minimal mass $\norm{Q}_2^2$ for blowup  
 as in the free case $\Om=V=0$.  
The heuristic reason is  that in 
\eqref{e:nls-H-rot}  neither $V$ nor $-\Om L_z$  sees the scaling.  Note that, however, at the threshold, $u_0=Q$
leads to a soliton solution in the absence of potential while the same initial data leads to blowup solution for \eqref{e:nls-H-rot}. 

The proof of Theorem \ref{t:gwp-blowup-Om} is given in Section 5, which mainly
relies on a sharp criterion in part (a) of Lemma  \ref{l:J-J'-blup}, 
 where  we prove the blowup for \eqref{e:nls-H-rot}  for all $\Om$, $\ga$  if $p=1+{4}/{n}$, 
namely,  $u$ blows up in finite time provided $E_{0,0}(u_0)=\frac12\int |\nabla u_0|^2 -\frac{\lam n}{n+2} \int |u_0|^{2+\frac{4}{n}}\le 0$. 
This lemma is sharp in the sense that it dictates the blowup for initial data $u_0=Q$ with $E_{0,0}(Q)=0$ 
such that the $Q$-blowup profile is attainable,  
which is not covered by 
existing results in the literature as far as the authors know. 
 In Lemma \ref{l:J-J'-blup} (c)-(d) we also obtain some blowup conditions if $p>1+{4}/{n}$, 
 which allows us to show in Proposition \ref{p:blowup-masssup} a blowup result above the ground sate level. 
 Our  approach is mainly motivated by the treatment in \cite{Wein83} and \cite{Zh00}, 
where blowup results were proven with the same minimal mass in the cases  $\Om=V=0$ and $\Om=0$, $V=|x|^2$ respectively. See also related discussions 
in \cite{Car02a,Car02c}  and \cite{deB91}. 

Observe that the threshold in Theorem \ref{t:gwp-blowup-Om} 
  is valid 
  if $V$ is an isotropic harmonic potential. 
When $V$ is anisotropic, it remains an open question 
in the ``fast rotating''  regime, where $|\Om|>\underline{\ga}:=\min_{1\le j\le n}{(\ga_j)}$, 
concerning  the threshold on finite time blowup for either $p=1+4/n$ or $p>1+4/n$,
 see some numerical results in \cite{BaoCai13,BaoWaMar05}. 
In  physics, this question concerns  the scenario where the rotation frequency is stronger than the trapping frequecy,
in which case  it would be worthwhile to study the behavior of minimal mass wave collapse quantitatively. 
 
Here we would like to briefly review the pervious work on blowup for RNLS. 
The initial result  on the blowup for (\ref{e:nls-H-rot}) was obtained in \cite{Gon91b} in the case  
$ | \Omega|=\ga$,  $n=3$ if $p> 7/3$.  Recent blowup results were proven in \cite{AMS12}
in the case for all $\Om, \ga$ if $p\ge 1+4/n$, $n=2,3 $ and in \cite{BHIM,Gar12} 
for general electromagnetic potentials.  However, these results only give a  sufficient condition, 
not a {\em sharp criterion}  to address the threshold, or minimal mass blowup problem. The pre-existing results mainly assume either $E_{\Om,V}(u_0)$ or
$E_{0,V}(u_0)$ is negative,  
from which follows the blowup of the solution based on a virial identity, 
a convexity argument for the variance $J(t)=\int |x|^2 |u|^2$.  
Such assumption seems too strong in the mass-critical case. 
Notice that some generic data does not satisfy this condition: e.g. $E_{\Om,V}(Q)=E_{0,V}(Q)>0$.  
This example indicates that we need a sharper blowup condition. 
  Lemma \ref{l:J-J'-blup} thus provides such criteria with weaker conditions. 
The proof of the lemma is based on the magnetic virial identity (Lemma \ref{l:virial-J(t)-Vom}) 
 and an explicit solution of the o.d.e. (\ref{e:J''+J-EL0p}) for $J(t)$. 
Then, the blowup time can be located by examining the first zero of $J(t)$ in a more accurate way. 
Consequently, the lemma enable us to apply the sharp Gagliardo-Nirenberg inequality involving the ground 
 state given by (\ref{e:grstQ-masscri}) to prove  Theorem \ref{t:gwp-blowup-Om}
  for  all $(\Omega, \gamma)$  in all dimensions.  
  Note that if $p=1+4/n$, by evaluating the derivative of $J(t)$ at its zero $t=T_*$, we show in passing  
  a lower bound of the blowup rate $\Vert \nabla u\Vert_2\gtrsim  \Vert u_0\Vert_2(T_*-t)^{-1/2}$   in (\ref{deU-T-t}).  

Concerning the blowup rate (\ref{deU-T-t}) shown in the proof for RNLS (\ref{e:nls-H-rot}),  we add that 
when $p=1 +{4}/{n}$  and $\norm{u_0}_2=\norm{Q}_2$, 
like in the  cases  where $\Om=V=0$ and $\Om=0$, $V=|x|^2$ in \cite{Me93} and \cite{Car02c} respectively, via the 
$\mathcal{R}$-transform, a pseudo-conformal type transform,  we  
were able to determine the profile for all blowup solutions with minimal mass at the ground state level.
Hence all such blowup solutions in $\sH^1$ have blowup rate $(T_*-t)\inv$, which is however unstable, see \cite[Proposition 4.5]{BHZ19a}. 
When $p=1+{4}/{n}$ and $\norm{u_0}_2$ is slightly greater than $\norm{Q}_2$, 
N.B., Y.H. and S.Z.  proved  in \cite{BHZ19a} the $\log$-$\log$ law for RNLS (\ref{e:nls-H-rot}), 
i.e., there exists a universal constant $\al^*:=\al_n^*>0$ such that 
if $ \int |Q|^2 < \int |u_0|^2  < \int |Q|^2 + \alpha^*$ 
with negative energy  $E_{0,0}(u_0)<0$, 
then  $u\in C([0, T_*); \mathscr{H}^1)$  is a blowup solution to \eqref{e:nls-H-rot} on its lifespan $[0, T_*)$ satisfying
 \begin{align}\label{log-log_RNLS}
	\Vert \nabla u(t,\cdot) \Vert_2
	= \frac{\Vert\nabla Q\Vert_2 }{\sqrt{2 \pi}}\sqrt{ \frac{\log \left| \log (T_*- t)  \right|}{T_*-t} }+o(1) ,\quad as\; t\to T_*
	\end{align}
 where $Q$ is the unique positive radial solution of \eqref{e:grstQ-masscri}. The analogue for the standard NLS
was initially proven in Merle and Rapha\"el \cite{MR05} under the hypothesis of certain spectral property conjecture.  
We can now state the $\log$-$\log$ law  (\ref{log-log_RNLS}) for RNLS \eqref{e:nls-H-rot} 
when $n\le 12$, in light of the recent numerical verification of the spectral property conjecture in \cite{YangRouZ18}.  
Such a blowup rate is also known to be stable in $\sH^1$. 
An example of  the initial data is given in the form $u_0 =\eta Q$ with $1 < \eta<\sqrt{1+ \frac{\alpha^*}{\| Q \|_2^2}}\,$, which can be easily  verified to satisfy  the condition above by the Pohozaev identity \eqref{e:pohoz-Qlam}.

 In  the second component of this paper, in Section \ref{s:Virial-inhom}  we further consider the threshold problem for the inhomogeneous RNLS (\ref{e:psi-VomLz}) in $\R^{1+n}$,
\begin{equation}\label{e:psi-VomLz}
	iu_t = -\frac12\Delta u +\frac{\gamma^2}{2} |x|^2 u - \lam(x) |u|^{p-1} u -\Om L_z u\,, \quad u_0\in \Sigma=\sH^1
	\end{equation}
where $p\in [1,1+4/(n-2))$ and 
$\lam=\lam(x)$ satisfies conditions (i)-(iii) in Hypothesis \ref{mu-hypo}, namely, 
$\lam$ is radial, non-increasing and $\lam_{min}\le\lam(x)\le \lam_{max}$ for all $x$, where 
$\lam_{min}\ge 0$, $\lam_{max}>0$ are the infimum and supremum of $\lam$. 
In  physics, 
an inhomogeneous nonlinearity of a NLS 
  stands for either a correction to the nonlinear power-law response, or  some inhomogeneity in the medium \cite{Berge00,deBFu05}. 
For $p=1+4/n$, the minimal mass blowup problem for inhomogeneous NLS with a potential $V$ 
and $\Om=0$ was  considered in \cite{BaCaDuy10}. 
In the setting of a rotational NLS, we investigate the minimal mass problem for  (\ref{e:psi-VomLz}) and obtain 
 the following  analogue of Theorem \ref{t:gwp-blowup-Om}. 
\begin{theorem}\label{sharpQ:inhom} 
Let $p=1+\frac{4}{n}$ and $(\Om,\ga)$ be any pair in $\R\times \R_+$. Let $\lam$ satisfy Hypothesis \ref{mu-hypo}.
Suppose the initial data $ u_0 \in \Sigma$.   
Then the following holds true for (\ref{e:psi-VomLz}). 
\begin{enumerate}
\item[(a)] If $ \norm{u_0}_{2} <c_0:=\norm{ Q_{\lambda_{max}} }_2$, then the solution $u\in C(\R,\Sigma)$ exists globally in time.  
\item[(b)]  The number $c_0$ in (a) is sharp in the sense that for any $c>\norm{ Q_{\lambda_{max}}}_2$,
there exists $u_0$ in $\Sigma $ satisfying  
  $\norm{u_0}_2 =c$, 
such that  the solution $u\in C(I,\Sigma)$   blows up in  $I=[0,  T_{max})$ for some $T_{max}>0$,  
namely,  $\norm{\nabla u(t)}_2\to \iy$ as $t\to T_{max}$. 
\end{enumerate}\end{theorem}  
Motivated from the line of proof for (\ref{e:nls-H-rot}) presented in Sections 3-5, 
   we first settle the sharp constant $\td{c}_{GN}$ in (\ref{GN:lam(x)}), a Gagliardo-Nirenberg type inequality in an inhomogeneous version,
   by studying the minimization of the $\td{J}$-functional (\ref{J_sn:inhomo}).
   It turns out that the constant $\td{c}_{GN}=\frac{2 n}{n+2}\Vert Q_{\lam_{max}}\Vert_2^{4/n}$
   plays an important role in the proof of 
   Theorem \ref{sharpQ:inhom}. Then, we   prove  a 
 virial identity in Lemma \ref{U:VOm-inhom}. An alternative proof of this lemma could be performed through some method from a hydrodynamical system 
 \cite{Af06,AMS12}.  
Thus, Lemma \ref{U:VOm-inhom} allows us to 
  prove Lemma \ref{J-inhomRNLS},  a criterion of blowup for (\ref{e:psi-VomLz}).   
Here we encounter the difficulty where the solution of the o.d.e. \eqref{J''+J-inhom} for $J(t)$ is not solvable. 
We circumvent this obstacle by writing the unknown $J(t)$ in an implicit integral equation so that we are able to obtain 
some good estimate for $J(t)$ as shown in (\ref{J<Csin2gamma}), which leads to 
establishing  Lemma \ref{J-inhomRNLS}  for the inhomogeneous RNLS.  

We wish to mention that the question of determining the minimal mass blowup solutions 
can be subtle and delicate. In our setting, at first, the ``natural''  threshold seems to be $\norm{\td{Q}}_2$, where $\td{Q}$  
is a solution of 
\begin{align}\label{tQ:inhomo}  
-\frac12\Delta \wtd{Q} - \lambda(x) |\wtd{Q}|^{\frac{4}{n}}\wtd{Q} =-\wtd{Q}\,,
\end{align}
when viewing the identity \eqref{eE:lam(x)} and the condition $\td{E}_{0,0}(u_0)\le 0$ that is required in Lemma \ref{J-inhomRNLS}.
However, a minimizing sequence argument for the $\td{J}$-functional as given in Lemma \ref{j_lam_max} 
and an application of Lemma \ref{J-inhomRNLS} enable us to determine the sharp threshold
in Theorem \ref{sharpQ:inhom}, where we notice the mass concentration at the maximum point  for $\lam(x)$ when
constructing a blowup solution, 
see  Remarks \ref{re:Q} and \ref{rk:gap-Qmax-min}. 
One application of Theorem \ref{sharpQ:inhom} for equations of the inhomogeneous type (\ref{e:psi-VomLz}) is the interesting connections with 
NLS on a manifold, see Remark \ref{r:geometry} for more details. 

In the third component of this paper, in Sections \ref{s:grst-exist-Energy} to \ref{s:orb-stab-u}, we consider the stability of {\em ground state solutions}  for (\ref{e:nls-H-rot}). 
A {\em ground state solution} (g.s.s.) at mass level $c>0$ is a minimizer of the energy functional with constant mass constraint that is defined 
by  
Definition \ref{def:Ic-Q_OmV}. 
For the usual focusing NLS
\begin{align}\label{E:nls-Up}
	i u_t = -\frac12\Delta u  - \lam |u|^{p-1} u ,  \quad   u(0) = u_0 \in {H}^1,
	\end{align}
	 the ground state $Q$ is unique and orbitally stable if $p<1+4/n$, and unstable if $p\ge 1+4/n$. 
In Section \ref{s:grst-exist-Energy},  in the slow rotating setting $|\Om|<\ga$,
we show the existence of  g.s.s. for the RNLS (\ref{e:nls-G-Omga}) with more general class of  nonlinearities. 
Our construction of the g.s.s relies on the concentration compactness method in \cite{CazLion82} and \cite{CarHa15,HiStu04}. 
Then in Section  \ref{s:orb-stab-u},  we prove the orbital stability  for  \eqref{e:nls-G-Omga}  via standard argument. 
  Theorem \ref{t:orb-stab-Zc} shows that  $Z_c$, the set of 
g.s.s. for \eqref{e:nls-G-Omga} is orbitally stable if either $p<1+4/n$, 
or  $p=1+4/n$ and the mass level is below that of $Q_{\lam,1}$ for some optimal constant $\lam$
in the focusing case. 
 \begin{theorem}\label{t:stabi-masscr3} Let $p=1+\frac4n$, $\lam>0$ and $|\Om|<\ga$.   
Let $Q$ be the unique ground state of \eqref{e:grstQ-masscri}. 
Suppose $c<{\norm{Q}_2}$. Then 
 the set of minimizers $Z_c$ is orbitally stable for (\ref{e:nls-H-rot}). 
\end{theorem}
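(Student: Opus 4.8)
The plan is to run the Cazenave--Lions variational argument \cite{CazLion82,HiStu04}, exploiting the confining geometry of \eqref{e:nls-H-rot}. Write $I_c$ and $Z_c$ for the infimum and the set of minimizers of $E_{\Om,V}$ over $\{u\in\sH^1:\norm{u}_2=c\}$ as in Definition~\ref{def:Ic-Q_OmV}. The starting point is the gauge identity
\begin{align*}
E_{\Om,V}(u) &= \tfrac12\norm{(\nabla-iA)u}_2^2 + \tfrac12\int\big[(\ga^2-\Om^2)(x_1^2+x_2^2)+\ga^2(x_3^2+\dots+x_n^2)\big]|u|^2 \\
&\qquad - \tfrac{\lam n}{n+2}\norm{u}_{p+1}^{p+1},
\end{align*}
valid at $p=1+4/n$, obtained by absorbing $-\Om\int\bar u L_zu$ into $\norm{(\nabla-iA)u}_2^2$ at the cost of $-\tfrac12\norm{|A|u}_2^2$ with $|A|^2=\Om^2(x_1^2+x_2^2)$. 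When $|\Om|<\ga$ the quadratic part is $\ge\tfrac12(\ga^2-\Om^2)\norm{|x|u}_2^2\ge0$, so that $\mathcal{Q}(u):=E_{\Om,V}(u)+\tfrac{\lam n}{n+2}\norm{u}_{p+1}^{p+1}$ together with $\norm{u}_2^2$ defines a Hilbert norm on $\sH^1$ equivalent to the intrinsic one.

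Next I would establish coercivity and compactness of minimizing sequences. By the diamagnetic inequality $\norm{\nabla|u|}_2\le\norm{(\nabla-iA)u}_2$ and the sharp mass-critical Gagliardo--Nirenberg inequality, whose optimizer is the ground state $Q$ of \eqref{e:grstQ-masscri} (so that $\tfrac{\lam n}{n+2}\norm{f}_{p+1}^{p+1}\le\tfrac12\norm{\nabla f}_2^2(\norm{f}_2/\norm{Q}_2)^{4/n}$ with equality at $Q$), the nonlinear term absorbs only the fraction $\theta:=(c/\norm{Q}_2)^{4/n}<1$ of $\tfrac12\norm{(\nabla-iA)u}_2^2$ on the mass-$c$ sphere; hence $E_{\Om,V}(u)\ge(1-\theta)\tfrac12\norm{(\nabla-iA)u}_2^2+\tfrac12(\ga^2-\Om^2)\norm{|x|u}_2^2\ge0$, so $I_c\in[0,\infty)$ and every minimizing sequence is bounded in $\sH^1$. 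Because the harmonic trap forces the associated kinetic--magnetic operator to have compact resolvent, one has $\sH^1\hookrightarrow\hookrightarrow L^q(\R^n)$ for $2\le q<\tfrac{2n}{n-2}$ (with $q<\infty$ unrestricted when $n=2$), which rules out the vanishing and dichotomy alternatives of concentration compactness. Thus a minimizing sequence $u_k$ has a subsequence with $u_k\rightharpoonup u_*$ in $\sH^1$, $u_k\to u_*$ in $L^2$ and in $L^{p+1}$; strong $L^2$-convergence yields $\norm{u_*}_2=c$, and weak lower semicontinuity of $\mathcal{Q}$ gives $E_{\Om,V}(u_*)\le I_c$, so $u_*\in Z_c$ (in particular $Z_c\ne\emptyset$, cf. Section~\ref{s:grst-exist-Energy}) and $E_{\Om,V}(u_k)\to E_{\Om,V}(u_*)$. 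Combined with $\norm{u_k}_{p+1}\to\norm{u_*}_{p+1}$ this forces $\mathcal{Q}(u_k)+\norm{u_k}_2^2\to\mathcal{Q}(u_*)+\norm{u_*}_2^2$; convergence of this equivalent Hilbert norm together with weak convergence upgrades $u_k\to u_*$ to strong convergence in $\sH^1$. Hence \emph{every} minimizing sequence is relatively compact in $\sH^1$ with limit points in $Z_c$.

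The orbital stability of $Z_c$ then follows by the usual contradiction argument. If it failed, there would be $\eps_0>0$, global solutions $\psi_k$ of \eqref{e:nls-H-rot} (global by Theorem~\ref{t:gwp-blowup-Om}(a), since $\norm{\psi_k(0)}_2<\norm{Q}_2$ for $k$ large) with $\operatorname{dist}_{\sH^1}(\psi_k(0),Z_c)\to0$, and, by continuity of the flow in $\sH^1$, first times $t_k$ with $\operatorname{dist}_{\sH^1}(\psi_k(t_k),Z_c)=\eps_0$. Conservation of mass and energy give $\norm{\psi_k(t_k)}_2=\norm{\psi_k(0)}_2\to c$ and $E_{\Om,V}(\psi_k(t_k))=E_{\Om,V}(\psi_k(0))\to I_c$ (using continuity of $E_{\Om,V}$ on $\sH^1$ and $E_{\Om,V}\equiv I_c$ on $Z_c$). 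Thus, after the $o(1)$ rescaling $\psi_k(t_k)\mapsto(c/\norm{\psi_k(t_k)}_2)\psi_k(t_k)$ onto the mass-$c$ sphere, $\{\psi_k(t_k)\}$ is a minimizing sequence, so by the previous paragraph a subsequence converges in $\sH^1$ to some $v\in Z_c$, contradicting $\operatorname{dist}_{\sH^1}(\psi_k(t_k),Z_c)=\eps_0>0$.

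The main obstacle is the coercivity estimate in the second paragraph: this is the only place the two hypotheses are used, and both are essential. The condition $|\Om|<\ga$ is exactly what makes the gauged quadratic form of $E_{\Om,V}$ nonnegative (letting the harmonic potential dominate $|A|^2$), and $c<\norm{Q}_2$ is what leaves a positive fraction $1-\theta$ of kinetic energy after applying the sharp Gagliardo--Nirenberg inequality; at $c=\norm{Q}_2$ the estimate degenerates and, consistently with Theorem~\ref{t:gwp-blowup-Om}(b), stability is lost. The two remaining technical points---the compact embedding $\sH^1\hookrightarrow\hookrightarrow L^q$, proved by a Rellich argument with tightness supplied by the weight $\la x\ra u\in L^2$, and the weak lower semicontinuity of the angular-momentum term, handled by carrying it inside $\norm{(\nabla-iA)u}_2^2$---are routine, as are the continuity of the solution map in $\sH^1$ and the mass normalization in the last paragraph, given Proposition~\ref{pr:gwp-lwpUav}.
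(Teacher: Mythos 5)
Your proposal is correct and follows essentially the same route as the paper: coercivity of $E_{\Om,V}$ on the mass sphere via the diamagnetic plus sharp Gagliardo--Nirenberg inequality (using $|\Om|<\ga$ and $c<\norm{Q}_2$), compactness of minimizing sequences through the compact embedding of $\Sigma$ into $L^q$, and the standard Cazenave--Lions contradiction argument with mass/energy conservation, which is exactly how Theorems \ref{th:grstQ-masscri} and \ref{t:orb-stab-Zc} are proved. The only cosmetic difference is your explicit renormalization of $\psi_k(t_k)$ onto the mass-$c$ sphere, which the paper handles implicitly by treating $\{z_n\}$ with $\norm{z_n}_2^2\to c^2$ as a minimizing sequence.
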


 This result  is a special case of  part (b) in Theorem \ref{t:orb-stab-Zc}, which 
 suggests  that the ``minimal mass'' for the blowup is the same threshold for orbital stability problem under the condition 
 $|\Om|<\ga$.   
When $p=1+4/n$, 
  the energy functional $E_{\Om,\ga}(u)$ for (\ref{e:nls-H-rot}) is unbounded from below on the mass level set $S_{c_0}$ with 
 $ c_0=\norm{Q}_2$, thus, an absolute minimum of the energy does not exist on $S_{c_0}$, see \cite{GuoLY19,GuoSeir14}.
  This also indicates  the existence of the threshold we have obtained in Theorem \ref{t:gwp-blowup-Om} and Theorem \ref{t:stabi-masscr3}. 
Concerning the case $p<1+4/n$,  
 the orbital stability  was proven in   \cite{CazE88,EL89}  
if $|\Om|=\ga$, and recently in \cite{ANenS18} if $|\Om|<\ga$, $n=2, 3$.

Our method allows us to prove the mass-critical case by applying the diamagnetic inequality 
	(\ref{deU<grad-iAu}) along with the $\Sigma$-norm equivalence (\ref{(3.7)}), which are the main novel technical ingredients in the proof.  Moreover, using these techniques and the construction method in \cite{HiStu04}
	we show the existence of stable ground states in 
 Theorem \ref{th:grstQ-masscri} and Theorem \ref{t:orb-stab-Zc} for RNLS (\ref{e:nls-G-Omga}) with a general class of nonlinearities that include  combined inhomogeneous  power terms.  
 The  frequency condition $|\Om|\le \ga$ is  critical  to guarantee stability. 
Physically, if  the angular velocity of rotation exceeds  
 the trapping frequency,  that is, $ |\Om|>\ga$,
 then   
  $V$ cannot provide the necessary centripetal force (that counteract the centrifugal force caused by the rotation), 
and the gas may fly apart.  The counterexample we show in  Section \ref{s:orb-stab-u} agrees with physics observation. 
For attractive particle interactions, 
  it 
 may lead to centrifugal forces destabilizing all rotating states. 
 
\section{Preliminaries}\label{s:prelimin} 
  One can write 
   \eqref{e:nls-H-rot} in the  form of mNLS
\begin{align}
&i u_t =-\frac12(\nabla-i A)^2u  + V_{e}u -\lam |u|^{p-1} u,\label{e:ut-L-p_AV}
\end{align}
where $V_e$ 
denotes the effective electric potential for the Hamiltonian operator
$H_{A,V}=-\frac12(\nabla-i A)^2  + V_e$. 
In particular, 
if $V=\frac{\ga^2}{2}|x|^2$,  
\begin{align}\label{A:Om-x}
&A=\Om\la  -x_2 ,x_1,0\dots,0\ra,
\end{align} 
then 
$V_{e}=
\frac12(\ga^2-\Om^2)(x_1^2+x_2^2)+\frac{\ga^2}{2}x_3^2+\cdots+\frac{\ga^2}{2}x_n^2$  
and 
$iA\cdot\nabla=-\Om L_z=L_\Om$. 
Define
\begin{align}
H_{\Omega, V}:= -\frac12\Delta + V - \Omega  L_z \,. \label{HomV}
\end{align}
 Then  
 $H_{\Om,V}=H_{A,V}$ is essentially self-adjoint in $L^2(\R^n)$. 
In $\R^3$, the operator $L_\Om$ generates a rotation in the sense that: If $(r,\theta,z)$ is the cylindrical coordinate, then 
 $e^{-itL_\Om}f(r,\theta,z)= f(r, \theta+t\Om ,z)$.

\subsection{Wellposedness for RNLS} 
Let us review some results on l.w.p and g.w.p. for the Cauchy theory for 
\eqref{e:nls-H-rot} in the energy subcritical regime.   
   For  $1\le p<1+4/(n-2)$, i.e.,  the $\sH^1$-subcritical case, the local existence and uniqueness  for \eqref{e:nls-H-rot} follow from those of the magnetic NLS \eqref{e:ut-L-p_AV},   based on the fundamental solution constructed in \cite{Ya91}, see \cite{deB91,Mi08} and \cite{AMS12}. 
   The $\sH^s$-subcritical result for \eqref{e:ut-L-p_AV} was considered in \cite{Z12a}  for $1\le p<1+4/(n-2s)$ if $V_e(x)$ 
  is subquadratic and bounded from below. 
  The local wellposedness  for $V_e(x)=-\sum_j \td{\ga}_j^2 x_j^2$
 follows from a Strichartz estimate in Lemma \ref{l:disp-Stri-L}, 
 see Proposition \ref{pr:gwp-lwpUav}.
In the mass-subcritical case $p<1+4/n$ with any data in $L^2$ and in the mass-critical case $p=1+4/n$ with small data in $L^2$,   
the global in time solution exists and is unique \cite{GaZ13a, Z12a}. 

\setcounter{tocdepth}{2} 
\setcounter{subsection}{0}

\begin{proposition}\label{pr:gwp-lwpUav}
Let $1\le p< 1+\frac{4}{n-2}$. Let $u_0\in \sH^1$, $r=p+1$ and  $q=\frac{2(p+1)}{(p-1)}$. 
\begin{enumerate}
\item[(a)] (local existence) There exists  a maximal time interval $I=(-T_{min},T_{max})$, 
 $T_{max}$,  $T_{min}>0$ 
such that \eqref{e:nls-H-rot} has a unique solution $u\in C( I,\sH^1)\cap L^q( I, \sH^{1,r})$.  
\item[(b)] (global existence)  There exists  a unique, $\sH^1$-bounded  global solution in 
$C(\R,\sH^1)\cap L^q_{loc}(\R,\sH^{1,r})$ if one of the following conditions is satisfied: 
\begin{enumerate}
\item[(i)] $1<p<1+\frac{4}{n-2}$\,, $\lam<0$ (defocusing), 
\item[(ii)]  $1\le p<  1+\frac{4}{n}$\,, $\lam>0$ (focusing), 
\item[(iii)] $1+\frac{4}{n}\le p<1+\frac{4}{n-2}$\,, $\lam>0$ (focusing) and  $\Vert u_0\Vert_{\sH^1}< \veps$ for some $\veps=\veps(\lam,n)$ sufficiently small.  
\end{enumerate}
\item[(c)]  If $T_{max}$ (respectively $T_{min}$) is finite, then $\norm{\nabla u(t)}_2\to \iy$ as $t\to T_{max}$
(respectively $ -T_{min}$). 
\item[(d)] On the lifespan interval $(-T_{min},T_{max})$,  the following quantities are conserved in time: 
\begin{align}  
&(mass)\quad M(u)=\int |u|^2 \label{e:mass-OmV}  \quad  \\
&(energy)\quad E_{\Om,V}(u)= \int \left(\frac12 |\nabla u|^2+V|u|^2-\frac{2\lam}{p+1}|u|^{p+1}\right)
+\la L_\Om u, u \ra\label{e:Eu-Vom}
\end{align} 
\item[(e)]\label{i:Lom-conserv} The angular momentum $\ell_\Om(u):=\la L_\Om u,u\ra $ is real-valued and 
\begin{equation} 
\la L_\Om u,u\ra= -\Om\int \bar{u} L_z u \,, \label{e:conservLom-momen}
\end{equation}
\end{enumerate}
where the inner product is defined as 
$\displaystyle	\langle f, g \rangle:= \int f \overline{g}$. 
\end{proposition}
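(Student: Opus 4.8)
The plan is to establish Proposition \ref{pr:gwp-lwpUav} by transporting the whole Cauchy theory for the magnetic NLS \eqref{e:ut-L-p_AV} to \eqref{e:nls-H-rot} via the identity $H_{\Om,V}=H_{A,V}$ established in the Preliminaries, then supplementing with the conservation laws, which are proved by direct energy-method computations. First I would treat part (a): the magnetic Hamiltonian $H_{A,V}$ with $V_e$ subquadratic and bounded below generates a unitary group $e^{-itH_{A,V}}$ on $L^2$, and the key input is the dispersive/Strichartz estimate in Lemma \ref{l:disp-Stri-L} for this propagator on the weighted space $\sH^{1,r}$. Using the admissible pair $(q,r)=\bigl(\frac{2(p+1)}{p-1},p+1\bigr)$, I would run a standard contraction-mapping argument on $C(I,\sH^1)\cap L^q(I,\sH^{1,r})$ for the Duhamel formulation $u(t)=e^{-itH_{A,V}}u_0 - i\lam\int_0^t e^{-i(t-s)H_{A,V}}(|u|^{p-1}u)(s)\,ds$; the nonlinearity $|u|^{p-1}u$ maps $\sH^{1,r}$ into $\sH^{1,r'}$ boundedly in the $\sH^1$-subcritical range $p<1+\tfrac4{n-2}$, and a short time interval makes the map a contraction. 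This yields a maximal interval $(-T_{min},T_{max})$ and uniqueness.

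For part (c), the blowup alternative, I would argue by contradiction: if $T_{max}<\infty$ but $\liminf_{t\to T_{max}}\norm{\nabla u(t)}_2<\infty$, then along a sequence $u(t_k)$ is bounded in $\sH^1$ (the weighted part $\norm{\la x\ra u}_2$ is controlled since $\la L_\Om u,u\ra$ and $V$ appear in the conserved energy together with mass), and one can restart the local existence argument from $t_k$ with a uniform existence time, extending past $T_{max}$ — a contradiction. Parts (d) and (e) I would obtain by the usual regularization argument: first verify the formal computations $\frac{d}{dt}M(u)=0$ and $\frac{d}{dt}E_{\Om,V}(u)=0$ for smooth decaying solutions by pairing the equation with $\bar u$ and with $\bar u_t$ respectively — here the self-adjointness of $H_{\Om,V}=-\tfrac12\Delta+V-\Om L_z$ on the appropriate domain makes $\operatorname{Im}\la H_{\Om,V}u,u\ra=0$ and $\operatorname{Re}\la H_{\Om,V}u,u_t\ra=\tfrac12\frac{d}{dt}\la H_{\Om,V}u,u\ra$ — then pass to the limit using the $\sH^1$-continuity from (a). For (e), that $\ell_\Om(u)$ is real reduces to the symmetry of $L_z=i(x_2\partial_{x_1}-x_1\partial_{x_2})$, which is formally skew-adjoint on $\partial$ but becomes self-adjoint after the factor $i$; an integration by parts gives $\la L_z u,u\ra=\overline{\la L_z u,u\ra}$, and \eqref{e:conservLom-momen} is just the definition $L_\Om=-\Om L_z$.

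Finally part (b): global existence is a matter of ruling out $T_{max}<\infty$ via (c), i.e. an a priori bound on $\norm{\nabla u(t)}_2$. In case (i), $\lam<0$, the energy $E_{\Om,V}$ is coercive after absorbing $\la L_\Om u,u\ra$ using $|\la L_\Om u,u\ra|\le \veps\norm{\nabla u}_2^2 + C_\veps\norm{\la x\ra u}_2^2$ (valid for any $\veps$, since $L_\Om$ is first order and borderline-controlled by $\nabla$ and multiplication by $x$), so mass and energy conservation bound the $\sH^1$-norm directly. In case (ii), $p<1+\tfrac4n$, I would estimate the nonlinear term by the Gagliardo–Nirenberg inequality with a subcritical power of $\norm{\nabla u}_2$, so that energy conservation still yields an a priori $\sH^1$-bound (again after the $\veps$-absorption of the angular-momentum term). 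In case (iii), the mass-critical or supercritical power with small $\norm{u_0}_2<\veps$, the sharp Gagliardo–Nirenberg constant makes the nonlinear energy term absorbable into $\tfrac12\norm{\nabla u}_2^2$ precisely when the mass is small, closing the bootstrap. The main obstacle, and the place requiring genuine care rather than routine bookkeeping, is handling the weighted component $\la x\ra u\in L^2$ throughout: the local theory, the blowup alternative, and every a priori estimate must control $\norm{\la x\ra u}_2$ simultaneously with $\norm{\nabla u}_2$, which is why the energy $E_{\Om,V}$ naturally pairs $V|u|^2$ with $\la L_\Om u,u\ra$ and why one wants the Strichartz estimates on the weighted spaces $\sH^{1,r}$ from Lemma \ref{l:disp-Stri-L} rather than on plain Sobolev spaces.
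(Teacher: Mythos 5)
Your treatment of (a), (c) and (d) is essentially the paper's own: Duhamel plus the Yajima-based Strichartz estimates of Lemma \ref{l:disp-Stri-L} on the weighted spaces $\sH^{1,r}$, a contraction argument, the standard approximation argument for the conservation laws, and (for (c)) recovery of $\norm{xu}_2$ from the conserved energy once $\norm{\nabla u}_2$ is bounded. The genuine gap is in (e) and, as a consequence, in (b). The paper does not read \eqref{e:conservLom-momen} as a definitional identity: Section \ref{ss:conserv-angularOm} is devoted to proving that $\ell_\Om(u(t))$ is \emph{conserved in time}, which requires the structure of the equation ($[\Delta,L_z]=0$, rotational invariance of $V$ so that $(x_2\pa_{x_1}-x_1\pa_{x_2})V=0$, and a skew-symmetry argument for the power nonlinearity). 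Your proposal never proves this, yet it is precisely the content that Lemma \ref{l:J-J'-blup} and Theorem \ref{t:gwp-blowup-Om} use when they replace $\ell_\Om(u(t))$ by $\ell_\Om(u_0)$.

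This omission then breaks your proof of (b). You control the rotation term by $|\la L_\Om u,u\ra|\le \veps\norm{\nabla u}_2^2+C_\veps\norm{xu}_2^2$ with $C_\veps=|\Om|^2/(4\veps)$; to keep both the kinetic and the potential parts of $E_{\Om,V}$ coercive after absorption you need simultaneously $\veps<\tfrac12$ and $C_\veps<\tfrac{\ga^2}{2}$, i.e. $|\Om|<\ga$. The proposition (and Theorem \ref{t:gwp-blowup-Om}) are asserted for every pair $(\Om,\ga)\in\R\times\R_+$, and in the fast-rotation regime $|\Om|\ge\ga$ the quadratic form $\la H_{\Om,V}u,u\ra$ is not coercive on $\sH^1$ (compare the vortex functions in Section 8), so your a priori bound simply fails there. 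The paper avoids this by combining (d) with the conservation of angular momentum: since $\ell_\Om(u(t))=\ell_\Om(u_0)$, the $\Om$-independent quantity $E_{\Om,V}(u)-\ell_\Om(u)=\tfrac12\norm{\nabla u}_2^2+\int V|u|^2-\tfrac{2\lam}{p+1}\norm{u}_{p+1}^{p+1}$ is conserved, and global bounds follow from it (defocusing: directly; focusing: via Gagliardo--Nirenberg), exactly as in the proof of Theorem \ref{t:gwp-blowup-Om}(a). Finally, in (b)(iii) your absorption ``closes the bootstrap'' only at the endpoint $p=1+4/n$: for $p>1+4/n$ the Gagliardo--Nirenberg power of $\norm{\nabla u}_2$ is $n(p-1)/2>2$, so smallness of the mass alone does not let you absorb the nonlinear term into $\tfrac12\norm{\nabla u}_2^2$, and that part of your argument does not go through as written.
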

The continuity and conservation laws for the solution map $u_0\mapsto u$ in $\sH^1$ follow from a standard argument using the Duhamel formula \cite{
deB91}. 
The above results extend to l.w.p. and g.w.p. in $\sH^k$ for \eqref{e:nls-H-rot} through a similar proof in view of 
Lemma \ref{l:disp-Stri-L}. 

\subsection{Strichartz estimates for $e^{-itH_{\Om,V}}$} 
The local in time result for \eqref{e:ut-L-p_AV} requires dispersive and Strichartz estimates for the time-dependent propagator $U(t)=e^{-itH_{A,V}}$.
Yajima \cite{Ya91} combines the oscillatory integral operators, bicharacteristics and  integral equation method 
developed in Fujiwara  and Kitada et al's work  \cite{Fuji79,KitaKu81} 
to obtain  
\begin{align*}
 &U(t)f(x)
 =(2\pi it )^{-n/2}\int e^{iS(t,x,y)}a(t,x,y)f(y)dy\, ,\end{align*} 
and it follows that 
\begin{align} 
 & |U(t,x,y)|\le \frac{c_n}{|t|^{n/2}}\,,\qquad \forall \ 0<|t|<\de \label{e:U-ker-timedec} 
 \end{align} 
for some positive constant $\de$,
where $U(t,x,y)$ represents the kernel of $U(t)$ and $S(t,x,y)$ and $a(t,x,y)$ are smooth bounded functions in $((-\de,\de)\setminus\{0\}) \times \R^{2n}$. 
Thus the dispersive estimate holds for any $0<|t|<\de$, 
	\begin{align}
	\Vert U(t)f \Vert_{L^\infty} \lesssim \frac{1}{|t|^{n/2}} \Vert f \Vert_{L^1}\, ,\label{e:L1-infty-disp}
	\end{align} 
which leads to Strichartz estimates \eqref{e:U(t)f-qr-H1} and \eqref{e:stri-Hsqr-L}, 
and hence the local existence on ($-\td{\de},\td{\de}$) by standard arguments,  
 see  \cite{deB91,KTao98,Mi08,Z12a}. 

\begin{definition}\label{de:adm.pair-qr} We call $(q,r)$ an admissible pair if $q,r\in [2,\iy]$ satisfy  $(q,r,n)\neq (2,\iy,2)$ and
\begin{align*} 
\frac{2}{q}+\frac{n}{r}=\frac{n}{2}.
\end{align*}   
\end{definition}

By Duhamel formula, $u$ is a  weak solution of  
\eqref{e:nls-H-rot} 
is equivalent to 
 \[ u=U(t)u_0+i \int_0^t U(t-s) \lam |u|^{p-1} u ds.
\]
From \eqref{e:L1-infty-disp} and \cite[Lemma 3.1]{Ya91} 
we  have  the following Strichartz estimates in 
 weighted Sobolev spaces.
\begin{lemma}\label{l:disp-Stri-L} Let $I=[-T,T]$, $T<\de$ small. 
Let $(q,r)$ and $(\td{q},\td{r})$ be any admissible pairs. Then
\begin{align}
& \Vert  U(t)f \Vert_{L^q(I,L^r) } \le C \Vert  f\Vert_{L^2(\mathbb{R}^n)} ,\notag\\
&\Vert \int_0^t  U(t-s)F ds\Vert_{L^q( I,L^r)}\le C_{n,q,\td{q}}
  \Vert F \Vert_{L^{\td{q}'}(I, L^{\td{r}'})} ,\notag\\
& \Vert  U(t)f \Vert_{L^q(I,\sH^{1,r}) } \le  C\Vert  f\Vert_{ \sH^1},\label{e:U(t)f-qr-H1}\\
&\Vert \int_0^t  U(t-s)F ds \Vert_{L^q(I, \sH^{1,r})}
\le C_{n,q,\td{q}}
  \Vert F \Vert_{L^{\td{q}'}(I, \sH^{1,\td{r}'})},\label{e:stri-Hsqr-L}
\end{align} 
where  
\begin{align*}
& \Vert u \Vert_{L^q(I, L^r)}= \left(\int_I \big(\int |u(t,x)|^r dx\big)^{q/r} dt \right)^{1/q},\\
& \Vert u \Vert_{L^q(I, \sH^{1,r})}= \left(\int_I \norm{ u(t,\cdot)}_{\sH^{1,r}}^q dt \right)^{1/q},\\
&\norm{u}_{\sH^{1,r}}:=\norm{ \nabla u}_{L^r}+\norm{ x u}_{L^r}+\norm{u}_{L^r}.
\end{align*} 
\end{lemma}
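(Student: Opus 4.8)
The plan is to derive all four inequalities from two ingredients already recorded above: the dispersive bound \eqref{e:L1-infty-disp} and the unitarity of $U(t)=e^{-itH_{A,V}}$ on $L^2$, so that $\Vert U(t)f\Vert_{L^2}=\Vert f\Vert_{L^2}$. First I would interpolate these two endpoints to get, for $2\le r\le \iy$ and $|t|<\de$,
\[
\Vert U(t)f\Vert_{L^r}\lesssim |t|^{-n(\frac12-\frac1r)}\Vert f\Vert_{L^{r'}}.
\]
Together with the $L^2$-isometry property, this is exactly the input required by the abstract Strichartz estimate of Keel and Tao \cite{KTao98}; applying it on the finite interval $I=[-T,T]$ yields the homogeneous estimate $\Vert U(t)f\Vert_{L^q(I,L^r)}\le C\Vert f\Vert_{L^2}$ for every admissible pair, the excluded endpoint $(2,\iy,2)$ being precisely the one ruled out in Definition \ref{de:adm.pair-qr}. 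For the retarded inhomogeneous estimate I would either quote the full Keel--Tao statement or combine the non-retarded version with the Christ--Kiselev lemma to reinstate the $\int_0^t$ truncation; this gives the bound for all admissible $(q,r)$ and $(\td q,\td r)$.

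For the weighted estimates \eqref{e:U(t)f-qr-H1} and \eqref{e:stri-Hsqr-L} I would use the fine structure of Yajima's parametrix. By \cite{Ya91} and \cite[Lemma~3.1]{Ya91}, $U(t)$ is an oscillatory integral operator whose amplitude has bounded derivatives uniformly for $|t|<\de$, and the conjugated operators $U(t)^{-1}x_j U(t)$ and $U(t)^{-1}(-i\pa_j) U(t)$ are, respectively, $x_j$ and $-i\pa_j$ plus operators that are bounded on $L^r$ with norm $O(1)$ uniformly in $|t|<\de$, modulo lower-order pieces carrying a factor of $t$. Consequently
\[
\Vert U(t)f\Vert_{\sH^{1,r}}\lesssim \Vert U(t)(\nabla f)\Vert_{L^r}+\Vert U(t)(x f)\Vert_{L^r}+\Vert U(t)f\Vert_{L^r},
\]
and \eqref{e:U(t)f-qr-H1} follows by applying the homogeneous estimate from the previous paragraph to each of the three functions $\nabla f$, $xf$, $f$, all of which lie in $L^2$ when $f\in\sH^1$. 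Likewise \eqref{e:stri-Hsqr-L} is obtained by commuting $\nabla$, $x$ and the identity through $U(t-s)$ inside the Duhamel integral and then invoking the unweighted inhomogeneous estimate.

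I expect the only non-routine step to be the last one: showing that commuting $x$ and $\nabla$ past $U(t)$ does not take us outside the class of operators bounded on $L^r$ uniformly for small $|t|$, which relies on the symbol calculus for the Yajima parametrix — the commutator identities for oscillatory integral operators — rather than on any soft argument. The first two steps are essentially a black box once \eqref{e:L1-infty-disp} is in hand, and the whole argument is purely local in time ($T<\de$), which is all that is needed to drive the local wellposedness of \eqref{e:nls-H-rot} in Proposition \ref{pr:gwp-lwpUav}.
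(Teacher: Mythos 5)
Your unweighted part coincides with the paper's: the dispersive bound \eqref{e:L1-infty-disp} from Yajima's parametrix plus $L^2$-unitarity, interpolation, and the Keel--Tao machinery \cite{KTao98} on the short interval $|t|<\de$. For the weighted estimates \eqref{e:U(t)f-qr-H1}--\eqref{e:stri-Hsqr-L}, however, you take a genuinely different route. You reduce to the unweighted case by conjugating $x_j$ and $-i\pa_j$ through $U(t)$, i.e.\ the commutator method of \cite{AMS12,Car11time}, which the paper explicitly sets aside; the paper instead works directly in $\sH^{1,r}$, using \cite[Lemma 3.1]{Ya91} to show that the oscillatory integral operator $U(t-s)$ itself is bounded on the weighted spaces (uniformly for $|t-s|<\de$), and then runs the same $TT^*$/Strichartz argument at the level of $\sH^{1,r}$ rather than $L^r$. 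The trade-off is this: your conjugation step is clean when the Hamiltonian is exactly quadratic in $(x,-i\nabla)$, since then the Heisenberg flow of $x_j$ and $-i\pa_j$ closes into a finite linear system; but as stated, the claim that $U(t)^{-1}x_jU(t)-x_j$ and $U(t)^{-1}(-i\pa_j)U(t)+i\pa_j$ are \emph{bounded} on $L^r$ is not right (for the free flow the correction is $t(-i\pa_j)$), and for the general time-dependent subquadratic class covered by the lemma you must replace the exact Heisenberg computation by the symbol-level commutator identities for Yajima's parametrix, exactly the ``non-routine step'' you flag. The paper's direct approach buys uniformity in this more general class (time-dependent $A$, merely subquadratic $V_e$, possibly unbounded below) without tracking commutators term by term, which is why it is emphasized as different from \cite{AMS12,Car11time}; your approach, once the parametrix commutator calculus is invoked to make the conjugation rigorous, yields the same conclusion and is closer in spirit to the existing literature for the exactly quadratic rotating case.
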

The lemma here applies to the case where $V_e$ is subquadratic, e.g. $V_e(x)=\pm\sum_j \td{\ga}_j^2x_i^2$ for 
$\td{\ga}_j\ge 0$. 
This is a generalized version for the Strichartz estimates given in  \cite{deB91,Z12a}  where  $V_e$ is required being bounded from below.  
In the proof of the lemma, we directly study the action of $U(t-s)$ on the space $\sH^{1,r}$ 
based on \cite[Lemma 3.1]{Ya91}, 
an oscillatory integral operator result of  Yajima. 
This provides a treatment for $A$ sublinear and $V$ subquadratic in the time-dependent case that covers those in 
Proposition \ref{pr:gwp-lwpUav}. 
Such treatment is more general and also very different than the commutator method used in 
 \cite{AMS12,Car11time}.

\section{Virial identity for NLS with rotation} 
\subsection{Virial identity for (\ref{e:nls-H-rot})} 
In this subsection we derive the virial identity associated with  equation \eqref{e:nls-H-rot}. 
Let $A$ be given as in (\ref{A:Om-x}) 
and 
$L_\Om=
 i A\cdot \nabla$.  
The proof of Theorem \ref{t:gwp-blowup-Om} is based on the following  lemma for the variance 
\begin{equation}
	J(t) := \int |x|^2 |u|^2.\label{e:variance-J(t)}
	\end{equation}

\setcounter{tocdepth}{2} 
\setcounter{subsection}{0}
 
\begin{lemma}[Virial identity]\label{l:virial-J(t)-Vom} Let $u$ be solution of \eqref{e:nls-H-rot} with initial data 
$u_0\in  \sH^1$.  Then 
\begin{align}
J'(t) =& 2 \Im \int x \overline{u} \cdot \nabla u 
 \label{eJ':xu-deU_A}\\
J''(t)=& 2 \int |\nabla u|^2-2 \gamma^2 \int |x|^2 |u|^2
	-  2n\lam  \frac{p-1}{p+1} \int |u|^{p+1}\qquad \notag\\
	=&4E_{\Om,V}(u)-4\ga^2\int |x|^2 |u|^2
+\frac{2\lam}{p+1}(4-n(p-1)) \int |u|^{p+1}-4\la L_\Om(u),u\ra. \label{eJ'':4E(u)OmV}
\end{align}
\end{lemma}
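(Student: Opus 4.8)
The plan is to compute $J'(t)$ and $J''(t)$ directly by differentiating under the integral sign, using the equation \eqref{e:nls-H-rot} to substitute for $u_t$, and then integrating by parts. Throughout, one works formally with smooth, rapidly decaying solutions and then invokes the standard density/approximation argument (regularizing the initial data and passing to the limit using the $\sH^1$-continuity of the flow from Proposition \ref{pr:gwp-lwpUav}) to justify the identities for $u_0 \in \sH^1$; I would state this once and not belabor it. First I would write $\frac{d}{dt}\int |x|^2|u|^2 = \int |x|^2(\bar u u_t + u \bar u_t) = 2\,\mathrm{Re}\int |x|^2 \bar u u_t$. Substituting $u_t = i(-\tfrac12\Delta u + Vu - \lam|u|^{p-1}u + L_\Om u)$: the potential term $Vu$ and the nonlinear term $-\lam|u|^{p-1}u$ are real multiples of $u$, so they contribute $2\,\mathrm{Re}(i\cdot\text{real}) = 0$ after multiplying by $\bar u$ and integrating; the Laplacian term gives $2\,\mathrm{Re}\int |x|^2 \bar u \cdot(-\tfrac{i}{2}\Delta u) = \mathrm{Im}\int |x|^2 \bar u \Delta u = -\mathrm{Im}\int \nabla(|x|^2\bar u)\cdot\nabla u = -\mathrm{Im}\int 2x\bar u\cdot\nabla u = 2\,\mathrm{Im}\int x\bar u\cdot\nabla u$ (the $|x|^2\nabla\bar u\cdot\nabla u$ piece is real, hence killed by $\mathrm{Im}$). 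Finally I must check the rotation term $L_\Om u = iA\cdot\nabla u$ contributes nothing: $2\,\mathrm{Re}\int |x|^2 \bar u \cdot i(iA\cdot\nabla u) = -2\,\mathrm{Re}\int |x|^2\bar u (A\cdot\nabla u)$, and since $A = \Om\langle -x_2,x_1,0,\dots,0\rangle$ one has $A\cdot\nabla|x|^2 = \Om(-x_2\cdot 2x_1 + x_1\cdot 2x_2) = 0$ and $\dive A = 0$, so integrating by parts shows $\mathrm{Re}\int |x|^2\bar u(A\cdot\nabla u) = \tfrac12\int A\cdot\nabla(|x|^2|u|^2) = 0$. This yields \eqref{eJ':xu-deU_A}.

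For $J''(t)$ I would differentiate $J'(t) = 2\,\mathrm{Im}\int x\bar u\cdot\nabla u$ again, substitute the equation, and integrate by parts. This is the familiar Morawetz/virial computation for NLS with a harmonic potential, plus a magnetic term; the standard identity for $-\tfrac12\Delta$ produces $2\int|\nabla u|^2 - 2n\lam\frac{p-1}{p+1}\int|u|^{p+1}$, the harmonic potential $V = \tfrac12\ga^2|x|^2$ contributes $-2\ga^2\int|x|^2|u|^2$ (from $-\int x\cdot\nabla V\,|u|^2 = -\int \ga^2|x|^2|u|^2$, a factor $2$ coming from the two occurrences of $u$), and I must verify that the magnetic/rotation term $L_\Om$ again contributes zero to $J''$. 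The latter is the one genuinely new check: one computes $\frac{d}{dt}\big[2\,\mathrm{Im}\int x\bar u\cdot\nabla u\big]$ picking out the $L_\Om u$ part of $u_t$ and shows, using $\dive A = 0$ and the specific rotational form of $A$ (so that $A$ is divergence-free and tangent to spheres), that all the resulting boundary-free integrals cancel. I expect this magnetic cancellation — keeping careful track of real versus imaginary parts and the product rule on $x\bar u\cdot\nabla u$ — to be the main technical obstacle; it is a routine but error-prone integration by parts, and the cleanest route is probably to use the commutator identity $[H_{\Om,V}, |x|^2]$ or equivalently to exploit that $L_\Om$ commutes with the dilation generator's relevant pieces because $A\cdot x = 0$.

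Having established the first line of \eqref{eJ'':4E(u)OmV}, the second line is pure algebra: I would substitute the conserved energy $E_{\Om,V}(u) = \int(\tfrac12|\nabla u|^2 + V|u|^2 - \tfrac{2\lam}{p+1}|u|^{p+1}) + \langle L_\Om u, u\rangle$ from \eqref{e:Eu-Vom} to express $2\int|\nabla u|^2$ in terms of $4E_{\Om,V}(u)$, the potential integral $\int V|u|^2 = \tfrac12\ga^2\int|x|^2|u|^2$, the nonlinear integral, and $\langle L_\Om u,u\rangle$, then collect terms; the coefficient $\frac{2\lam}{p+1}(4 - n(p-1))$ in front of $\int|u|^{p+1}$ and the $-4\langle L_\Om u, u\rangle$ emerge after bookkeeping. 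Note this last step uses only the conservation laws from Proposition \ref{pr:gwp-lwpUav}(d)–(e), already available. I would close by remarking that in the mass-critical case $p = 1 + 4/n$ the nonlinear term drops out, which is exactly the structure exploited in Lemma \ref{l:J-J'-blup}.
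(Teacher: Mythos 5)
Your proposal is correct and follows essentially the same route as the paper's own proof: differentiate $J$, substitute the equation, and integrate by parts, using $x\cdot A=0$, $\dive A=0$, $L_z(|x|^2)=0$ and the reality of $\la L_z u,u\ra$, with the second line of the $J''$ identity obtained by pure algebra from the energy. The one detail worth noting is that in the paper the rotation term does not vanish piecewise: the contribution from the $\Im\int x\cdot\nabla\bar u\,u_t$ part equals $-\frac{n}{2}\Om\int u\,\overline{L_z u}$ and cancels against the $\Om$-piece coming from $\Im\int \bar u\,u_t$ precisely because $\int\bar u L_z u$ is real --- which is exactly the real-versus-imaginary bookkeeping you anticipated.
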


 Note that  $\la  L_\Om u, u\ra$ is real since $L_\Om$ is selfadjoint. 
The virial  inequality will be used to analyze wave collapse in finite time. 
Identity (\ref{eJ'':4E(u)OmV})  can also be derived from the  magnetic analog \cite{FV09,Gar12,BHIM}. 
In Section \ref{s:Virial-inhom}, we will give a version of the virial identity in the case of  inhomogeneous nonlinearity. 

\begin{proof}[Proof of Lemma \ref{l:virial-J(t)-Vom}]    
We may assume $u\in C^1(I, C^2_0\cap \sH^1)$ and $I=[0,T_{max})$.
For general data 
the identities \eqref{eJ':xu-deU_A} and \eqref{eJ'':4E(u)OmV}   follow from a standard approximation argument. 
First we obtain the identity \eqref{eJ':xu-deU_A} 
from \eqref{e:variance-J(t)} and \eqref{e:nls-H-rot}  by integration by parts, 	
where note that 
\begin{enumerate}
\item[(i)] $V$ and $\lam$ are real-valued; 
  \item[(ii)] $x\cdot A=0$;
 \item[(iii)] $L_{z}= i \left(x_2 \partial_{x_1} - x_1 \partial_{x_2} \right)$ is self-adjoint 
and $L_{z} (|x|^2)=0$. 
\end{enumerate}
Then, it follows by differentiating in $t$ on (\ref{eJ':xu-deU_A}) that
	\begin{equation*}
	\begin{aligned}
	J''(t)
	&= 2 \Im \left( \int x \overline{u} \cdot \nabla u_t
	+ \int x \overline{u}_t \cdot \nabla u \right)
	= 2 \Im \left( - \int \nabla \cdot ( x \overline{u} ) u_t + \int x \overline{u}_t \cdot \nabla u \right) \\
	& = -2n \Im \left( \int \overline{u} u_t \right)
	-4 \Im \left( \int x \cdot \nabla \bar{u} \,u_t \right) \\
	& := -2n S -4 T.
	\end{aligned}
	\end{equation*} 
Noting that  $\displaystyle \int \overline{u} L_{z} u$ is real,  
we have by a simple calculation 
	\begin{equation*}
	S = - \frac{1}{2} \int |\nabla u|^2
	- \frac{\gamma^2}{2} \int |x|^2 |u|^2
	+ \lambda \int |u|^{p+1}
	+ \Om\int \overline{u}  L_z u.
	\end{equation*} 
To compute $T$,
one has
	\begin{equation*}
	\begin{aligned}
	T=& 
	  \Im \left(\frac{i}{2} \int x \cdot \nabla \overline{u} \, \Delta u \right)
	+ \Im \left( -i\frac{\gamma^2}{2}\int x \cdot \nabla \overline{u}\,(  |x|^2 u ) \right) \\
	&+  \Im \left(i\lam \int x \cdot \nabla \overline{u}	\, |u|^{p-1} u \right)
	+ \Im \left(i \Om \int x \cdot \nabla \overline{u} \, L_z u \right) \\
	:=& T_1 + T_2 + T_3 + T_4.
	\end{aligned} 
	\end{equation*}
For $T_1$, integration by parts gives
	\begin{align*}
	T_1
	=& \Im \left( -i \frac{1}{2} \sum_{j,k=1}^n
	\int \left( \delta_{j,k} \overline{u}_{x_j} + x_j \overline{u}_{x_j x_k} \right) u_{x_k} \right) \\
	=& \Im \left( -i \frac{1}{2} \int |\nabla u|^2 \right)
	+ \Im \left( - i \frac{1}{2} \sum_{j, k=1}^n \int x_j \overline{u}_{x_j x_k} u_{x_k} \right)
	:= -\frac{1}{2} \int |\nabla u|^2 + T_{1, 1}\, ,  \quad 
	\end{align*}
where $\de_{j,k}$ denote the Kronecker delta. 
To compute $T_{1,1}$, one has
	\begin{align*}
	T_{1,1}
	& = \Im \left( \frac{i}{2} \sum_{j, k=1}^n \int \left( x_j u_{x_k} \right)_{x_j} \overline{u}_{x_k} \right)
	= \Im \left( i \frac{1}{2} \sum_{j, k=1}^n \int \left( u_{x_k} + x_j u_{x_k x_j} \right) \overline{u}_{x_k} \right) \\
	& = \frac{n}{2} \int |\nabla u|^2
	+ \Im \left( \frac{i}{2} \sum_{j, k=1}^n \int x_j u_{x_k x_j} \overline{u}_{x_k} \right)
	= \frac{n}{2} \int |\nabla u|^2 {-} T_{1,1}.
	\end{align*}
This shows that $\displaystyle T_{1,1} = \frac{n}{4} \int |\nabla u|^2$,
and so $T_{1} = \frac{n-2}{4} \int |\nabla u|^2$.
	For $T_2$ and $T_3$, applying divergence theorem shows that
	\begin{align*}
	T_2 =& \frac{n+2}{4} \ga^2\int |x|^2 |u|^2.\\  
	T_3 =& \frac{- \lam n}{p+1} \int |u|^{p+1}.
	\end{align*}
For $T_4$,  integration by parts gives 
	\begin{align*}
	T_{4}=&  
	  \Im \left( -\Om \int \sum_{k=1}^n x_k \overline{u}_{x_k} \left( x_2 u_{x_1} - x_1 u_{x_2} \right) \right) \\
	 =& \Im \left( \Om \int \left( \sum_{k=1}^n x_k \overline{u}_{x_k} x_2 \right)_{x_1} u \right)
	- \Im \left( \Om \int \left( \sum_{k=1}^n x_k \overline{u}_{x_k} x_1 \right)_{x_2} u \right) \\
	 =& \Im \left( i \Om \int u \overline{L_{z} u} \right)
	+ \sum_{k=1}^n  \Im \left( - \Om \int \left( x_k x_2 u \right)_{x_k} \overline{u}_{x_1} \right)
	- \sum_{k=1}^n  \Im \left( -\Om \int \left( x_k x_1 u \right)_{x_k} \overline{u}_{x_2} \right) \\
	 =& \Om \int u \overline{L_{z} u}
	+ \Im \left( -n \Om \int u \left( x_2 \overline{u}_{x_1} - x_1 \overline{u}_{x_2} \right) \right)
	+ \Im \left( - \Om \int u \left( x_2 \overline{u}_{x_1} - x_1 \overline{u}_{x_2} \right) \right) \\
	+&\Im\left( - \Om\int x\cdot\nabla u \left( x_2 \bar{u}_{x_1}-x_1\bar{u}_{x_2}\right) \right)\\
	 =& -n \Om \int u \overline{L_{z} u} {- T_{4}}. \qquad 
	\end{align*}
This shows that $\displaystyle T_{4} = -\frac{n}{2} \Om \int u \overline{L_{z} u}$. 
Collecting all $T_i$ terms yields
	\begin{equation*}
	\begin{aligned}
	T
	= \frac{n-2}{4} \int |\nabla u|^2
	+ \frac{(n+2) \ga^2}{4} \int |x|^2 |u|^2
	- \frac{n \lam}{p+1} \int |u|^{p+1}
	-\frac{n}{2} \Om\int u \overline{ L_z u}.
	\end{aligned}
	\end{equation*}
Finally, we obtain the virial identity (\ref{eJ'':4E(u)OmV})
	\begin{equation*}
	\begin{aligned}
	J''(t)
	& = -2n \left( - \frac{1}{2} \int |\nabla u|^2
	- \frac{\ga^2}{2} \int |x|^2 |u|^2
	+ \lam \int |u|^{p+1}
	+ \Om\int \overline{u}  L_z u \right) \\
	& \quad -4 \left( \frac{n-2}{4} \int |\nabla u|^2
	+ \frac{(n+2) \gamma^2}{4} \int |x|^2 |u|^2
	- \frac{n \lambda}{p+1} \int |u|^{p+1}
	-\frac{n}{2} \Om\int u \overline{L_z u} \right) \\
	& = 2 \int |\nabla u|^2
	-2 \gamma^2 \int |x|^2 |u|^2
	- 2n\lam \left(\frac{p-1}{p+1} \right) \int |u|^{p+1}. 
	\end{aligned}
	\end{equation*} 
\end{proof}

\setcounter{tocdepth}{2} 
\setcounter{subsection}{1}

\subsection{The conservation of angular momentum}\label{ss:conserv-angularOm} 
For $A$ given in (\ref{A:Om-x}) 
we show (\ref{e:conservLom-momen}) in Proposition \ref{pr:gwp-lwpUav} 
that is, the angular momentum $\ell_\Om(u)$ 
  is conserved  in $t$. 

\begin{proof} 
 Recall that $ \ell_\Om (u) = -\Om\int \bar{u} L_zu$. It suffices to show $\frac{d}{dt} \int \bar{u} L_z u =0$.
Differentiating  $\la L_zu, u\ra$ with respect to $t$ 
and substituting  $u_t$ from equation \eqref{e:nls-H-rot},
we have 
\begin{align*}
 \frac{d}{dt} \left( \la  L_z u,u \ra\right) 
=&2\Re\int \bar{u}_t L_zu \\
 =&\Re\int\De \bar{u} (x_2\pa_{x_1}{u}-x_1\pa_{x_2}{u})-2\Re\int V\bar{u} (x_2\pa_{x_1}-x_1\pa_{x_2}){u}\\ 
 +&2\Re\int \lam \bar{u}|u|^{p-1} (x_2\pa_{x_1}-x_1\partial_{x_2}) u\\
 =:& 2\Re (\frac12I_1 - I_2 + I_3) .
 \end{align*}
 Noting $[\De,L_z]=0$, 
we obtain
 $I_1= - \overline{I_1}$, that is, $\Re I_1=0$. 
For $ I_2 $, we have 
\begin{align*}
 I_2=& \int V\bar{u} (x_2\partial_{x_1}-x_1\partial_{x_2}) u\\
=& -\int (\widetilde{L}_z V) |u|^2 - \int V {u} \widetilde{L}_z\bar{u}\\
=&0-\int V u \widetilde{L}_z\bar{u} \, ,\end{align*}
which implies  $\Re I_2= 0$,
where we have defined 
$\widetilde{L}_z=x_2\pa_{x_1}-x_1\pa_{x_2}$
and noted 
$ \widetilde{L}_z V(x)=0$.

For $ I_3 $, since $\wtd{L}_z$ is skew-symmetric, we have 
\begin{align*}
 I_3=&\lam\int  \bar{u}|u|^{p-1} \wtd{L}_z  u\\
 =&-\lam\int u |u|^{p-1}  (x_2\pa_{x_1}- x_1\pa_{x_2})\bar{u} -\lam\int |u|^2  (x_2\pa_{x_1}- x_1\pa_{x_2})(| u|^{p-1})\\
=&-\lam\int u |u|^{p-1} \overline{\wtd{L}_zu} -\lam(p-1) \Re \int (| u|^{p-1}u) ( x_2\pa_{x_1} - x_1\pa_{x_2}) \bar{u}.
\end{align*}
Taking the real part gives
\begin{align*}
 &\Re (I_3)= -p \Re(I_3),
  \end{align*}
that is, $ \Re(I_3) =0$.
Combining the results of $ I_1, I_2, I_3 $ above, we obtain
\begin{equation*}
 \frac{d}{dt} ( \ell_\Om(u)) = 0 .
\end{equation*}
\end{proof}

\section{Blowup criterion for $\Om\ne 0$}\label{ss:proof-blowup-rotNLS} 
In this section, we prove some general criteria on finite time blowup for the solutions of  RNLS (\ref{e:nls-H-rot})
based on the virial identity and conservation of mass, energy and angular momentum.
\begin{lemma}\label{l:J-J'-blup} Let  $p\in [1+\frac{4}{n}, 1+\frac{4}{n-2})$ and $\lam>0$. 
Suppose $0\ne u_0 \in \sH^1$ satisfies one of the following conditions: 
\begin{enumerate}
\item[(a)] 
 $ E_{\Om,V} (u_0 )-\ell_\Om (u_0)\le \frac{\ga^2}{2}J(0)$, if  $p=1+4/n$. 
\item[(b)] $\frac{\ga^2}{2}J(0)<E_{\Om,V} (u_0 )-\ell_\Om (u_0)\le  \frac{\ga}{2} |J'(0)|$, if $p= 1+4/n$.
\item[(c)]  $E_{\Om,V} (u_0 )-\ell_\Om (u_0)<0$, if $p> 1+4/n$.  
\item[(d)] $E_{\Om,V} (u_0 )-\ell_\Om (u_0)=0$ and $J'(0)<0$, if $p> 1+4/n$. 
\end{enumerate}
Then there exists $0<T_*<\iy$ such that 
the corresponding solution  $u$ of Equation \eqref{e:nls-H-rot}  blows up on $[0,T_*)$ 
satisfying  \begin{align}\label{deU:T-t}
&\Vert \nabla u\Vert_2 \to\iy
\quad as \; t\to T_*\,.
\end{align}
\end{lemma}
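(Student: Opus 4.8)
The plan is to use the virial identity from Lemma \ref{l:virial-J(t)-Vom} to reduce the blowup question to the behavior of the scalar function $J(t)=\int |x|^2|u|^2$, and then to show that in each of the four cases $J(t)$ must hit zero in finite time. Since $J(t)\ge 0$ always, once we locate a finite $T_*$ with $\lim_{t\to T_*}J(t)=0$, the uncertainty-principle inequality $\|u\|_2^2 = \|u_0\|_2^2 \le \frac{2}{n}\|xu\|_2\|\nabla u\|_2 = \frac{2}{n}J(t)^{1/2}\|\nabla u\|_2$ forces $\|\nabla u\|_2\to\infty$, which combined with the blowup alternative in Proposition \ref{pr:gwp-lwpUav}(c) gives \eqref{deU:T-t} on a finite lifespan. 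So the whole proof is about the ODE for $J$.

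First I would rewrite \eqref{eJ'':4E(u)OmV} using the conservation of mass, energy, and angular momentum (Proposition \ref{pr:gwp-lwpUav}(d)-(e) and Section \ref{ss:conserv-angularOm}): writing $c_p := \frac{2\lam}{p+1}(4-n(p-1)) \le 0$ for $p\ge 1+4/n$, and abbreviating $K := 4(E_{\Om,V}(u_0)-\ell_\Om(u_0))$ (a conserved quantity), the identity becomes
\begin{equation*}
J''(t) = K - 4\ga^2 J(t) + c_p\int |u|^{p+1}.
\end{equation*}
When $p=1+4/n$ the last term vanishes and we get the \emph{exact} linear ODE $J''+4\ga^2 J = K$, whose general solution is $J(t) = \frac{K}{4\ga^2} + a\cos(2\ga t) + b\sin(2\ga t)$ with $a = J(0)-\frac{K}{4\ga^2}$ and $b = \frac{J'(0)}{2\ga}$ (here using $\ga>0$; the degenerate case $\ga\to 0$ is handled by the quadratic $J(t)=J(0)+J'(0)t+\frac{K}{2}t^2$, recovering the classical virial argument). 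This is exactly equation \eqref{e:J''+J-EL0p} referred to in the text. In case (a), $K\le 2\ga^2 J(0)$ means $\frac{K}{4\ga^2}\le \frac12 J(0)$, so the mean value of the oscillation is at most half the amplitude contribution from $a$ at $t=0$; writing $J(t) = \frac{K}{4\ga^2} + R\cos(2\ga t - \phi)$ with $R = \sqrt{a^2+b^2}$, one checks $R \ge |a| \ge \frac{K}{4\ga^2}$, hence $J$ attains the value $\frac{K}{4\ga^2}-R \le 0$ within one period, i.e. some $T_*\le \pi/\ga$; since $J>0$ on $[0,T_{max})$ by definition while the formula would force $J\le 0$, the lifespan must end before that, giving finite-time blowup. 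Case (b) is the refinement: when $\frac{K}{4\ga^2}\in(\frac12 J(0), \text{something}]$ we can no longer use $|a|$ alone, but the extra hypothesis $K \le 2\ga|J'(0)|$, i.e. $\frac{K}{4\ga^2}\le \frac{|b|}{2}$ (wait—rather $\frac{K}{4\ga^2} \le \frac{|J'(0)|}{4\ga} \cdot \frac{1}{?}$; the precise bookkeeping is $\tfrac{\ga}{2}|J'(0)| = \ga^2 |b|$, so the condition reads $K \le 4\ga^2|b|$, i.e. $\frac{K}{4\ga^2}\le |b|$) lets us instead bound $R\ge |b| \ge \frac{K}{4\ga^2}$ and conclude the same way. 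For cases (c) and (d) with $p>1+4/n$, I would not solve the ODE exactly; instead note $c_p\int|u|^{p+1}\le 0$, so $J''(t) \le K - 4\ga^2 J(t)$, and hence $g(t) := J(t) - \frac{K}{4\ga^2}$ satisfies $g'' + 4\ga^2 g \le 0$. Comparing with the solution of the equality — concretely, multiply by $\sin(2\ga(T-t))$-type kernels or use the substitution that turns this into a concavity statement along the oscillation — one shows $J(t)$ is dominated by a solution that goes negative in finite time whenever $K<0$ (case (c)), or when $K=0$ and $J'(0)<0$ (case (d), where $J$ is initially decreasing and the super-solution $\frac{J'(0)}{2\ga}\sin(2\ga t)$ crosses zero from below).

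The main obstacle, and the delicate point the introduction emphasizes, is case (a) at the \emph{borderline} — when equality $E_{\Om,V}(u_0)-\ell_\Om(u_0) = \frac{\ga^2}{2}J(0)$ holds (so $\frac{K}{4\ga^2} = \frac12 J(0)$, as happens for $u_0 = Q$-type profiles), because then one needs $J(t)$ to actually reach $0$, not just become non-positive, and one must rule out the marginal situation where $J$ merely touches $0$ tangentially at one instant without the solution having blown up earlier. Here the exact solvability of the linear ODE is essential: the explicit formula $J(t) = \frac{K}{4\ga^2} + a\cos(2\ga t)+b\sin(2\ga t)$ shows that if $J(t)>0$ were to persist on all of $[0,T_{max})$ then $T_{max}$ could be taken arbitrarily large only if the cosine/sine combination never dipped below $-\frac{K}{4\ga^2}$, which fails; the precise estimate is to compute the first zero $T_*$ of the explicit $J$ and to check $J'(T_*) \ne 0$ (in fact $J'(T_*) = -2\ga R \sin(\ldots) \ne 0$ generically), which additionally yields, by $\|u_0\|_2^2 \le \frac{2}{n}J(t)^{1/2}\|\nabla u\|_2$ evaluated as $t\uparrow T_*$ together with $J(t)\lesssim J'(T_*)^2 (T_*-t)^2 \lesssim (T_*-t)$ wait—$J(t) = J(T_*) + J'(T_*)(t-T_*) + O((t-T_*)^2)$ with $J(T_*)=0$, so $J(t) \le C(T_*-t)$—the lower bound on the blowup rate $\|\nabla u\|_2 \gtrsim \|u_0\|_2 (T_*-t)^{-1/2}$ recorded in \eqref{deU-T-t}. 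Filling in the sign analysis of $R$ versus $\frac{K}{4\ga^2}$ in all sub-regimes of (a) and (b), and making the ``$J$ would be forced negative hence lifespan ends'' step rigorous against the blowup alternative, is where the real work lies; everything else is the routine virial machinery.
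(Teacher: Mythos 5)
Your proposal is correct and follows essentially the same route as the paper: the virial identity reduces everything to the ODE $J''+4\ga^2 J = 4\left(E_{\Om,V}(u_0)-\ell_\Om(u_0)\right)$ in the mass-critical case, whose explicit sinusoidal solution together with the amplitude bound $R\ge \ga^{-2}\left|E_{\Om,V}(u_0)-\ell_\Om(u_0)\right|$ (your cases (a)--(b) match the paper's Claim \eqref{e:J(t)-0} exactly) forces $J$ to reach zero in finite time, after which the uncertainty principle and the blowup alternative give \eqref{deU:T-t} and the rate \eqref{deU-T-t}. The only minor difference is in cases (c)--(d), where the paper simply discards the $-4\ga^2 J$ term and uses the parabola bound $J(t)\le J(0)+J'(0)t+2\left(E_{\Om,V}(u_0)-\ell_\Om(u_0)\right)t^2$, which is cleaner than your trigonometric comparison (whose Duhamel-kernel positivity holds only on a finite window, though the relevant zero does fall inside it).
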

  
\begin{remark} The condition in part (a) of the lemma is equivalent to 
\begin{align}
&E_{0,0} (u_0 )=\frac12\int |\nabla u_0|^2 -\frac{\lambda n}{n+2}\int |u_0|^{2+\frac{4}{n}}\le 0 .\label{e:E0<Lom}
\end{align}
\end{remark}

\begin{remark}[Examples of $u_0$ verifying conditions in Lemma \ref{l:J-J'-blup}]
For the conditions (a) and (c)-(d) in the lemma, there exist initial data that are radially symmetric
as given in \eqref{Q:Calpha} and \eqref{E:u0-enux2Q} in
the proofs for Theorem \ref{t:gwp-blowup-Om} and Proposition \ref{p:blowup-masssup},  respectively.

Here we provide some other type of examples that are not radially symmetric. 
\begin{enumerate}
\item[(i)] {\em Conditions (a) and (c)}. 
Let $x=(r,\theta,z)$,  where  $(r,\theta)$ are the polar coordinates of $(x_1,x_2)\in\R^2$ and $z=(x_3,\dots,x_n)\in\R^{n-2}$. 
Let $\vphi$  be real-valued and belong in $\Sigma \cap \{ \int_{\R^n} r^{-2}|\vphi(x)|^2 dx<\iy  \}$. 
Consider the initial value
	\begin{align*}
	u_0(x)=Ce^{im\theta}\vphi(x),
	\end{align*}
where $C$, $m$ are real constants.  
We have
	\begin{align*}
	&E_{\Omega,V}(u_0)-\ell_\Omega(u_0)\\
	=&\frac{C^2m^2}{2} \int \frac{|\vphi|^2}{r^2}
	+\frac{C^2}{2} \int |\nabla \vphi|^2
	+\frac{\gamma^2C^2}{2} \int |x|^2|\vphi|^2
	-\frac{2\lambda |C|^{p+1}}{p+1} \int |\vphi|^{p+1}.
	\end{align*}
Then the conditions in  Lemma 4.1 (a) and (c) can be  achieved if we choose $C>0$ sufficiently large.
 

\item[(ii)] {\em Conditions (b) and (d).} Consider the initial value
	\begin{align*}
	u_0(x)=Ce^{i\omega\cdot x}Q(x-x_0),
	\end{align*}
where $C\in\R$, 
$\omega\in \R^n$, $0\ne x_0\in\mathbb{R}^n$, 
and $Q$ is the positive radial solution of (\ref{e:grstQ-masscri}). 
For condition (b), 
we will choose appropriate $C$, $\omega$,  and $x_0$ such that the following is satisfied: For $p=1+\frac{4}{n}$\,,
	\begin{align}\label{E:J(0)<dJ(0)}
	\frac{\gamma^2}{2}J(0)<E_{\Omega,V}(u_0)-\ell_\Omega(u_0)\leq \frac{\gamma}{2}|J'(0)| \,.
		\end{align}
 We compute using \eqref{eJ':xu-deU_A} and
 Lemma \ref{l:pohozaev-Qp} to obtain \n
  \mbox{$J'(0)=2 C^2 \omega\cdot x_0 \int  Q^2$} 
and \begin{align*}
	&E_{\Omega,V}(u_0)-\ell_\Omega(u_0)\\
	=&\int\left( \frac{C^2|\omega|^2}{2}Q^2
	+\frac{C^2}{2}|\nabla Q|^2
	+\frac{\gamma^2}{2}|x|^2C^2Q^2(x-x_0)
	-\frac{2\lambda |C|^{p+1}}{p+1}Q^{p+1}\right) \\
	=&\frac{C^2|\omega|^2+nC^2-n |C|^{2+\frac{4}{n}}+\gamma^2 C^2|x_0|^2}{2} \int Q^2
	+\frac{\gamma^2 C^2}{2}\int |x|^2 Q^2\,.
	\end{align*}
It is easy to observe that  the first inequality in (\ref{E:J(0)<dJ(0)}) is valid if
	\begin{align}\label{eq:lemma-4.1-b-1}
	0<C<\left( \frac{|\omega|^2+n}{n} \right)^{n/4},
	\end{align}
and the second inequality in \eqref{E:J(0)<dJ(0)} is valid if
	\begin{align}\label{eq:b2}
	|\omega|^2+n-nC^{\frac{4}{n}}+\gamma^2 |x_0|^2
	\leq \gamma |\omega\cdot x_0| \qquad
	\text{and} \qquad
	\frac{\gamma \int |x|^2 Q^2}{\int  Q^2}
	\leq |\omega\cdot x_0|.
	\end{align}
But both the conditions  \eqref{eq:lemma-4.1-b-1} and \eqref{eq:b2} hold simultaneously
by choosing $C$, $\omega$ and $x_0$ such that $|\omega|^2+n-nC^{\frac{4}{n}}$ is a small positive number and $|\omega\cdot x_0|$  large. 

For condition (d), 
we will choose appropriate $C$, $\omega$, and $x_0$ such that the following is satisfied: For $p>1+\frac{4}{n}$,
	\begin{align}\label{EOmV:0:deJ}
	E_{\Omega,V}(u_0)-\ell_\Omega(u_0)=0 \qquad 
	\text{and} \qquad
	J'(0)<0 \,.
		\end{align}
 By the previous calculations using the  Pohozaev identities in Lemma \ref{l:pohozaev-Qp},
 the above equality and inequality in (\ref{EOmV:0:deJ}) are equivalent to
	\begin{align*}
	|C|^{p-1}
	=& \frac{\big(2(p+1)-n(p-1)\big) (|\omega|^2+\ga^2|x_0|^2)+2n(p-1)}{8} \\
	+&\frac{\big(2(p+1)-n(p-1)\big)\gamma^2 \int |x|^2Q^2}{8\int Q^2}
	\end{align*}
and
	\begin{align*}
	 \omega\cdot x_0 \int  Q^2<0\,,
	\end{align*}
respectively.
We see that one can choose $\omega$ and $x_0$ such that 
\mbox{$\omega\cdot x_0<0$} so the second inequality is satisfied,
and then determine $C$ according to the first equality. 
\end{enumerate}
\end{remark}

We will also need the uncertainty principle given in  \cite{Wein83,Zh00}. 
\begin{lemma}\label{l:UP}
\begin{align}
&\int_{\R^n} |u|^2dx\le \frac{2}{n}\left(\int_{\R^n} |\nabla u|^2dx\right)^{1/2} \left(\int_{\R^n} |x|^2 |u|^2dx\right)^{1/2},\label{e:UP-deu-xu2} 
\end{align}
where the equality is achieved with $u=e^{-|x|^2/2}$.
\end{lemma}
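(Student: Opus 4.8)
The plan is to recover Lemma \ref{l:UP} from the classical Heisenberg uncertainty principle, whose proof consists of a single integration by parts followed by Cauchy--Schwarz. Since $C_0^\infty(\R^n)$ is dense in $\sH^1$ and both sides of \eqref{e:UP-deu-xu2} are continuous in the $\sH^1$ topology, it suffices to establish the inequality for $u\in C_0^\infty(\R^n)$, where the manipulations below are legitimate and produce no boundary terms; the general case then follows by approximation.

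First I would use $n=\dive x=\sum_{j=1}^n \pa_{x_j} x_j$ and integrate by parts,
\[
n\int_{\R^n}|u|^2 \;=\; \int_{\R^n}(\dive x)\,|u|^2 \;=\; -\int_{\R^n} x\cdot\nabla(|u|^2) \;=\; -2\,\Re\!\int_{\R^n} (x\,\overline{u})\cdot\nabla u ,
\]
and then estimate the last integral by Cauchy--Schwarz, first pointwise via $|x\,\overline{u}\cdot\nabla u|\le |x|\,|u|\,|\nabla u|$ and then in $L^2(\R^n)$:
\[
n\int_{\R^n}|u|^2 \;\le\; 2\int_{\R^n}|x|\,|u|\,|\nabla u| \;\le\; 2\Big(\int_{\R^n}|\nabla u|^2\Big)^{1/2}\Big(\int_{\R^n}|x|^2|u|^2\Big)^{1/2}.
\]
Dividing by $n$ gives the uncertainty inequality \eqref{e:UP-deu-xu2}.

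For the equality assertion, equality in the $L^2$ Cauchy--Schwarz step forces $\nabla u=c\,x\,u$ almost everywhere for some constant $c$, and positivity of the left-hand side together with the pointwise equality forces $c$ to be real and negative; the only $L^2(\R^n)$ solutions of this first-order equation are the Gaussians $u=a\,e^{c|x|^2/2}$ with $a\neq 0$, of which $u=e^{-|x|^2/2}$ is one. A direct computation using $\int_{\R^n}|x|^2 e^{-|x|^2}\,dx=\tfrac{n}{2}\int_{\R^n}e^{-|x|^2}\,dx$ then confirms that \eqref{e:UP-deu-xu2} holds with equality for this $u$. I do not expect any genuine obstacle here; the only point requiring some care is the approximation argument from $C_0^\infty$ to a general $u\in\sH^1$, which in particular uses that $|x|\,u$ and $\nabla u$ both lie in $L^2$ so that the right-hand side is finite --- which is exactly what membership in $\sH^1$ provides.
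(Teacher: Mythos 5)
Your argument is the classical integration-by-parts plus Cauchy--Schwarz proof of the Heisenberg uncertainty principle, and it is correct in substance; the paper itself gives no proof of Lemma \ref{l:UP} at all, merely citing \cite{Wein83,Zh00}, where essentially this computation appears, so there is no methodological difference to weigh. One point of precision, though: what your computation actually establishes is the scale-invariant form
\[
\int_{\R^n}|u|^2 \;\le\; \frac{2}{n}\Big(\int_{\R^n}|\nabla u|^2\Big)^{1/2}\Big(\int_{\R^n}|x|^2|u|^2\Big)^{1/2},
\qquad\text{equivalently}\qquad
\Big(\int|u|^2\Big)^{2}\le \frac{4}{n^2}\int|\nabla u|^2\int|x|^2|u|^2,
\]
and it is only in this form that the Gaussian gives equality: for $u=e^{-|x|^2/2}$ one has $\int|u|^2=\pi^{n/2}$ while $\int|\nabla u|^2=\int|x|^2|u|^2=\tfrac{n}{2}\pi^{n/2}$. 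The display \eqref{e:UP-deu-xu2} as printed, with no square roots (equivalently, no square on the left-hand side), is not invariant under $u\mapsto \varepsilon u$ and cannot hold for all $u$, nor does the Gaussian saturate it; it is evidently a misprint. So your closing steps ``dividing by $n$ gives \eqref{e:UP-deu-xu2}'' and the equality verification should be stated as proving the corrected inequality above --- which is also the form the paper actually uses when deducing the lower bound (\ref{deU-T-t}) from $J(t)\to 0$. Your density reduction to $C_0^\infty$ and the identification of the equality case via $\nabla u=-c\,x\,u$ with $c>0$ (hence Gaussians) are fine as written.
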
 

Now we prove Lemma \ref{l:J-J'-blup}.
\begin{proof}[Proof of Lemma \ref{l:J-J'-blup}]  We divided the proof into three cases.  

\nd{\bf Case 1.}   Let $p=1+4/n$. 
From (\ref{eJ'':4E(u)OmV}) in  Lemma \ref{l:virial-J(t)-Vom}, along with \eqref{e:conservLom-momen} and  \eqref{e:Eu-Vom}  
we have   if  $p=1+4/n$,  
\begin{align}
 J''(t) +4\ga^2 J(t)=&4E_{\Om,V}(u_0)-4 \ell_\Om(u_0).\label{e:J''+J-EL0p}
 \end{align}
Solving this o.d.e. gives
 \begin{align}
J(t) =& (J(0)-\frac{1}{\ga^2}(E_{\Om,V}(u_0)-\ell_\Om(u_0)) ) \cos 2\ga t + \frac{J'(0)}{2\ga} \sin 2\ga t +\frac{1}{\ga^2} 
( E_{\Om,V}(u_0)-\ell_\Om(u_0)) \notag\\
=& C \sin( 2\ga t +\beta)+\frac{1}{\ga^2} (E_{\Om,V}(u_0)- \ell_\Om(u_0)) ,\label{J(t)-Csin}
\end{align} 
where  $C> 0 $ and $ \beta \in (0,\pi) $ are constants given by 
\begin{align*}
&C^2 = \left( J(0) -\frac{1}{\ga^2} (E_{\Om,V}(u_0) -\ell_\Om(u_0)) \right)^2 + \frac{1}{4\ga^2}\left(J'(0)\right)^2,\\ 
&\sin \beta=\frac{J(0)-\frac{1}{\ga^2} (E_{\Om,V}(u_0) -\ell_\Om(u_0))}{C}\ge 0\,,\quad (\text{by condition (a)})\\  
&\cos \beta=\frac{J'(0)}{2\ga C}\,.
\end{align*} 
 
Claim. \begin{align}\label{e:J(t)-0}  
 &C\ge \frac{1}{\ga^2} \left\vert E_{\Om,V}(u_0)- \ell_\Om(u_0)\right\vert.
\end{align}
Indeed,  the condition (a) always implies  (\ref{e:J(t)-0})  if $E(u_0)- \ell_\Om(u_0)$ is either positive  or negative. 
Now, from the equation 
  \begin{equation}\label{e:J(t)<Csin+ELom}
 0 \le J(t)= C\sin (2\ga t + \beta ) + \frac{1}{\ga^2} (E_{\Om,V}(u_0)- \ell_\Om(u_0))
 \end{equation} 
 we see that there must exist $ T_* \in (0, \frac{3\pi}{4\ga}) $ such that 
 \begin{align*}
  \lim_{ t\to T_*} J(t) = 0.  
\end{align*}
Furthermore, since $\lim_{t\to T_*}J'(t)=2\ga C\cos (2\ga T_*+\beta)$ is finite, 
we deduce from (\ref{e:UP-deu-xu2}) that there exists some constant $c_0$ such that 
\begin{align}\label{deU-T-t}
&  \Vert \nabla u(t)\Vert_2\ge  \frac{c_0 \Vert u_0\Vert_2^2}{\sqrt{T_*-t}} \,.
\end{align}

\vs{.20in}
\nd {\bf Case 2.} Let $p=1+4/n$.  If  $\frac{\ga^2}{2}J(0)<E_{\Om,V} (u_0 )-\ell_\Om (u_0)\le  \frac{\ga}{2} |J'(0)|$, then 
(\ref{e:J(t)-0}) still holds. It follows from the same line of proof in Case 1 that there exists 
  some $T_*$ such that  \eqref{deU:T-t} holds as $t\to T_*$.  

\vs{.20in}
\nd 
{\bf Case 3}. Let $u_0$ satisfy either condition (c) or (d). 
 From (\ref{eJ'':4E(u)OmV}) 
we have   if  $p\ge 1+4/n$,  
\begin{align*}
&J''(t)\le 4(E_{\Om,V}(u_0)-\ell_\Om(u_0)) .
\end{align*}
Then integrating twice yields 
  \begin{align*}
&J(t) \leq J(0) + J'(0) t + 2(E_{\Om,V}(u_0)- \ell_\Om(u_0))t^2. 
 \end{align*}
 We see   either condition (c) or (d) suggests  that  there exits $T_*<\infty$ such that 
 \[  J(t) \to 0\quad   
 \text{as $t\to T_*$}\] 
which implies (\ref{deU:T-t}) by (\ref{e:UP-deu-xu2}).  
 This concludes the proof of Lemma \ref{l:J-J'-blup}. 
\end{proof}

\begin{remark}\label{r:E00-T}  Condition (a) in the lemma suggests that if 
$E_{0,0}(u_0)\le 0$, 
then finite time blowup occurs.  
However,  in the absence of a potential and a rotation, i.e., $\Om=\ga=0$, 
the condition $E_{0,0}(u_0)=0$ alone may not imply blowup solutions in the mass-critical case, see \cite[Theorem 4.2]{Wein83}. 
If in addition $J'(0)\le 0$, then a closer look over the proof in Case 1 shows $T_*\in (0, \frac{\pi}{2\ga}]$. 
\end{remark}

\begin{remark}\label{re:T_star}
Here we elaborate on the existence $0<T_* <\frac{3\pi}{4\ga}$ in the proof of Lemma \ref{l:J-J'-blup}
under the conditions (a) and (b) where $p=1+4/n$.  
In fact,  this mainly follows from the observation that the function 
$C\sin (2\ga t + \beta)$ has an amplitude greater or equal to 
$ \frac{1}{\ga^2} | E_{\Om,V} (u_0 )-\ell_\Om (u_0)  | $, in view of (\ref{J(t)-Csin}). 

We divide our  discussions in three elementary cases. 
\begin{enumerate}
\item[(i)] If $0<E_{\Om,V} (u_0 )-\ell_\Om (u_0)\le \frac{\ga^2}{2}J(0)$, 
then from (\ref{J(t)-Csin}), we see $J(\tau)\le 0$   when $\tau=\frac{1}{2\ga}(\frac{3\pi}{2}-\beta)$. 
Since $\beta\in (0,\pi)$, 
 $\tau\in (\frac{\pi}{4\ga}, \frac{3\pi}{4\ga} ) $.  
Since $J(0)>0$, by mean value property, $J(t)$ has a zero 
in $(0, \frac{3\pi}{4\ga})$. 

\item[(ii)] If $E_{\Om,V} (u_0 )-\ell_\Om (u_0)=0 $, then $C\ge J(0)$. 
 We have from (\ref{J(t)-Csin})
\[  J(t)=C\sin (2\ga t+\beta)\,,
\]
where we see there is $\tau\in (0, \frac{\pi}{2\ga})$ s.t. $\sin (2\ga t+\beta)=0$.
 This $\tau$ is the first positive zero of $J(t)$. 

\item[(iii)] If $E_{\Om,V} (u_0 )-\ell_\Om (u_0)<0 $,  then 
  $\sin (2\ga \tau + \beta)=0 $ when  $\tau=\frac{1}{2\ga}(\pi-\beta)$.  
This implies $J(\tau)<0$ for $\tau\in (0, \frac{\pi}{2\ga}) $. 
Therefore, since $J(0)>0$, there exists  some zero of $J(t) $ in  $(0,\frac{\pi}{2\ga}) $ by mean value property. 
\end{enumerate}
Summarizing above, we see that under the condition (a) or (b) in Lemma 4.1, 
there must exist $T_*\in (0,\frac{3\pi}{4\ga})$ such that $J(t)\to 0$ as $t\to T_*\,$.  
\end{remark} 

\section{Blowup for RNLS \eqref{e:nls-H-rot}}\label{s:pf-Thm:VLom} 
\subsection{Proof of Theorem \ref{t:gwp-blowup-Om}}
The proof of part (a) of Theorem \ref{t:gwp-blowup-Om} relies on the sharp Gagliardo-Nirenberg inequality, where the optimal constant is evaluated in terms of  the ground state $Q=Q_{\lam,1}$ as given in (\ref{e:grstQ-masscri}). 
Part (b) of Theorem \ref{t:gwp-blowup-Om} follows as an application of Lemma \ref{l:J-J'-blup}. 
We begin with the following classical lemma on the Pohozaev identity. 

\setcounter{subsection}{0}

\begin{lemma}\label{l:pohozaev-Qp}   %
Let $p\in (1,1+4/(n-2))$. Suppose $u\in H^1$ is a positive solution of the elliptic equation 
\begin{align}
a \De u+ b u^{p}=cu\,, \label{e:u-elliptic_abc}
\end{align} 
where $ac>0$ and $bc>0$. 
Then 
\begin{align}\label{po:deU-p+1}
&2a\int |\nabla u|^2= bn \frac{p-1}{p+1}\int |u|^{p+1}\\
&2c\int | u|^2=b \left(2-\frac{n(p-1)}{p+1}\right) \int |u|^{p+1} \notag\\
& a\left(\frac{2(p+1)}{n(p-1)}-1\right) \int |\nabla u|^2= c\int |u|^2\,. \notag
\end{align}
The solution of (\ref{e:u-elliptic_abc}) will be denoted $u_{\lam,\om}=Q_{\lam,\om}$ when 
  $a=\frac12$, $b=\lam$  and $c=\om$. 
  Equation \eqref{e:u-elliptic_abc}  enjoys the following scaling invariance: 
\begin{enumerate}
\item[(i)] $\om$-scaling: $ u_{\lam,\om}(x)=\om^{1/(p-1)} u_{\lam,1}(\om^{1/2} x)$
\item[(ii)] $\lam$-scaling: $  u_{\lam,\om}(x)=\lam^{-1/p} u_{1,\lam^{-1/p}\om}(\lam^{1/2p} x) $.
\end{enumerate}\end{lemma}  
\begin{proof}  Multiplying 
$x\cdot \nabla u$ and then integrating on both sides of  (\ref{e:u-elliptic_abc}),  we obtain  
\begin{align*}  a(\frac{2}{n}-1 ) \int |\nabla u|^2= c\int |u|^2-\frac{2b}{p+1} \int |u|^{p+1} . 
\end{align*} 
This and the obvious equation 
\begin{align*}
a\int |\nabla u|^2 -b\int |u|^{p+1}=-c\int  |u|^2 
\end{align*}
combine to prove the the three identities 
in the lemma. The statement on the scaling invariance 
follows from the uniqueness of the positive ground state solution $Q_{\lam,\om} $, cf. \cite{Kw89}.
\end{proof}
For $a=\frac12$, $\lam>0$, $c=1$ the   equation (\ref{e:grstQ-masscri}) 
 has a unique positive radially symmetric solution $Q= Q_{\lam,1}$  in $H^1\cap C^2$ 
 which is decreasing  exponentially,  see 
 \cite{Kw89, Wein83}. 
 If $p=1+4/n$,  
 we have $\norm{Q_{\lam,1}}_2^{4/n}  = \lam^{-1}\Vert Q_{1,1}\Vert_{2}^{4/n} $, and by Lemma \ref{l:pohozaev-Qp},
 $Q=Q_{\lam,1}(x) $ satisfies 
\begin{equation}\label{e:pohoz-Qlam}
\int |\nabla Q_{\lam,1}|^2 = \frac{2\lam n}{n+2}  \int | Q_{\lam,1} |^{2+4/n}\, .
\end{equation} 
This is equivalent to 
\begin{align}  E_{0,0}(Q)=0,  \label{lowest-energy_Q}  
\end{align}
where  \begin{align*}
E_{0,0}(u)=\frac12\int |\nabla u|^2-\frac{2\lam}{p+1}\int |u|^{p+1}\,.
\end{align*}

Next we turn to the sharp Gagliardo-Nirenberg inequality, which states that  
there exists an optimal constant $c_{GN}=c(p,n)$ such that for all $u$ in $H^1$  
\begin{align*}
\int |u|^{p+1} \leq c_{GN} \norm{ u}_2^{2-\frac{(p-1)(n-2)}{2}} \norm{\nabla u}_2^{n(p-1)/2} \,.
\end{align*} 
If  $p=1+2\sigma$,  the  constant $c_{GN}\inv$ is given by the minimal of the $J$-functional:
 \begin{align}\label{c_gn-Qp}
& c\inv_{GN}=\min_{0\ne u\in H^1} J_{\sigma,n}(u)\,,
  \end{align} 
 where 
 \begin{align}\label{J_sn[u]}
J_{\sigma,n}(u)=
\frac{ \Vert u\Vert_2^{2+2\sigma-n\sigma } \Vert \nabla u\Vert_2^{n\sigma}}{\Vert u\Vert_{2\sigma+2}^{2\sigma+2}}\,.
\end{align}
Note that $J_{\sigma,n}$ has the scaling invariance $J_{\sigma,n}(\beta u(\al x))=J_{\sigma,n}(u)$ for all $\al,\beta>0$.
Thus  $Q=Q_{\lam,1}$ is a minimizer of $J_{\sigma,n}$.  
From Lemma \ref{l:pohozaev-Qp} it follows that 
the sharp constant in (\ref{c_gn-Qp}) 
is given by 
\begin{align}
 c_{GN}\inv=& J_{\sigma,n}(Q)\notag\\  
=& \frac{\lam\Vert Q_{\lam,1}\Vert_2^{2\sigma}}{ 2^{1-\sigma n/2} }
(2-\frac{\sigma n}{\sigma+1})  (\frac{\sigma n}{2\sigma+2-\sigma n})^{\sigma n/2}\,.\label{GN_half-bc} 
\end{align}
In particular,  if $p=1+4/n$, then 
we have
\begin{align}
&c_{GN}\inv= \frac{2\lam n}{n+2}\Vert Q_{\lam,1}\Vert_2^{4/n}
=\frac{2n}{n+2}\Vert Q_{1,1}\Vert_2^{4/n}\,, 
\label{e:c-GN-Qd}
\end{align}
 where 
note that  $u=Q_\lam=Q_{\lam,1}$ is the solution of (\ref{e:grstQ-masscri}).

Summarizing the above gives the following classical lemma. 
\begin{lemma}\label{l:sharp-GN-ineq-Q} Let $p\in (1,1+\frac{4}{n-2})$, $\sigma=(p-1)/2$ and   $\theta=
 {n}(\frac12-\frac{1}{p+1}) $.  Then
\begin{enumerate} 
\item[(a)]  
\begin{align}
&\norm{u}_{p+1}^{p+1}\le c_{GN} \norm{ u}_2^{(1-\theta)(p+1)}\norm{\nabla u}_2^{\theta(p+1)}  ,\label{GN:Up+1}
\end{align}
where 
   $c_{GN}$ is defined as in (\ref{GN_half-bc}):
\begin{align}
& c_{GN}\inv 
= \frac{\lam\Vert Q_{\lam,1}\Vert_2^{2\sigma}}{ 2^{1-\sigma n/2} }
(2-\frac{\sigma n}{\sigma+1})  (\frac{\sigma n}{2\sigma+2-\sigma n})^{\sigma n/2}\notag\\
=&\frac{\Vert Q_{1,1}\Vert_2^{2\sigma}}{ 2^{1-\sigma n/2} }
(2-\frac{\sigma n}{\sigma+1})  (\frac{\sigma n}{2\sigma+2-\sigma n})^{\sigma n/2}\,.\label{GN_half-1-1} 
\end{align} 
\item[(b)] In particular, if $p=1+4/n$, 
\begin{align}
&\norm{u}_{2+4/n}^{2+4/n}\le c_{GN}  \norm{ u}_2^{4/n} \norm{\nabla u}_2^2,\label{e:GN-deu-u2}
\end{align}
where $c_{GN}$ is given by  \eqref{e:c-GN-Qd}. 
\end{enumerate}
\end{lemma}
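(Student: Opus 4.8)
The plan is to deduce the lemma directly from the variational characterization \eqref{c_gn-Qp}--\eqref{J_sn[u]} of the sharp constant, combined with the Pohozaev identities of Lemma~\ref{l:pohozaev-Qp}. First I would recall the classical fact (Weinstein \cite{Wein83}) that the infimum $\min_{0\ne u\in H^1}J_{\sigma,n}(u)$ is attained; that, since $J_{\sigma,n}$ is invariant under $u\mapsto\beta u(\al\cdot)$, a minimizer may be normalized so as to solve the Euler--Lagrange equation in the form \eqref{e:u-elliptic_abc} with $a=\tfrac12$, $b=\lam$, $c=\om>0$; and that via the $\om$- and $\lam$-scalings (i)--(ii) every such minimizer is transported to $Q=Q_{\lam,1}$, the unique positive radial solution of \eqref{e:grstQ-masscri}. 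This already gives $c_{GN}\inv=J_{\sigma,n}(Q)$.

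The second step is to evaluate $J_{\sigma,n}(Q)$ in closed form. Here I would substitute the three identities of Lemma~\ref{l:pohozaev-Qp}, taken with $a=\tfrac12$, $b=\lam$, $c=1$, into the definition \eqref{J_sn[u]}: both $\int|\nabla Q|^2$ and $\int|Q|^2$ become explicit rational multiples of $\int|Q|^{p+1}$, the factor $\int|Q|^{p+1}$ cancels, and collecting the powers of $2$ and of $\sigma n$ produces exactly \eqref{GN_half-bc}, i.e. the first line of \eqref{GN_half-1-1}. To reach the $\lam$-free second line one observes that $c_{GN}$ depends only on $(p,n)$, so the $\lam$-dependence of the first expression is spurious; concretely, chaining (i)--(ii) yields $\|Q_{\lam,1}\|_2=\lam^{-1/(p-1)}\|Q_{1,1}\|_2$, whence $\lam\|Q_{\lam,1}\|_2^{2\sigma}=\|Q_{1,1}\|_2^{2\sigma}$ and the prefactor $\lam$ drops out. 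Unwinding $J_{\sigma,n}$ and using $\theta(p+1)=n\sigma$, $(1-\theta)(p+1)=2+2\sigma-n\sigma$ then gives the inequality \eqref{GN:Up+1}, which is part (a).

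For part (b) I would specialize to $p=1+4/n$, so $\sigma=2/n$ and $\sigma n/2=1$. Then $\theta=n(\tfrac12-\tfrac1{p+1})=\tfrac n{n+2}$, hence $(1-\theta)(p+1)=4/n$ and $\theta(p+1)=2$, so \eqref{GN:Up+1} becomes \eqref{e:GN-deu-u2}; and substituting $\sigma n=2$ into \eqref{GN_half-bc} collapses the product to $c_{GN}\inv=\tfrac{2\lam n}{n+2}\|Q_{\lam,1}\|_2^{4/n}$, which together with $\lam\|Q_{\lam,1}\|_2^{4/n}=\|Q_{1,1}\|_2^{4/n}$ (the normalization recorded just above \eqref{e:pohoz-Qlam}) gives \eqref{e:c-GN-Qd}; this is also equivalent to \eqref{lowest-energy_Q}, i.e. $E_{0,0}(Q)=0$.

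I expect the only real work to be bookkeeping: keeping the exponents in $J_{\sigma,n}$, in the three Pohozaev relations, and in $\theta$ mutually consistent, and checking that the two scaling freedoms (i)--(ii) combine so that the final constant is genuinely independent of $\lam$. There is no analytic subtlety beyond the attainment of the infimum in \eqref{c_gn-Qp}, which is standard.
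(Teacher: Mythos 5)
Your proposal is correct and follows essentially the same route as the paper: the sharp constant is identified as $J_{\sigma,n}(Q_{\lam,1})$ via Weinstein's attainment result together with the scaling invariance of $J_{\sigma,n}$ and of equation \eqref{e:u-elliptic_abc}, and the value \eqref{GN_half-bc}--\eqref{e:c-GN-Qd} is then computed from the Pohozaev identities of Lemma \ref{l:pohozaev-Qp}, with the $\lam$-independence coming from $\lam\|Q_{\lam,1}\|_2^{2\sigma}=\|Q_{1,1}\|_2^{2\sigma}$. The only cosmetic point is that $\int|Q|^{p+1}$ does not cancel outright in $J_{\sigma,n}(Q)$; a residual power survives and is re-expressed as $\|Q\|_2^{2\sigma}$ through the same Pohozaev relations, exactly as in the paper's bookkeeping.
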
 

Now  we are ready to prove Theorem \ref{t:gwp-blowup-Om}. 

 \begin{proof}[Proof of Theorem \ref{t:gwp-blowup-Om}] (1) Let $u\in C\left( [0, T_{max}), \sH^1\right) $ be the solution of equation \eqref{e:nls-H-rot}. 
Let $Q=Q_{\lam,1}$ be the g.s.s of (\ref{e:grstQ-masscri}).  From    (\ref{e:Eu-Vom}) and Lemma \ref{l:sharp-GN-ineq-Q} we have 
\begin{align*}
&\frac12\int |\nabla u|^2 +\frac{\ga^2}{2}\int |x|^2 |u|^2- E_{\Om,V} (u_0) +\ell_\Om(u_0)
\le\frac{1}{2}\norm{Q}_2^{-4/n} \norm{\nabla u}^2_2\norm{u}_2^{4/n}.
\end{align*}

Then with $\ga>0, \lam>0$  in \eqref{e:nls-H-rot}, we obtain 
\begin{align*}
&\frac12\left( 1-\big(\frac{\Vert  u_0\Vert_2}{\norm{ Q}_2}\big)^{4/n} \right)\int |\nabla u |^2+\frac{\ga^2}{2}\int |x|^2 |u |^2\\
\le& E_{\Om,V}(u_0)-\ell_\Om(u_0) .
\end{align*}
 Since $\Vert u_0\Vert_{2} <\Vert Q\Vert_{2} $, it follows that  
    $\norm{\nabla u(t)}_2 $ and $\norm{x u(t)}_2$  are uniformly bounded for all  $t \in [0, T_{max})$.  
Therefore $T_{max}=\iy$, in view of Proposition \ref{pr:gwp-lwpUav} (c), 
 which proves part (a). 


 (2) To prove the second part we consider the following initial data 
\begin{align}\label{Q:Calpha}
&u_0(x)= c\al^{n/2} Q(\al x)    
\end{align} with $ c\ge 1$, $\al>0$. 
 Then $\Vert u_0\Vert_{2}=  c\norm{Q}_2$. 
  It is easy to calculate  with $p=1+4/n$
  \begin{align*}
   &E_{\Om,V}(u_0)-\ell_\Om(u_0)
 \begin{cases}
=\frac{\ga^2}{2}J(0) & \text{if $c=1$},\\
<\frac{\ga^2}{2}J(0)&\text{if $c>1$}. 
\end{cases}  
 \end{align*}
  Indeed,  we have from \eqref{lowest-energy_Q}  
\begin{align*}
  &E_{\Om,V}(u_0)-\ell_\Om(u_0) =\frac12\int |\nabla u_0|^2+\frac{\ga^2}{2}\int |x|^2 |u_0|^2-\frac{2\lam}{p+1}\int |u_0|^{p+1}\\
 =& \frac{c^2\al^2}{2} \int |\nabla Q (x)|^2 + \frac{c^2\ga^2}{2\al^2}\int |x|^2 | Q(x)|^2
 -\frac{2\lam}{p+1} c^{p+1}\al^{2} \int |Q(x)|^{p+1}\\
 \le& c^2\al^2 E_{0,0}(Q) +\frac{c^2\ga^2}{2\al^2}\int |x|^2 | Q(x)|^2 =\frac{\ga^2}{2}J(0) ,\quad 
 \end{align*} 
where we note   $J(0)=\int |x|^2 |u_0|^2=\frac{c^2}{\al^2}\int |x|^2 |Q|^2$. 
 Therefore, according to Lemma \ref{l:J-J'-blup} (a),  
we conclude that  the solution $u(t)$ of (\ref{e:nls-H-rot}) corresponding to the given $u_0$
blows up on $[0,T_*)$ for some $T_*\in (0,\frac{3\pi}{4\ga})$. 
   \end{proof}

 The following proposition provides a blowup result 
as an application of  Lemma \ref{l:J-J'-blup} in the mass supercritical regime.
\begin{proposition}\label{p:blowup-masssup} 
Let $p\in (1+4/n, 1+4/(n-2))$ and $\lam>0$. Let $Q$ be the g.s.s in \eqref{e:grstQ-masscri}. 
 Then  there exists $u_0$ 
such that the corresponding solution 
$u$ of \eqref{e:nls-H-rot} blows up in finite time. 
\end{proposition}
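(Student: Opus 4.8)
The plan is to produce explicit initial data whose dilation makes the ``magnetic energy'' $E_{\Om,V}(u_0)-\ell_\Om(u_0)$ strictly negative, so that part (c) of Lemma~\ref{l:J-J'-blup} applies at once.

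First I would take, for a dilation parameter $\al>0$,
\[
u_0=u_0^{(\al)}:=\al^{n/2}Q(\al x),
\]
where $Q=Q_{\lam,1}$ is the positive radial solution of \eqref{e:grstQ-masscri}; since $Q$ decays exponentially we have $u_0^{(\al)}\in\sH^1$ for every $\al>0$, and $\norm{u_0^{(\al)}}_2=\norm{Q}_2$. Because $Q$ is radial, $L_z u_0=0$, hence $\ell_\Om(u_0)=-\Om\int\bar u_0\,L_z u_0=0$ and also $J'(0)=2\Im\int x\,\bar u_0\cdot\nabla u_0=0$. A change of variables in \eqref{e:Eu-Vom} then gives
\begin{align*}
E_{\Om,V}(u_0^{(\al)})-\ell_\Om(u_0^{(\al)})
=\frac{\al^2}{2}\int|\nabla Q|^2+\frac{\ga^2}{2\al^2}\int|x|^2|Q|^2-\frac{2\lam}{p+1}\,\al^{\frac{n(p-1)}{2}}\int|Q|^{p+1}.
\end{align*}

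The hypothesis $p>1+4/n$ is exactly $\frac{n(p-1)}{2}>2$, so as $\al\to\iy$ the negative nonlinear term, of order $\al^{n(p-1)/2}$, dominates both the kinetic term (order $\al^2$) and the potential term (order $\al^{-2}$, which tends to $0$). Consequently there is $\al_0=\al_0(n,p,\lam,\ga)$ with
\[
E_{\Om,V}(u_0^{(\al)})-\ell_\Om(u_0^{(\al)})<0\qquad\text{for all }\al\ge\al_0,
\]
which is condition (c) of Lemma~\ref{l:J-J'-blup}. That lemma then yields a finite $T_*>0$ with $\norm{\nabla u}_2\to\iy$ as $t\to T_*$ for the solution $u$ of \eqref{e:nls-H-rot} with this initial datum, the existence of $u$ on its maximal interval being guaranteed by Proposition~\ref{pr:gwp-lwpUav}.

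There is no genuine obstacle here: the proposition is essentially a corollary of Lemma~\ref{l:J-J'-blup}(c), and the only step needing care is the exponent comparison $\frac{n(p-1)}{2}>2$, that is, mass-supercriticality, which is precisely what drives the dilated energy negative. As an alternative one could instead fix $\al$ so that $E_{\Om,V}(u_0)-\ell_\Om(u_0)=0$ (possible by continuity, since the quantity above is positive for small $\al$ and negative for large $\al$) and multiply $u_0$ by a Gaussian phase $e^{-ib|x|^2}$ with $b>0$ to force $J'(0)<0$, invoking part (d) of Lemma~\ref{l:J-J'-blup}; but the route through (c) is cleaner and needs no phase modulation.
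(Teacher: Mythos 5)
Your proposal is correct and follows essentially the same route as the paper: both arguments simply exhibit explicit data with $E_{\Om,V}(u_0)-\ell_\Om(u_0)<0$ and invoke Lemma~\ref{l:J-J'-blup}(c). The only difference is the choice of test family — you use the $L^2$-invariant dilation $u_0=\al^{n/2}Q(\al x)$ with $\al$ large (so your datum even sits at the fixed mass level $\norm{Q}_2$), whereas the paper takes $u_0=c\,e^{i\nu|x|^2}Q(x)$ with $c$ large and uses the Pohozaev identity \eqref{po:deU-p+1}; in either case mass-supercriticality $\tfrac{n(p-1)}{2}>2$ (respectively $c^{p+1}\gg c^2$) is what drives the reduced energy negative.
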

\begin{proof} Take  
 \begin{align}\label{E:u0-enux2Q}
&u_0(x)=c e^{i\nu |x|^2}Q(x), \quad\text{$\nu\in\R$ and $ c>0$}.
 \end{align}
A straightforward calculation shows that  $J'(0)=
 4\nu c^2\int |x|^2|Q|^2$ and 
 \begin{align*} 
   E_{\Om,V}(u_0)-\ell_\Om(u_0)
 =& \frac{c^2}{2}(4\nu^2+\ga^2)\int |x|^2 |Q|^2+\frac{c^2}{2}\int |\nabla Q|^2-\frac{2\lam c^{p+1}}{p+1}\int |Q|^{p+1}\\
=:&h(c).
 \end{align*}
Let $c_0:=c_0(\nu,p)>0$ be the (unique) positive zero of $h(c)$. 
We have, if  $c>c_0$, then   
  \begin{align}\label{J0-J'(0)_supcri}
  E_{\Om,V} (u_0 )-\ell_\Om (u_0)<0\, ;
 \end{align}
and if $c=c_0$ and $\nu<0$, then $E_{\Om,V} (u_0 )-\ell_\Om (u_0)=0$ and $J'(0)<0$. 
We conclude that there exist finite time blowup solutions according to Lemma \ref{l:J-J'-blup} (c) and (d) respectively. 
\end{proof} 

\begin{remark}
 If $p=1+4/n$, $\norm{u_0}_2=\norm{Q}_2$, then 
 the so-called $\mathcal{R}$-transform, 
 yields all blowup solutions modular symmetries (scaling, translation and phase invariance)
 with initial data given by either $Q$ or for $\nu>0$ 
\begin{align}
& u_0(x)= \nu^{n/2}e^{i\nu} e^{-i \frac{\nu}{4} |x|^2}  Q(\nu x), 
\end{align} 
  see \cite[Subsection 4.1]{BHZ19a}. 
Note that $Q$  is the profile for blowup,   
while the standing waves  $u=e^{i t}Q_{\Om,\ga}$  apparently are global solutions, cf.  \eqref{EOmV:minimizer-c}.
\end{remark}
  

\begin{remark} 
In the case $\Om=0,\ga\ne 0$, 
  Carles \cite{Car02c} also tells that $Q$ is the critical minimal mass blowup.
However, the result is obtained by lens transform and Merle's characterization for the profile, 
while our proof relies on a virial identity for \eqref{e:nls-H-rot}, which seems more direct and simpler. 
In particular, the blowup condition in Lemma \ref{l:J-J'-blup}
is  sharper than those in \cite[Corollary 4.3 (ii)]{Car02c} for that case 
and 
 \cite{BHIM}  for 
 mNLS.  
\end{remark}

\setcounter{tocdepth}{2} %
\setcounter{subsection}{1}

\subsection{Minimal mass of blowup for \eqref{e:nls-H-rot} with $\lam>0$ is $\norm{Q}_2^2\,$ } Let $Q_{\Om,\ga,c}\in \Sigma$ be a ground state solution (g.s.s.) for the minimization problem: 
\begin{align}\label{E:minimizerQc}
E(Q_{\Om,\ga,c})=\inf \{ E_{\Om,V}(u): u\in \Sigma, \norm{u}_2^2=c^2>0 \},  
\end{align}
where $E_{\Om,V}$ is given as in (\ref{e:Eu-Vom}),
also see  \eqref{EOmV:minimizer-c} in Sections \ref{s:grst-exist-Energy} for the definition of g.s.s. in the setting of general nonlinearity.  
Let $p=1+4/n$ and $\lam>0$. 
Theorem \ref{t:gwp-blowup-Om} shows that $\norm{Q}_2^2$ is the threshold of minimal mass blowup if 
$Q$ is  the ground state of (\ref{e:grstQ-masscri}).
This is consistent with 
the profile description for blowup solutions 
\cite{LeNamRou18p}, where it is essentially shown that if $p=3$, $n=2$, then 
as $c^2\nearrow \norm{Q}_2^2$,  
the ground state $Q_{\Om,\ga,c}$ blowup  occurs at the lowest point of the trap $V$, 
that is, if $c=\norm{Q}_2$, there is no minimizer for (\ref{E:minimizerQc}). 
See also  the characterization of blowup profiles in \cite{BHZ19a,Car02c,GuoLY19,GuoSeir14}. 

\subsection{Profile for blowup solutions of focusing RNLS (\ref{e:nls-H-rot})} 
In view of \cite{BHZ19a,Car02c,Me93},   we are able  to apply similar argument and techniques to describe the behavior of minimal mass blowup solutions of \eqref{e:nls-H-rot} at the mass level $\norm{Q}_2$ in the mass-critical case 
by showing that all  blowup solutions  result from concentration of energy as $t\to T$. 
Let $u_0\in \sH^1$ and $\Vert u_0\Vert_2=\Vert Q\Vert_2$. If  $u\in C([0,T), \sH^1)$ is the blowup solution of \eqref{e:nls-H-rot} with $A=\Om\la -x_2,x_1,0\dots,0\ra$,  $V=\frac{\ga^2}{2}|x|^2$, then 
 there exist functions 
$x(t)\in \R^n $ and $\theta(t)\in\R$ so that the following holds in $\mathscr{H}^1$:
\begin{align*}
&u(t,x)= \frac{1}{\lam(t)^{n/2}}Q\left(\frac{x-x(t)}{\lam(t)}\right)e^{i\theta(t)}+o(1)\quad as\; t\to T.
\end{align*} 
Here $Q\in H^1(\R^n)$ is given by  \eqref{e:grstQ-masscri} and $\lam(t)=\Vert \nabla Q\Vert_2/\Vert \nabla u(t)\Vert_2 $.
However, such behavior of collapse is unstable at the ground state level, 
see \cite{BHZ19a,MR05}.  This is in consistence with the stability result at $p=1+4/n$ where $Q$ is the threshold,
as is shown in Theorem \ref{t:gwp-blowup-Om} and  Theorem \ref{t:stabi-masscr3}. 
 We shall prove Theorem \ref{t:stabi-masscr3} in Sections \ref{s:grst-exist-Energy} and \ref{s:orb-stab-u} 
  that when the initial data is below the ground state level, 
namely, $c<\norm{Q_{\lam,1}}_2$, 
then \eqref{e:nls-H-rot} is orbital stable. 
The analogous result is valid 
 for NLS (\ref{E:nls-Up}) with zero potential. 
Moreover, Theorem \ref{t:gwp-blowup-Om} shows 
 the solution  is strongly unstable when $M(u_0)=M(Q)$, where $\Vert \nabla u(t)\Vert_2\approx\frac{\Vert \nabla Q\Vert_2}{T-t}$ 
 as $t\to T$.  The  question concerning blowup rate  above the ground state level has been briefly discussed in (\ref{log-log_RNLS}), Section \ref{s:intro}.

\section{Virial identity for inhomogeneous RNLS}\label{s:Virial-inhom}
In this section, we  prove a virial identity for (\ref{e:psi-VomLz}), which 
will allow us to show a similar blowup result in the case of an inhomogeneous nonlinearity. 
In particular, we shall prove Theorem \ref{sharpQ:inhom}, where we determine
the sharp threshold of (\ref{e:psi-VomLz}) for global existence and blowup.  
Consider the inhomogeneous RNLS (\ref{e:psi-VomLz}), 
where $p\in [1,1+4/(n-2))$ and $\lam=\lam(x)$  satisfies the conditions in \mbox{Hypothesis \ref{mu-hypo}} throughout this section. 

\vspace{.20in}
\begin{hypothesis}\label{mu-hypo} Assume $\lam\in C^1(\R^n)$ satisfies the following:
\begin{enumerate}
 \item[(i)] $\lam$ is radially symmetric
\item[(ii)] $\lam_{min}\le \lam(x)\le \lam_{max}$ 
\item[(iii)] $x\cdot\nabla \lam(x)\le 0$, 
\end{enumerate} 
where we write $\lam_{min}=\inf_x \lam(x)\ge 0$ and $\lam_{max}=\sup_x\lam(x)>0$.
\end{hypothesis} 
Examples of $\lam's$ satisfying the hypothesis include, e.g. $\lam=\lam_0+\la x\ra^{-m}$, where $\lam_0>0$, $m\ge 0$.

 \subsection{Conservation laws  for equation (\ref{e:psi-VomLz})}
 Let  $p\in (1,1+4/(n-2) )$ and $\lam=\lam(x)$ as above. Let $V=\frac{\ga^2}{2}|x|^2$ and $ L_\Om= -\Om L_z$ be the same notation as for 
(\ref{e:nls-H-rot}). Suppose $u_0\in \Sigma$ and $u$ is the corresponding solution of \eqref{e:psi-VomLz}. 
Following the same line of proof as in the case $\lam$ being a constant, one can show 
the local wellposedness and blowup alternative theorem like Proposition \ref{pr:gwp-lwpUav}. 
Moreover, the mass, energy and angular momentum are conserved in the lifespan $I=(-T_{min},T_{max})$:
\begin{align}  
M(u)=&\int |u|^2=M(u_0)\notag\\
\td{E}_{\Om,V}(u)=&\frac12\int |\nabla u|^2+\int V | u |^2- \frac{2 }{p+1}\int \lambda(x) |u|^{p+1} +\ell_\Om( u)\label{E-lam(x)}\\
=&\td{E}_{\Om,V}(u_0) \notag\\
 \ell_\Om(u)=& -\Om\int \overline{u} {L_z} u= \ell_\Om(u_0).\notag
\end{align}  

\subsection{The $\td{J}$-functional} 
Define the $\td{J}$-functional  in $H^1\setminus \{0\}$ associated to $\lam$
 \begin{align}\label{J_sn:inhomo} 
\td{J}(u):=\td{J}_{\sigma,n,\lam}(u)=
\frac{ \Vert u\Vert_2^{2+2\sigma-n\sigma } \Vert \nabla u\Vert_2^{n\sigma}}{\Vert \td{\lam} u\Vert_{2\sigma+2}^{2\sigma+2}}\,,
\end{align}
where $\td{\lam}=\lam^{1/(p+1)}$,
cf. \eqref{J_sn[u]}.  Then it is easy to note that the minimum of $\td{J}$ exists, namely,
there exits  $\td{j}(\sigma,n,\lam)>0$ such that 
\begin{align}
& \td{j}(\sigma,n,\lam)=\min_{0\ne u\in H^1} \td{J}_{\sigma,n,\lam}(u).  
\end{align} 

\begin{lemma}\label{j_lam_max} Let  $p=1+2\sigma\in (1,1+\frac{4}{n-2})$. Let  $\lam=\lam(x)$ satisfy Hypothesis \ref{mu-hypo}. 
We have 
\begin{align}\label{sharpC:lam_max}
 \td{j}(\sigma, n,\lam) =\td{j}(\sigma, n,\lam_{max}) \,.
\end{align} 
\end{lemma}
\begin{proof} First, we have from the definition of $\td{J}$
\begin{align}\label{j:min-max}
&\td{j}(\sigma, n,\lam_{max})\le \td{j}(\sigma, n,\lam)\le \td{j}(\sigma, n,\lam_{min})\,.
\end{align}
Next, we show 
\begin{align}\label{j:lam_max}
&\td{j}(\sigma, n,\lam_{max})\ge \td{j}(\sigma, n,\lam)\,.
\end{align}
In fact, take a localizing sequence $\{u_\veps\}=\{ \frac{1}{\veps^{n/2}} u(\frac{x}{\veps})  \}$ around $x_0=0$,
where $\veps=\veps_k>0$ and 
 $u={Q}_{\lam_{max}}$ is the ground state solution of (\ref{e:grstQ-masscri}).   
Then, 
we observe that  
as $\veps\searrow 0$\,,
\begin{align*}
&\td{J}_{\sigma,n,\lam}(u_\veps) 
=\frac{ \Vert u\Vert_2^{2+2\sigma-n\sigma } \Vert \nabla u\Vert_2^{n\sigma}}{ \int \lam(\veps x) |u|^{2\sigma+2} }   \\
\to&\frac{ \Vert u\Vert_2^{2+2\sigma-n\sigma } \Vert \nabla u\Vert_2^{n\sigma}}{\lam_{max} \Vert u\Vert_{2\sigma+2}^{2\sigma+2}}\,
=\td{j}(\sigma, n,\lam_{max})\,,
\end{align*}
which proves (\ref{j:lam_max}). 
Combining (\ref{j:min-max}) and (\ref{j:lam_max})  yields (\ref{sharpC:lam_max}). 
\end{proof} 

A corollary of Lemma \ref{j_lam_max} follows. 
\begin{corollary}\label{c:GN:lam(x)} 
Let $p\in (1,1+\frac{4}{n-2})$. 
There is a  sharp constant $\td{c}_{GN}>0$ such that for all $u$ in $H^1$
\begin{align}\label{GN:lam(x)}
\int \lam(x)|u|^{p+1} \le \td{c}_{GN} \norm{ u}_2^{2-\frac{(p-1)(n-2)}{2}} \norm{\nabla u}_2^{n(p-1)/2} \,,
\end{align} 
where  the  constant $\td{c}_{GN}$ is given by 
 \begin{align}\label{c_gn-Qp}
& \td{c}\inv_{GN}=j(\sigma,n,\lam_{max}).
  \end{align} 
In particular, 
Note that  if $p=1+4/n$,  
\begin{align}\label{lam:c_GN}
&\td{c}_{GN}\inv=\lam_{max}\inv{c}_{GN} =\frac{2 n}{n+2}\Vert Q_{\lam_{max}}\Vert_2^{4/n}
\end{align}
by (\ref{GN:Up+1}) and \eqref{e:c-GN-Qd}.  
\end{corollary} 


\begin{remark} We have seen that the scaling invariance 
is lost for $\td{J}_{\sigma,n,\lam}$ when $\lam$ is non-constant. Such symmetry breaking has led to technical difficulties and non-existence of a minimizer for $\td{J}_{\sigma,n,\lam}$. 
At first one's attention is drawn on the plausible 
inequality $ \td{j}(\sigma, n,\lam) >\td{j}(\sigma, n,\lam_{max})$:  For $p=1+4/n$,  
\begin{align}
&  \frac{ \Vert \nabla Q_{\lam_{max}}\Vert_2^{4/n } \Vert Q_{\lam_{max}}\Vert_2^2}{\Vert\td{\lam}  Q_{\lam_{max}}\Vert_{2+4/n}^{2+4/n}}
> \frac{ \Vert \nabla Q_{\lam_{max}}\Vert_2^{4/n } \Vert Q_{\lam_{max}}\Vert_2^2}{\lam_{max} \Vert Q_{\lam_{max}}\Vert_{2+4/n}^{2+4/n}} \,.
\end{align}
But a limiting argument near the maximum point of $\lam$ 
shows that these two constants are identical, as is seen in the proof of Lemma \ref{j_lam_max}. 
{The moral tells that the inhomogeneous problem (\ref{J_sn:inhomo}) might be subtle and delicate}. 
 
 \end{remark}

\subsection{Localized virial identity for (\ref{e:psi-VomLz})}\label{ss:virial-mu(x)}
Let $\rho(x)=\rho(|x|) $ be radial. Define $J(t):=J_\rho(t)=\int \rho |u|^2$. 
By similar argument as in the proof of  Lemma \ref{l:virial-J(t)-Vom},
 we are able to show the localized virial identity for equation
(\ref{e:psi-VomLz}), whose detailed proof will be omitted.  
The magnetic versions were obtained in \cite{FV09,Gar12} and earlier  \cite{GRib91a} for homogeneous equations. 
\begin{lemma}\label{J:rho-u^pVOm} Let  $p\in [1,1+4/(n-2))$ and $\rho\in C_0^\iy(\R^n)$ be radially symmetric.
If $u$ is a  solution of \eqref{e:psi-VomLz} in  $C^1(I, \Sigma)$, then we have
	\begin{align*}
	&J_\rho'(t) =\Im \int \bar{u}\nabla\rho \cdot \nabla u 
	\end{align*}
and 	\begin{align}
	J_\rho''(t)=& -\frac14\int \Delta^2 \rho |u|^2
	-\frac{p-1}{p+1} \int \lam \Delta \rho |u|^{p+1}
	{+} \int\nabla \bar{u}\cdot \mathrm{Hessian}(\rho)\nabla u \notag\\  
		& -\gamma^2 \int x \cdot \nabla\rho |u|^2
	+ \frac{2}{p+1} \int  \nabla \lam\cdot\nabla\rho  |u|^{p+1}. 
	\end{align}
	\end{lemma}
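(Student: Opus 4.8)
The plan is to reproduce the computation in the proof of Lemma~\ref{l:virial-J(t)-Vom}, with $|x|^2$ replaced by the radial weight $\rho$, and to track the one extra family of terms created by the $x$-dependence of $\lam$. As a preliminary reduction, since $C_0^\infty(\R^n)$ is dense in $\Sigma$ and the flow of \eqref{e:psi-VomLz} depends continuously on the data in $\Sigma$ (by the analogue of Proposition~\ref{pr:gwp-lwpUav} together with Lemma~\ref{l:disp-Stri-L}), it suffices to establish the two identities for $u\in C^1(I,C_0^\infty\cap\Sigma)$ and then pass to the limit, exactly as in Lemma~\ref{l:virial-J(t)-Vom}.

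For the first identity I would differentiate $J_\rho(t)=\int\rho|u|^2$, write $\tfrac{d}{dt}|u|^2=2\Re(\bar u u_t)$, and substitute $u_t$ from \eqref{e:psi-VomLz}. Three of the four resulting terms drop out after taking the imaginary part: the harmonic potential contributes the imaginary part of a real quantity since $V$ is real-valued; the nonlinearity contributes $\Im\int\rho\,\lam(x)|u|^{p+1}$, which vanishes because $\lam$ is real; and the rotation term is real because $L_z$ is self-adjoint and $L_z\rho=0$ for radial $\rho$, so that $\int\rho\,\bar u\,L_z u=\overline{\int\rho\,\bar u\,L_z u}$. Only the kinetic term survives, and one integration by parts turns $-\Im\int\rho\,\bar u\,\Delta u$ into $\Im\int\nabla\rho\cdot\nabla u\,\bar u$, which is $J_\rho'(t)$.

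For the second identity I would differentiate $J_\rho'(t)=\Im\int\nabla\rho\cdot\nabla u\,\bar u$ once more in $t$, substitute $u_t$ again, and process the Hamiltonian term by term. The kinetic part contributes, after two integrations by parts, the free-Schr\"odinger local virial terms $\int\nabla\bar u\cdot\mathrm{Hessian}(\rho)\nabla u-\tfrac14\int\Delta^2\rho\,|u|^2$. The harmonic potential $V=\tfrac{\ga^2}{2}|x|^2$ contributes $-\int\nabla V\cdot\nabla\rho\,|u|^2=-\ga^2\int x\cdot\nabla\rho\,|u|^2$. The nonlinearity $\lam(x)|u|^{p-1}u$ contributes, after integrating by parts, the pair of terms $-\tfrac{p-1}{p+1}\int\lam\,\Delta\rho\,|u|^{p+1}+\tfrac{2}{p+1}\int\nabla\lam\cdot\nabla\rho\,|u|^{p+1}$, the second of which is precisely the new term reflecting that $\lam$ is non-constant and is absent in Lemma~\ref{l:virial-J(t)-Vom}. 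Finally, the rotation term $-\Om L_z u$ produces first-order contributions that cancel in pairs because $\rho$ is radial, exactly as the $T_4$-type $\Om$-terms cancel against the corresponding term in the proof of Lemma~\ref{l:virial-J(t)-Vom}; collecting everything gives the stated formula for $J_\rho''(t)$.

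The main obstacle is the careful verification that the rotation contribution to $J_\rho''$ vanishes for radial $\rho$: one must integrate by parts against the rotation-invariant vector field $\nabla\rho=\rho'(|x|)\,x/|x|$ and use the commutation of $L_z$ with radial multiplication operators, so that the first-order $L_z$ terms reorganize into a quantity equal to minus itself. A secondary bookkeeping point is to keep the correct constants and signs in the kinetic contribution—in particular the factor $-\tfrac14$ in front of $\int\Delta^2\rho\,|u|^2$—which is easiest to check by specializing to $\rho=|x|^2$ and matching \eqref{eJ'':4E(u)OmV}. As noted in the paper and in \cite{FV09,Gar12,GRib91a}, an alternative is to derive the same identity from the hydrodynamic (Madelung) formulation of \eqref{e:psi-VomLz}.
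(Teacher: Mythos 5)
Your proposal is correct and follows exactly the route the paper intends: the paper omits the detailed proof of Lemma \ref{J:rho-u^pVOm}, stating it is obtained "by similar argument as in the proof of Lemma \ref{l:virial-J(t)-Vom}," which is precisely your plan of repeating that computation with $|x|^2$ replaced by the radial weight $\rho$ and tracking the extra $\nabla\lam\cdot\nabla\rho$ term. The individual contributions you list (kinetic, harmonic, nonlinear, and the vanishing of the $\Om$-terms via radiality of $\rho$ and the commutator structure of $L_z$) all check out and reproduce the stated constants, including the $-\tfrac14\int\Delta^2\rho\,|u|^2$ term.
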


Ansatz  $\rho=|x|^2$ with a limiting argument
gives us the virial identity in the following lemma. 
 \begin{lemma}\label{U:VOm-inhom} 
 Let $p\in [1,1+4/(n-2))$ and $u_0\in \Sigma$. Let  $u$ be the corresponding solution of \eqref{e:psi-VomLz} in $C^1(I,\Sigma)$. 
Then it holds 
\begin{align*}
J''(t)=& 2\int |\nabla u|^2 -2\gamma^2 \int |x|^2 |u|^2\notag\\  
&- 2n\frac{p-1}{p+1} \int \lam  |u|^{p+1}
	+ \frac{4}{p+1} \int x \cdot \nabla\lam |u|^{p+1}\\
=&4\td{E}_{\Om,V}(u_0)+4\Om\int \bar{u}_0(L_z u_0) -4\ga^2\int |x|^2|u|^2\\
&+\frac{2}{p+1}(4-n(p-1)) \int \lam(x)|u|^{p+1}+\frac{4}{p+1}\int x\cdot \nabla\lam |u|^{p+1} .
\end{align*}
\end{lemma}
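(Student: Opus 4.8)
The plan is to read off both identities from the localized virial formula of Lemma \ref{J:rho-u^pVOm} by choosing the weight $\rho(x)=|x|^2$. First I would record $\nabla\rho=2x$, $\De\rho=2n$, $\De^2\rho\equiv0$, $\mathrm{Hessian}(\rho)=2I_n$, $x\cdot\nabla\rho=2|x|^2$ and $\nabla\lam\cdot\nabla\rho=2\,x\cdot\nabla\lam$. Inserting these into the two displays of Lemma \ref{J:rho-u^pVOm} makes the bi-Laplacian term vanish and yields at once $J'(t)=2\,\Im\int x\cdot\nabla u\,\overline{u}$ (as in the homogeneous case) together with the first form
\begin{align*}
J''(t)=2\int|\nabla u|^2-2\ga^2\int|x|^2|u|^2-\frac{2n(p-1)}{p+1}\int\lam\,|u|^{p+1}+\frac{4}{p+1}\int x\cdot\nabla\lam\,|u|^{p+1}.
\end{align*}

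Since Lemma \ref{J:rho-u^pVOm} is stated for $\rho\in C_0^\iy$ while $|x|^2$ is unbounded, the substitution has to be justified by a truncation: apply the lemma to $\rho_R(x)=|x|^2\chi(x/R)$, with $\chi$ a fixed smooth radial cutoff equal to $1$ on $\{|x|\le1\}$, and let $R\to\iy$. Every correction coming from derivatives of $\chi(x/R)$ is supported in $\{R\le|x|\le2R\}$ and, after accounting for the size of $\rho$ there, carries a negative power of $R$; using that on any compact $K\subset I$ the Cauchy theory for \eqref{e:psi-VomLz} gives $\sup_{t\in K}(\norm{\nabla u(t)}_2+\norm{\la x\ra u(t)}_2)<\iy$, all such terms go to $0$ by dominated convergence while the principal parts converge to the displayed integrals. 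As the text remarks, an alternative is to argue from the Madelung/hydrodynamical form of \eqref{e:psi-VomLz} following \cite{Af06,AMS12}.

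For the second, energy form I would substitute the conservation law \eqref{E-lam(x)}, solved for $\int|\nabla u|^2$, into the first form: using $V=\frac{\ga^2}{2}|x|^2$ the two harmonic-potential terms merge into $-4\ga^2\int|x|^2|u|^2$, the $\int\lam|u|^{p+1}$ contributions collect into $\frac{2}{p+1}(4-n(p-1))$, and conservation of energy and angular momentum (the analogues of Proposition \ref{pr:gwp-lwpUav}(d)--(e) for \eqref{e:psi-VomLz}, already recorded in \eqref{E-lam(x)}) replace $E_{\Om,V}(u)$ by $E_{\Om,V}(u_0)$ and $-4\ell_\Om(u)$ by $4\Om\int(L_zu_0)\overline{u_0}$, giving exactly the claimed expression.

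The only real obstacle I expect is the rigorous justification for general $\sH^1$ data: the computation is transparent for $u\in C^1(I,C_0^2\cap\Sigma)$, and passing to arbitrary data in $\Sigma$ needs the a priori $\Sigma$-bound above together with continuity of the flow, exactly as in the proof of Lemma \ref{l:virial-J(t)-Vom}. The only genuinely new feature relative to the homogeneous case is the inhomogeneous nonlinearity: it produces the two extra terms involving $\lam$ and $x\cdot\nabla\lam$, and it is Hypothesis \ref{mu-hypo}(iii), $x\cdot\nabla\lam\le0$, that will later make the $x\cdot\nabla\lam$ term harmless (indeed favourable) in the blowup analysis; one should also note, as in Lemma \ref{l:virial-J(t)-Vom}, that the rotation contribution drops out of the virial because $L_z(|x|^2)=0$ and $x\cdot A=0$, so no new difficulty arises from $L_\Om$.
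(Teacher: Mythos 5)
Your proposal is correct and follows the paper's own route: the paper obtains the lemma precisely by taking the ansatz $\rho=|x|^2$ in the localized virial identity of Lemma \ref{J:rho-u^pVOm} and then rewriting via the conservation laws \eqref{E-lam(x)}, exactly as you do. Your truncation argument for passing from compactly supported $\rho$ to $|x|^2$ is a sound way to justify the step the paper leaves implicit, and your algebra (including the sign bookkeeping giving $+4\Om\int(L_zu_0)\bar u_0$ from $-4\ell_\Om(u_0)$) checks out.
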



An application of Lemma \ref{U:VOm-inhom} yields  the counterpart parts $(a)$-$(d)$ in Lemma \ref{l:J-J'-blup} for equation (\ref{e:psi-VomLz}). For our purpose   we state the blowup criterion in the mass-critical case only. 
\begin{lemma}\label{J-inhomRNLS} Let  $p=1+\frac{4}{n}$.
Suppose $0\ne u_0 \in \Sigma$ satisfies  $J'(0)\le 0$ and 
 $ \td{E}_{\Om,V} (u_0 )-\ell_\Om (u_0)\le \frac{\ga^2}{2}J(0)$, that is $\td{E}_{0,0}(u_0)\le 0$. 
Then, there exists $0<T_*<\iy$ such that 
the corresponding solution  $u$ of equation \eqref{e:psi-VomLz} satisfies  
\begin{align}\label{deU:RNLS}
&\Vert \nabla u(t)\Vert_2 \to\iy \quad as \; t\to T_*\,.
\end{align}
\end{lemma}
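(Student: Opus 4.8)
The plan is to mimic Case 1 of the proof of Lemma \ref{l:J-J'-blup}, but now the ODE for $J(t)$ no longer closes because the coefficient $\int \lam(x)|u|^{p+1}$ and the extra term $\int x\cdot\nabla\lam\,|u|^{p+1}$ are not conserved. First I would set $p=1+\frac4n$, so $4-n(p-1)=0$ and the virial identity from Lemma \ref{U:VOm-inhom} collapses to
\begin{align*}
J''(t)+4\ga^2 J(t) = 4\big(E_{\Om,V}(u_0)-\ell_\Om(u_0)\big) + \frac{4}{p+1}\int x\cdot\nabla\lam\,|u|^{p+1}.
\end{align*}
By Hypothesis \ref{mu-hypo}(iii), $x\cdot\nabla\lam\le 0$, so the last integral is $\le 0$ for all $t$; writing $g(t):=\frac{4}{p+1}\int x\cdot\nabla\lam\,|u(t)|^{p+1}\le 0$ and $K:=4(E_{\Om,V}(u_0)-\ell_\Om(u_0))=4\cdot\frac{\ga^2}{2}\cdot\frac{2}{\ga^2}E_{0,0}\text{-type}$ (more precisely, by the hypothesis $K\le 2\ga^2 J(0)$), we get the differential inequality $J''(t)+4\ga^2 J(t)\le K$.

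Next I would solve this by the integrating-factor / variation-of-parameters representation. Since $J$ satisfies $J''+4\ga^2 J = K + g(t)$ exactly, Duhamel for the harmonic oscillator gives the implicit integral equation
\begin{align*}
J(t) = J(0)\cos 2\ga t + \frac{J'(0)}{2\ga}\sin 2\ga t + \frac{K}{4\ga^2}(1-\cos 2\ga t) + \frac{1}{2\ga}\int_0^t \sin\big(2\ga(t-s)\big)\,g(s)\,ds.
\end{align*}
This is the analogue of the display the authors refer to as (\ref{J<Csin2gamma}). The idea is then: because $g\le 0$, on the interval $t\in[0,\pi/(2\ga)]$ the kernel $\sin(2\ga(t-s))\ge 0$ for $0\le s\le t$, so the last term is $\le 0$; and the explicit part can be written (using $K\le 2\ga^2 J(0)$, together with a Cauchy–Schwarz/uncertainty-principle bound controlling the size of $J'(0)$ relative to $J(0)$ exactly as in the proof of Lemma \ref{l:J-J'-blup}) as $C\sin(2\ga t+\beta)+\frac{K}{4\ga^2}$ with $C\ge \frac{|K|}{4\ga^2}$ and $\beta\in(0,\pi)$, which forces the explicit part to hit $0$ at some $T_0\in[\frac{\pi}{4\ga},\frac{3\pi}{4\ga}]$. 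Combining these two sign facts, $J(t)$ reaches $0$ at some $T_*\le T_0<\infty$. Then the uncertainty principle Lemma \ref{l:UP}, $\|u\|_2^2\le \frac2n\|\nabla u\|_2^2\,J(t)$, together with mass conservation, yields $\|\nabla u(t)\|_2\to\infty$ as $t\to T_*$, proving (\ref{deU:RNLS}).

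The main obstacle is the control of the Duhamel remainder term: one must be sure that the contribution of $\frac{1}{2\ga}\int_0^t\sin(2\ga(t-s))g(s)\,ds$ does not somehow keep $J(t)$ positive, i.e. one must confine attention to a time window where the oscillatory kernel has a definite sign (here $[0,\pi/(2\ga)]$ suffices, since that is where $\sin(2\ga(t-s))\ge0$ for all $0\le s\le t$) and check that the explicit part already forces a zero inside that window when $J'(0)\le 0$, or otherwise handle the sign of $J'(0)$ by the same $C,\beta$ bookkeeping as in Lemma \ref{l:J-J'-blup}. A secondary technical point is justifying the virial computation and the ODE for rough $\sH^1$ data by the usual regularization/approximation argument, and checking that $t\mapsto \int x\cdot\nabla\lam\,|u|^{p+1}$ is continuous so the integral equation makes sense — both routine given Hypothesis \ref{mu-hypo} and the local theory. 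I would finish by remarking, as in Remark \ref{r:E00-T}, that if in addition $J'(0)\le 0$ one gets the sharper bound $T_*\in[\frac{\pi}{4\ga},\frac{\pi}{2\ga})$.
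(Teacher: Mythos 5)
Your proposal is correct and is essentially the paper's own argument: apply the virial identity of Lemma \ref{U:VOm-inhom} at $p=1+\frac4n$, write the Duhamel/variation-of-parameters representation for $J(t)$ (the paper's (\ref{Csin2ga-2ga})), drop the nonpositive inhomogeneous term $f(t)=\frac{4}{p+1}\int x\cdot\nabla\lam\,|u|^{p+1}$ using the sign of the kernel, compare with the homogeneous-case sinusoid, and conclude via the uncertainty principle. The kernel-sign window you worry about is handled most cleanly by evaluating at $t=\pi/(2\ga)$, where $\sin(2\ga(t-s))\ge 0$ for all $0\le s\le t$ and the explicit part equals $-J(0)+\frac{2}{\ga^2}\bigl(E_{\Om,V}(u_0)-\ell_\Om(u_0)\bigr)\le 0$ by the hypothesis, independently of the sign of $J'(0)$.
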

\begin{remark}
An example of $u_0$ satisfying the conditions  $J'(0)= 0$ and
 $\td{E}_{0,0}(u_0)\le 0$ is given by $u_0=Q_{\lam_{min}}$ when $\lam_{min}>0$. 
  Comparing with part (a) in Lemma \ref{l:J-J'-blup},
the extra condition $J'(0)\le 0$ is to ensure $t\in (0,\frac{\pi}{2\ga}]$, see Remark \ref{r:E00-T}. 
\end{remark}

The proof of  Lemma \ref{J-inhomRNLS} is similar to that for  Lemma \ref{l:J-J'-blup} (a). 
We only mention a technical point  concerning dealing with the  inhomogeneous term in the virial identity.  
Following the proof of  Lemma \ref{l:J-J'-blup}, 
instead of (\ref{e:J''+J-EL0p}) we arrive at 
\begin{align}\label{J''+J-inhom} 
 J''(t) +4\ga^2 J(t)=&4\td{E}_{\Om,V}(u_0)-4 \ell_\Om(u_0)+\frac{4}{p+1}\int x\cdot \nabla\lam |u|^{p+1} 
 \end{align}
by applying Lemma \ref{U:VOm-inhom}. 
An integral representation for $J(t)$ is given by
 \begin{align}
J(t) =& (J(0)-\frac{1}{\ga^2}(\td{E}_{\Om,V}(u_0)-\ell_\Om(u_0)) ) \cos 2\ga t + \frac{J'(0)}{2\ga} \sin 2\ga t \notag\\
&+\frac{1}{\ga^2} 
( \td{E}_{\Om,V}(u_0)-\ell_\Om(u_0)) + \int^t_0 \frac{\sin (2\ga(t - s))}{2\ga} f(s) ds\notag\\
=& C \sin( 2\ga t +\beta)+\frac{1}{\ga^2} (\td{E}_{\Om,V}(u_0)- \ell_\Om(u_0))+ \int^t_0 \frac{\sin (2\ga(t - s))}{2\ga} f(s),\label{Csin2ga-2ga}
\end{align} 
where the constant $C$ is given as in (\ref{J(t)-Csin}) and $f(t):=\frac{4}{p+1}\int x\cdot\nabla \lam |u(t,x)|^{p+1}$. 
Since $f(t)\le 0$ by condition (iii) in Hypothesis \ref{mu-hypo}, 
it follows that
 \begin{align}\label{J<Csin2gamma}
J(t)\le& C \sin( 2\ga t +\beta)+\frac{1}{\ga^2} (\td{E}_{\Om,V}(u_0)- \ell_\Om(u_0)).
\end{align} 
Then, this inequality implies  Lemma \ref{J-inhomRNLS}. 


\subsection{Proof of Theorem \ref{sharpQ:inhom}}  
Recall that $Q_\lam=Q_{\lam,1}\in \Sigma$ is the unique positive radial ground state solution of equation  \eqref{e:grstQ-masscri}. 
 Then by virtue of the scaling invariance (i)-(ii) in Lemma \ref{l:pohozaev-Qp}, 
 it is easy to observe that $Q_{\lam}(x) =\lam^{-1/(p-1)} Q_{1,1}( x)$ and so 
 $\norm{Q_\lam}_2^2=\lam^{-n/2} \norm{Q_{1,1}}_2^2$ if $p=1+4/n$. 
In this subsection,  we  prove  Theorem \ref{sharpQ:inhom} for RNLS \eqref{e:psi-VomLz} by 
applying (\ref{GN:lam(x)}) and Lemma \ref{J-inhomRNLS}. 
Let $p=1+4/n$. 
Theorem \ref{sharpQ:inhom} states  that the threshold for 
(\ref{e:psi-VomLz}) is given by $\norm{Q_{\lam_{max}}}$.   

\begin{proof}[Proof of (a) in Theorem \ref{sharpQ:inhom}] 
We will show that if 
$\norm{u_0}_2<\norm{Q_{\lam_{max}}}_2$\,,  then 
$\int |\nabla u|^2+  \int |x|^2 |u|^2\le C$ for all $t$. 
 From (\ref{E-lam(x)}) and (\ref{GN:lam(x)}) we have 
\begin{align}
& \td{E}_{\Om,V}(u_0)-\ell_{\Om}(u_0)= \int \left(\frac12 |\nabla u|^2+V|u|^2-\frac{2}{p+1}\lam(x)|u|^{2+\frac4n}\right) \label{eE:lam(x)}\\
\ge& \int \left(\frac12 |\nabla u|^2+\frac{\ga^2}{2} |x|^2 |u|^2\right)-\frac{2}{p+1}  \td{c}_{GN} \norm{u_0}_2^{4/n} \norm{\nabla u}_2^2 \notag\\
\ge&\ \frac12\left(1-\left(\frac{ \norm{u_0}_2}{\norm{Q_{\lam_{max}}}_2} \right)^{4/n}\right) \int|\nabla u|^2+\int\frac{\ga^2}{2} |x|^2 |u|^2\,. \notag
\end{align}
Thus it holds that for all $t$,
\begin{align*}
& \norm{u(t)}_\Sigma\le C:=C(\td{E}_{\Om,V}(u_0), \ell_\Om(u_0),n,\lam_{max},\ga) \,,
\end{align*} 
provided $\norm{u_0}_2<\norm{Q_{\lam_{max}}}_2 $\,. 



\bigskip
\noindent 
{\em Proof of (b) in Theorem  \ref{sharpQ:inhom}.} \ According to Lemma \ref{J-inhomRNLS}, we only need to check 
$J'(0)=0$ and $\td{E}_{0,0}(u_0)\le 0$.  
Let $u_{0,\al}(x)= C\al^{n/2} Q_{\lam_{max}}(\al x) $, 
where $\al>0$ and $C=(1+\veps_0)^{n/4}$ for arbitrary $\veps_0>0$.   
The first condition is readily verified since  $J'(0)=2\Im \int x\bar{u}_0  \cdot \nabla u_0=0 $ as soon as $u_0$ and $\nabla u_0$ are real-valued. 
Now we look at the second condition.  
Let $p=1+4/n$ and write by (\ref{E-lam(x)})
\begin{align*}
\td{E}_{0,0}(u_{0,\al})=& C^2\al^2 I_{\al}\,,
\end{align*}
where 
\begin{align*}
I_{\al}=&\left(\frac12 \int |\nabla {Q}_{\lam_{max}}|^2-\frac{2}{p+1}\int \lam(\frac{x}{\al}) |{Q}_{\lam_{max}}|^{p+1}\right)\\
-&\frac{2\veps_0}{p+1}\int \lam(\frac{x}{\al})|{Q}_{\lam_{max}}|^{p+1} .
\end{align*} 
Note that as $\al\to\iy$ we have by 
 (\ref{lowest-energy_Q}) 
\begin{align*}
I_\al\to &E_{0,0}(Q_\lam)-\frac{2\veps_0\lam_{max}}{p+1}\int  |{Q}_{\lam_{max}}|^{p+1}\\
=&-\frac{2\veps_0\lam_{max}}{p+1}\int  |{Q}_{\lam_{max}}|^{p+1} <0.  
\end{align*}
So, $I_\al<0$  as soon as $\al$ is sufficiently large. Thus we see that if taking $\al $ large, then 
$\td{E}_{00}(u_0)<0$.  This proves that for any $\veps>0$ there exists $u_0=u_{0,\al}$ for some $\al>0$ 
such  that  (\ref{e:psi-VomLz}) has finite time blowup solution 
with $\norm{u_0}_2=\norm{Q_{\lam_{max}}}_2+\veps$.
\end{proof}

\begin{remark}\label{re:Q} 
The threshold $c_0=\norm{Q_{\lam_{max}}}_2$ seems odd at first, since $\norm{\td{Q}}_2$ seems a more natural candidate 
due to the gap  $ \norm{Q_{\lam_{max}}}_2<\norm{Q_{\lam_{min}}}_2$,  
where $\wtd{Q}\in H^1$ obeys (\ref{tQ:inhomo}),
cf.  \cite{Gen12i,LiuZZhZ06,Me96} for related  results. 
However, the proof of Theorem \ref{sharpQ:inhom} along with the proof of Lemma \ref{j_lam_max} 
 shows that  this is precisely the case. 
 In the blowup setting, there occurs the mass concentration near $x=0$ where $\lam$ attains its maximum, which indicates 
 self-similarity for the blowup profile.
\end{remark} 

\begin{remark}\label{rk:gap-Qmax-min} 
 There remains an open question: Does there exist $u_0$ with $\norm{u_0}_2=\norm{Q_{\lam_{max}}}_2$ such that 
the  flow map $u_0\mapsto u$ blows up in finite time?  
This would require  further examination of   the  behavior of the solution  that is dependent on  the regularity of $\lam$ near a point where 
local maximum is assumed.   
 See \cite{BaCaDuy10,RaSz11jams} for the treatment on the minimal mass problem in the case of an  inhomogeneous NLS under a flatness condition. 
\end{remark}  

\begin{remark}\label{r:geometry}   
The result in Theorem \ref{sharpQ:inhom} might have independent interest. 
It has connections with focusing NLS on a manifold $\mathcal{M}$, 
\begin{align}\label{psi:M} 
i \psi_t =-\frac12\De_{\mathcal{M}} \psi- |\psi|^{\frac4{n}} \psi \,  
\end{align} 
where $\De_{\mathcal{M}}$ is the Laplace-Beltrami operator on $\mathcal{M}$. 
For instance, if 
$\mathcal{M} =\mathbb{H}^n$, $n=2$ denotes the hyperbolic space, then 
\begin{align*} 
&  \De_{\mathcal{M}}=  \pa_r^2+  \frac{\phi'(r)}{\phi(r)} \pa_r+ \frac{1}{\phi(r)^2}\pa^2_\theta \,,
\end{align*} 
 where  
the Riemannian metric is given by $ds^2= dr^2 +\phi(r)^2 d\theta^2$
 with $\phi(r)=
 \sinh r$.  
In the radial case, the transform $\psi(t,r)= u(t,r)  (r/\phi(r))^{1/2}$ converts \eqref{psi:M} to 
\begin{align}
i u_t =-\frac12\De u +V u - \lam(r)|u|^2 u 
\end{align} 
where   $V$ is a quadratic function of $r$  
and $\lam(r)=r/\phi(r)$ is derived  from the weight or group-theoretical translation-invariant 
measure  on $\mathcal{M}$ for the group $G=SL(2,\R)$.
One remarkable application is that  
 our proof of Theorem 1.2 can be applied  to obtain a sharp threshold for the 
case  
$\lam(x)=\frac{r}{\sinh r}$, where $\lam $ satisfies Hypothesis \ref{mu-hypo}.
Note that such    
$\lam$ fails to satisfy the {\em flatness condition} $\lam''(0)=0$ at the critical point $x_0=0$
as is required in \cite[Theorem 1.1]{BaCaDuy10},  
 thus the construction of a minimal mass blowup solution does not work through a pseudo-conformal transform method, cf. \cite[Section 1.3]{BaCaDuy10}. 
However, the problem of the existence of minimal mass blowup solution at exact\textcolor{red}{ly} the ground state level 
seems still open, as is mentioned in Remark \ref{rk:gap-Qmax-min}.
\end{remark}



\section{Ground state solutions}\label{s:grst-exist-Energy} 
In this section, we will prove the existence for the set of ground states for (\ref{e:nls-H-rot}) in the case  $|\Om|<\ga$.
In fact,  we will elaborate the result for more general nonlinearity as given in the following RNLS on $\R^{1+n}$
 \begin{align}\label{e:nls-G-Omga}
	i u_t = -\frac12\De u +\frac{\ga^2}{2}|x|^2 u -\kappa N(x,|u|)u
	- \Om L_z u,  \quad   u(0)= u_0 \in \Sigma\,.
	\end{align}
 The associated energy functional $E_{\Om,\ga}$ is given by 
	\begin{equation}
	E_{\Omega, \gamma}(u)
	=\int \left(\frac12 |\nabla u|^2 +\frac{\ga^2}{2} |x|^2 |u|^2 -\kappa G(|u|^2)- \Om \bar{u} L_z u \right) dx,\label{Eomga:u-G}
	\end{equation}
where $\kappa=\pm 1$ and $G(x,s^2)=2\int_0^{s} N(x,\tau)d\tau$ for $s\ge 0$.
 Throughout this section and next, we assume conditions $(\Om,\ga)_0$ and $(G)_0$ given in the following hypothesis. 
\begin{hypothesis}\label{hy:G0-Om-ga} Let $\Om$, $\ga$ and $G$ satisfy the following conditions. 
	\begin{enumerate}
	\item[($\Omega, \gamma)_0$:]\label{it:om-ga} $|\Omega| < \gamma$.
	\item[($G_0$):]\label{it:G0} Let $G: \R^n\times \mathbb{R}_+ \rightarrow \mathbb{R}_+$ be continuous and differentiable 
	such that
	   for some constant $C>0$ and  $ p>1$,
\begin{align}\label{G(xv)^p}
0\le G(x,v) \leq C ( v+v^{\frac{p+1}{2}}). 
\end{align}
	\end{enumerate}
\end{hypothesis}
Note that  the nonlinearity $N(x,|u|)u=\pa_sG(x,|u|^2)u$  includes $\mu(x) |u|^{p-1}u$, 
and the combined inhomogeneous terms 
$\mu_1(x)|u|^{p_1-1}u+\mu_2(x) |u|^{p_2-1}u$, 
$\mu, \mu_1, \mu_2$ being positive and bounded functions.
In what follows we will use the abbreviation $G(s)=G(x,s)$ 
unless otherwise specified. 

Recall   $\Sigma= \{ u \in H^1: \int |x|^2 |u|^2 < \infty \}$ is a Hilbert space endowed with the inner product 
\begin{align*}
& \la u, v\ra_{\Sigma}=\la H_{\Om,\ga}u,v\ra_{L^2},  
\end{align*}
where the norm is given by  $\| u \|_\Sigma^2 = \la H_{\Om,\ga}u, u\ra
\approx \| \nabla u \|_2^2 + \| x u \|_2^2+\|u \|_2^2$	 and $H_{\Om,\ga}$ is defined as in (\ref{HomV}). 
Then we have $\Sigma $ is compactly embedded in $L^r$ for all $r\in [2,\iy)$ 
in view of \cite[Lemma 5.1]{Zh00}.
\begin{definition}\label{def:Ic-Q_OmV} Let 
	\begin{equation}
	I_c = \inf \{ E_{\Omega, \gamma}(u): u \in S_c \},
	\quad
	S_c = \{ u \in \Sigma: \int |u|^2 = c^2 \}.
	\label{(3.4)}
	\end{equation}
Then $u\in S_c$ is called a ground state solution (g.s.s.) provided $u$ is a solution to the following minimization problem 
\begin{align}\label{EOmV:minimizer-c}
E_{\Om,\ga}(u)=I_c\,. 
\end{align} 
\end{definition} 

Note that any solution of (\ref{EOmV:minimizer-c}) solves the Euler-Lagrange equation 
	\begin{equation}
	-\om u = -\frac12\De u + \frac{\ga^2}{2} |x|^2 u -\kappa N(x,|u|)u 
	- \Om L_z u\,, 
	\label{e:3.5}
	\end{equation}
where $-\om\in\R$ is a  Lagrange multiplier. 
Once we construct a g.s.s. $u=Q_{\Om,\ga}:=Q_{\Om,\ga,c}\,$, then $e^{i\om t}Q_{\Om,\ga}$ 
is a solitary wave solution to \eqref{e:nls-G-Omga}.
Write
\begin{align}\label{E1-G}
&E_{\Om, \ga}(u)=E^1_{\Om, \ga}(u)-\kappa\int G(|u|^2) .
\end{align}
Then  $E^1_{\Om, \ga}$ denotes the linear energy 
	\begin{equation*}
	\begin{aligned}
	E^1_{\Om, \ga}(u)
	& =\frac12\| \nabla_A u \|_2^2 + \int V_e |u|^2\,,  
	\end{aligned}
	\end{equation*}
where  $A$ is given in (\ref{A:Om-x}) and $V_e=\frac12(\ga^2-\Om^2)(x_1^2+x_2^2)+\frac{\ga^2}{2}x_3^2+\cdots+\frac{\ga^2}{2}x_n^2$ as in Section \ref{s:prelimin}.  
It follows by the assumption $(\Om, \ga)_0$ and \cite[Proposition 4.4]{Z12a} that for all $u\in\Sigma$
	\begin{align}
	&E_{\Omega, \gamma}^1 (u)=\la H_{\Om,\ga}u, u\ra\approx \norm{u}^2_{\Sigma}.  \label{(3.7)} \end{align}
Now we  state the first result in this section.
\begin{theorem}\label{t:mini-E-Ic}  Assume Hypothesis \ref{hy:G0-Om-ga}. 
  Let $c>0$. 
\begin{enumerate}
\item[(a)] 
If  $1\le p<1+\frac{4}{n}$ and $\kappa=1$, then the  
problem (\ref{EOmV:minimizer-c}) has a minimizer, 
i.e.,  there exists $u:=Q_{\Om,\ga} \in S_c$ such that $E_{\Om, \ga}(u) = I_c$.
\item[(b)] If $1\le p<1+\frac{4}{n-2}$ and $\kappa=-1$, 
 then (\ref{EOmV:minimizer-c}) always has a minimizer solution. 
  \end{enumerate}
\end{theorem}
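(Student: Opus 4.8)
The plan is to apply the direct method of the calculus of variations, exploiting the crucial fact recorded just above that, under $(\Om,\ga)_0$, the space $\Sigma$ is compactly embedded into $L^r$ for the relevant exponents $r$. This compactness trivializes the concentration--compactness alternative, so no vanishing or dichotomy analysis is needed, and the proof of both (a) and (b) runs along a single template, with only the verification that $I_c$ is finite differing between the focusing and defocusing cases.

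\textbf{Step 1 (finiteness and coercivity of $E_{\Om,\ga}$ on $S_c$).} By \eqref{(3.7)}, the hypothesis $(\Om,\ga)_0$ gives $E^1_{\Om,\ga}(u)=\norm{u}_\Sigma^2$, hence $E_{\Om,\ga}(u)=\norm{u}_\Sigma^2-\kappa\int G(|u|^2)$. When $\kappa=-1$ this is already $\ge\norm{u}_\Sigma^2\ge 0$, so $I_c\ge 0$ and coercivity is immediate. When $\kappa=1$ and $1\le p<1+\tfrac{4}{n}$, I would invoke the Gagliardo--Nirenberg inequality \eqref{GN:Up+1}: since $\theta(p+1)=\tfrac{n(p-1)}{2}<2$ in the mass-subcritical range, the growth bound $(G_0)$, namely $0\le G(x,v)\le C(v+v^{(p+1)/2})$, yields on $S_c$ a control of the form $\int G(|u|^2)\le C\bigl(c^2+c^{(1-\theta)(p+1)}\norm{\nabla u}_2^{\theta(p+1)}\bigr)$, and Young's inequality absorbs the gradient term into $\tfrac{1}{2}\norm{u}_\Sigma^2$, so that $E_{\Om,\ga}(u)\ge\tfrac{1}{2}\norm{u}_\Sigma^2-C(c)$. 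In either case $I_c>-\infty$ and every minimizing sequence is bounded in $\Sigma$.

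\textbf{Step 2 (extraction and lower semicontinuity).} Take a minimizing sequence $\{u_k\}\subset S_c$ with $E_{\Om,\ga}(u_k)\to I_c$; by Step 1 it is $\Sigma$-bounded, so along a subsequence $u_k\rightharpoonup u$ weakly in the Hilbert space $\Sigma$. By the compact embeddings $\Sigma\hookrightarrow\hookrightarrow L^2$ and $\Sigma\hookrightarrow\hookrightarrow L^{p+1}$ (the latter legitimate in the stated $H^1$-subcritical range of $p$), $u_k\to u$ strongly in $L^2$ and in $L^{p+1}$, and a.e. along a further subsequence; in particular $\norm{u}_2=c$, so $u\in S_c$. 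The growth bound $(G_0)$ together with strong $L^2\cap L^{p+1}$ convergence gives $\int G(|u_k|^2)\to\int G(|u|^2)$ by dominated convergence, while $\norm{\cdot}_\Sigma^2=E^1_{\Om,\ga}$ is weakly lower semicontinuous on $\Sigma$. Hence $E_{\Om,\ga}(u)\le\liminf_k E_{\Om,\ga}(u_k)=I_c$, and since $u\in S_c$ this forces equality; thus $u=Q_{\Om,\ga}$ solves \eqref{EOmV:minimizer-c}.

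The two places where the real work lies are: (i) establishing, via \cite[Proposition 4.4]{Z12a}, that $(\Om,\ga)_0$ genuinely gives the coercive identity $E^1_{\Om,\ga}(u)=\norm{u}_\Sigma^2\approx\norm{\nabla u}_2^2+\norm{xu}_2^2+\norm{u}_2^2$ and the weak lower semicontinuity of the magnetic quadratic form --- this is precisely why the \emph{slow-rotation} assumption $|\Om|<\ga$ cannot be dropped, since for $|\Om|\ge\ga$ the effective potential $V_e$ ceases to confine in the $(x_1,x_2)$-plane and both coercivity and the compactness of the embedding break down; and (ii) the continuity of $u\mapsto\int G(x,|u|^2)$ on $\Sigma$-bounded sets under $L^2\cap L^{p+1}$ convergence, which must be drawn carefully out of $(G_0)$ as $G$ is only assumed continuous, differentiable, and bounded above by $C(v+v^{(p+1)/2})$. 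I also note that the coercivity step in part (a) uses mass-subcriticality $p<1+\tfrac{4}{n}$ essentially; at the critical exponent $p=1+\tfrac{4}{n}$ it fails once $c$ reaches $\norm{Q}_2$, which is exactly the threshold isolated in Theorem \ref{t:gwp-blowup-Om} and Theorem \ref{t:stabi-masscr3}.
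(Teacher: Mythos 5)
Your proposal is correct and follows essentially the same route as the paper's proof: coercivity of $E_{\Om,\ga}$ on $S_c$ via the norm identity \eqref{(3.7)} together with the Gagliardo--Nirenberg and Young inequalities in the focusing mass-subcritical case (trivially in the defocusing case), then the direct method using the compact embedding $\Sigma\hookrightarrow L^r$, convergence of $\int G(|u_k|^2)$ by a (generalized) dominated convergence argument, and weak lower semicontinuity of $\|\cdot\|_\Sigma^2$. The only cosmetic difference is that the paper spells out the domination step via the inverse dominated convergence theorem and additionally records strong $\Sigma$-convergence of the minimizing sequence, which is stated separately as Corollary \ref{c:mini-relcomp-defoc} and is not needed for the theorem itself.
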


\begin{proof}  (a) Let   $p\in (1,1+\frac{4}{n})$ and $G$ satisfy $(G_0)$.  Then $0<\sigma n<2$,
where $\sigma=(p-1)/2$.  
  First we show that for all $u$ in $S_c$, 
 $E_{\Om,\ga}(u)\approx \norm{u}_{\Sigma}^2-\int G(|u|^2) dx$ is bounded from below, which implies  
\begin{align}
I_c\ne -\iy. \label{Ic-below}
\end{align}
Indeed,  we obtain from (\ref{G(xv)^p}) and \eqref{GN:Up+1} and  Young's inequality
\begin{align}
&\int G(|u|^2) \le C\int |u|^2+C\int |u|^{p+1}\notag\\
\le& C\norm{u}_2^2+C \frac{  \lVert u\rVert_2^{(2\sigma+2-\sigma n)q  }}{ \veps^{q}}
+C\veps^{q'}  \lVert \nabla u\rVert_2^{2}\,,\quad \forall \veps>0, \label{Cq-deU}
\end{align}
where we set $a= \veps\inv \lVert u\rVert_2^{2\sigma+2-\sigma n}$, $b=\veps\norm{\nabla u}_2^{\sigma n}$,
$q'=\frac{2}{\sigma n}$, $q=\frac{2}{2-\sigma n}$ in the inequality
$ab\le \frac{a^{q}}{q}+\frac{b^{q'}}{q'}$. 
Then, by taking $\veps$ sufficiently small,
it is easy to see from (\ref{E1-G}) and (\ref{Cq-deU}) that there exists some $C_0>0$ such that for all $u$ in $S_c$
\begin{align}
&E_{\Om,\ga}(u)\ge -C_0 .
\end{align}

Secondly, we show that  any given minimizing sequence $\{u_n\}\subset S_c$ of (\ref{EOmV:minimizer-c}) 
 is relatively compact in $\Sigma$. 
To do this we prove the following properties:
\begin{enumerate}
\item[(i)]  $\lVert  u_n\rVert_\Sigma$ is bounded.
\item[(ii)] There exists a subsequence $\{u_{n_j}\}$ weakly converging to some $w\in \Sigma$ such that 
$\lim_{j \to \iy} \int G(|u_{n_j}|^2)= \int G(|w|^2) $. 
\item[(iii)] The limit $w$ is a minimizer solution to (\ref{EOmV:minimizer-c}).
\item[(iv)] The sequence $\{u_{n_j}\}$ converges to $w$  in the $\Sigma$-norm.
\end{enumerate}

Since  $\{u_n\}$ is a minimizing sequence in $S_c$\,, $E_{\Om,\ga}(u_n)$ are bounded. 
It \mbox{follows} from (\ref{E1-G}) and (\ref{Cq-deU}) that
\begin{align*}
&E^1_{\Om,\ga}(u_n)=E_{\Om,\ga}(u_n) +\int G(|u_n|^2)\\
\le& C+\frac{  C}{ \veps^{q}}+C\veps^{q'} 
 \lVert \nabla u_n\rVert_2^{2}.
\end{align*}
By taking $\veps$ sufficiently small, we obtain, in view of \eqref{(3.7)},
\begin{align*}
& \lVert  u_n\rVert_\Sigma^{2}\approx  E^1_{\Om,\ga} (u_n)\le C.
\end{align*}
This proves (i).

The property  (i) suggests that $\{u_n\}$ is weakly precompact in  $\Sigma$, 
namely,  there exists $w \in \Sigma$ such that, up to a subsequence,  
$\{u_n\} \rightharpoonup w$ weakly in $\Sigma$. 
Since $\Sigma$ is compactly embedded in $L^r$ for any $r \in [2, \frac{2n}{n-2})$, 
we see that there exists  
a subsequence,  which is still denoted by $\{u_n\}$, such that 
	\begin{equation}
	u_n \rightarrow w \; \text{in} \; L^r,
	\quad \forall \ 2\le r <  \frac{2n}{n-2}\,. \label{(3.8)}
	\end{equation} 
 

 Claim. \begin{equation}\label{int-Gun-w}
\int G(|u_n|^2) dx \rightarrow \int G(|w|^2) . \end{equation} 
Indeed, since $u_n \rightarrow w$ in $L^2$ and $L^{p+1}$,
by the continuity of $G$ we have, up to a subsequence, for almost all $x$ 
\begin{align} 
&\lim_n G(|u_n(x)|^2)=G(|w(x)|^2). \label{e:Gun-v(x)} 
\end{align}
Moreover, by the inverse 
dominated convergence theorem, 
we can find a subsequence $\{u_{n_j}\}$ of $\{u_n\}$ 
and two functions $h_1$ and $h_2$ such that: 
\begin{enumerate}
\item $h_1 \in L^2$ and $h_2 \in L^{p+1}$
\item $|u_{n_j}(x)| \leq h_1(x)$ and $|u_{n_j}(x)| \leq h_2(x)$ a.e. 
\end{enumerate}
Thus $G(|u_{n_j}(x)|^2) \leq C(h_1(x)^2 + h_2(x)^{p+1})$.  
Since $h_1^2 + h_2^{p+1} \in L^1$, 
applying the dominated convergence theorem, we obtain 
	\begin{equation}
	\lim_{j \to \iy} \int G(|u_{n_j}(x)|^2) 
	= \int G(|w(x)|^2) .
	\label{(3.9)}
	\end{equation}
This actually implies that the whole sequence in the Claim verifies (\ref{int-Gun-w}), 
because otherwise one can easily finds a contradiction by a subsequence argument. 
Thus we have proven property (ii).

To show (iii), we note that,  by the lower semi-continuity of $\| \cdot \|_{\Sigma}$ for  weakly convergence sequence, 
	\begin{equation}
	\| w \|_{\Sigma}^2 \leq \liminf \| u_n \|_{ \Sigma}^2\,.
	\label{(3.10)}
	\end{equation}
Now combining \eqref{(3.8)},   \eqref{int-Gun-w} and \eqref{(3.10)} we obtain
	\begin{equation}
	\begin{cases}
	\| w \|_2^2 = c^2, \\
	E_{\Omega, \gamma} (w) \leq \liminf E_{\Omega, \gamma} (u_n) = I_c\,.
	\label{e:(3.11)}
	\end{cases}
	\end{equation}
Consequently $w$ is a minimizer of \eqref{(3.4)}. 
In turn, since $w$ is a weak limit of ${u_n}$ in $\Sigma$, plus $\lim_n \norm{u_n}_\Sigma=\norm{w}_\Sigma$, 
we conclude that $\{u_n\}$ converges to $w$ in $\Sigma$.
This proves (iv) and the existence of a ground state in the focusing case. 


(b) 
 Let $p\in(1, 1+\frac{4}{n-2})$ and $\kappa=-1$ (defocusing).  
We have 
\begin{align}\label{EOmV+G}
&E_{\Om,\ga}(u)\approx\norm{u}^2_{\Sigma}+\int G(|u|^2).
\end{align}
Then the problem \eqref{(3.4)} always has $0< I_c<\iy$.  
Let $\{u_n\} \subset S_c$ be a minimizing sequence of \eqref{EOmV:minimizer-c}: 
$\norm{u_n}_2^2 = c^2$ and $\displaystyle \lim_{n} E_{\Omega, \gamma}(u_n) = I_c$.

It follows from  (\ref{EOmV+G}) that $\norm{u_n}^2_{\Sigma}\lesssim E_{\Om,\ga}(u_n)$. 
Therefore $\{u_n\}$ is bounded in $\Sigma$. The remaining of the proof proceeds the same as in the focusing case.\end{proof} 

From the proof of Theorem \ref{t:mini-E-Ic}, we have the following corollary. 
\begin{corollary}\label{c:mini-relcomp-defoc} 
Under the assumptions  in either (a) or (b) of Theorem \ref{t:mini-E-Ic},
any minimizing sequence of (\ref{EOmV:minimizer-c}) is relatively compact in $\Sigma$. 
\end{corollary}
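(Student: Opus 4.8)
The plan is to observe that the proof of Theorem~\ref{t:mini-E-Ic} already extracts, from an arbitrary minimizing sequence, a subsequence converging strongly in $\Sigma$, and that its limit is a minimizer. I would simply repackage the steps (i)--(iv) of that proof into a self-contained argument valid for \emph{every} minimizing sequence, and then invoke the subsequence principle to upgrade strong convergence along a subsequence to relative compactness of the sequence itself.

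First I would fix a minimizing sequence $\{u_n\}\subset S_c$, so $\|u_n\|_2^2=c^2$ and $E_{\Om,\ga}(u_n)\to I_c$, and record the uniform bound $\|u_n\|_\Sigma\le C$. In the focusing case (a) this follows by combining \eqref{E1-G} with the Gagliardo--Nirenberg/Young estimate \eqref{Cq-deU} — legitimate because $0<\sigma n<2$ when $p<1+\tfrac4n$ — and choosing $\veps$ small, so that $\|u_n\|_\Sigma^2=E^1_{\Om,\ga}(u_n)\le C$. In the defocusing case (b), \eqref{EOmV+G} gives $\|u_n\|_\Sigma^2\le E_{\Om,\ga}(u_n)\le C$ directly. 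Since $\Sigma$ is a Hilbert space, after passing to a subsequence $u_n\rightharpoonup w$ weakly in $\Sigma$; by the compact embedding $\Sigma\hookrightarrow L^r$ for $r\in[2,\infty)$ (valid under $|\Om|<\ga$, cf.\ \cite[Lemma 5.1]{Zh00}) we get $u_n\to w$ strongly in $L^2$ and $L^{p+1}$, whence $\|w\|_2^2=c^2$ and, by the dominated-convergence argument already carried out in the proof of the theorem, $\int G(|u_n|^2)\to\int G(|w|^2)$.

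Combining this with the weak lower semicontinuity $\|w\|_\Sigma^2\le\liminf\|u_n\|_\Sigma^2$ yields $E_{\Om,\ga}(w)\le\liminf E_{\Om,\ga}(u_n)=I_c$; since $w\in S_c$ this forces $E_{\Om,\ga}(w)=I_c$, so $w$ is a ground state, and the reverse inequality simultaneously forces $\|u_n\|_\Sigma\to\|w\|_\Sigma$. Weak convergence together with norm convergence in the Hilbert space $\Sigma$ then gives $u_n\to w$ strongly in $\Sigma$. Finally, to conclude relative compactness I would note that any subsequence of a minimizing sequence is again a minimizing sequence, so the argument above applies verbatim to it and produces a further subsequence converging strongly in $\Sigma$ to some minimizer; hence the closure of $\{u_n\}$ in $\Sigma$ is sequentially compact, which is exactly the assertion.

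There is no genuine analytic obstacle here beyond what is already done in the proof of Theorem~\ref{t:mini-E-Ic}; the only point requiring care — and the reason the statement is phrased as \emph{relative} compactness rather than convergence of the whole sequence — is that the set of minimizers $Z_c$ need not be a singleton (it is only an orbit under the relevant symmetries), so distinct subsequences may have distinct strong limits, and nothing in the estimates above pins down a unique $w$. Thus the corollary is a corollary in the literal sense: its proof is the bookkeeping step of noticing that every inequality used above held for an arbitrary minimizing sequence.
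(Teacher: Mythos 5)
Your proposal is correct and follows essentially the same route as the paper: the paper's proof of Theorem \ref{t:mini-E-Ic} is already carried out for an arbitrary minimizing sequence (boundedness in $\Sigma$, weak convergence plus the compact embedding $\Sigma\hookrightarrow L^r$, convergence of $\int G(|u_n|^2)$, lower semicontinuity, and then weak convergence together with $\norm{u_n}_\Sigma\to\norm{w}_\Sigma$ giving strong convergence), and the corollary is just the observation that these steps apply to every minimizing sequence. Your repackaging, including the subsequence-principle remark and the caveat that the limit need not be unique since $Z_c$ is not a singleton, matches the paper's intent exactly.
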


\begin{remark} If $N(x,|u|)=\mu(x)|u|^{p-1}$ and $u=Q_{\Om,\ga}$ is a g.s.s. of (\ref{EOmV:minimizer-c}), then 
 \begin{align*}
&-\om c^2= I_c-\frac{p-1}{p+1}\int \mu(x)|Q_{\Om,\ga}|^{p+1} .
\end{align*}
This identity can provide information on the signs of the $\om$ and $I_c$.
\end{remark}

\begin{remark} The convergence for the integrals in \eqref{int-Gun-w} essentially relies on 
the so called inverse dominated convergence theorem.  
 We can also prove \eqref{int-Gun-w} by the fact that 
any given $f_n\to f$ a.e. satisfying $\{f_n\}$ being uniformly absolute continuous in the sense of integration, then 
 $\lim_n \int f_n \to \int f$ 
 holds (without recourse to the Lebesgue dominated convergence theorem).
\end{remark}

\subsection{Mass-critical case} 
When  $p= 1+4/n$, $\kappa=1$, the minimization problem (\ref{EOmV:minimizer-c})  
 still makes sense for  small values of $c>0$. 
We show in Theorem \ref{th:grstQ-masscri} that there exists  a threshold mass level concerning the existence of g.s.s.
A priori, we need the following lemma by 
  the Gagliardo-Nirenberg inequality \eqref{e:GN-deu-u2} and the diamagnetic inequality 
\begin{align}
&\Vert \nabla |u| \Vert_{2} \le \Vert (\nabla -iA)u\Vert_2\,, \label{deU<grad-iAu}
\end{align}
which can be found in e.g. \cite{EL89} or \cite{BHIM}.

\begin{lemma}\label{l:diamagnGN-ineq-Qa} 
 There is a sharp constant $c_{GN}$ such that for all $u\in \Sigma$ 
\begin{align}&\norm{u}_{2+4/n}^{2+4/n}\le c_{GN} \norm{\nabla_A u}_2^2 \norm{ u}_2^{4/n} ,\label{e:GN-UA-diamag}
\end{align}
\end{lemma}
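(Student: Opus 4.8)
The plan is to obtain \eqref{e:GN-UA-diamag} by bootstrapping from the scalar sharp Gagliardo--Nirenberg inequality of Lemma~\ref{l:sharp-GN-ineq-Q}(b) through the diamagnetic inequality, with $c_{GN}$ taken to be exactly the scalar constant \eqref{e:c-GN-Qd}, and then to verify that this constant cannot be lowered by means of a mass-critical rescaling argument.

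First I would reduce to the non-magnetic inequality. For $u\in\Sigma\subset H^1$ the modulus $|u|$ again lies in $H^1$ and satisfies $|\nabla|u||\le|\nabla u|$ a.e., so that $\||u|\|_{2+4/n}=\|u\|_{2+4/n}$ and $\||u|\|_2=\|u\|_2$. Applying \eqref{e:GN-deu-u2} to the real function $v=|u|$ gives
\begin{align*}
\|u\|_{2+4/n}^{2+4/n}\le c_{GN}\,\|\nabla|u|\|_2^2\,\|u\|_2^{4/n},
\end{align*}
with $c_{GN}$ the constant in \eqref{e:c-GN-Qd}. I would then invoke the diamagnetic inequality $\|\nabla|u|\|_2\le\|\nabla_A u\|_2$ (see \cite{EL89,BHIM}) to replace $\|\nabla|u|\|_2$ by $\|\nabla_A u\|_2$, which is exactly \eqref{e:GN-UA-diamag}. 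For completeness I would recall that this diamagnetic bound rests on the pointwise identity $|\nabla_A u|^2=|\nabla|u||^2+|u|^2|\nabla\phi-A|^2$, valid for $u=|u|e^{i\phi}$ off the zero set, together with a standard truncation at the nodal set.

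For the sharpness of $c_{GN}$ in \eqref{e:GN-UA-diamag} I would use the scaling family $u_\lambda(x)=\lambda^{n/2}Q(\lambda x)$, where $Q=Q_{\lam,1}$ solves \eqref{e:grstQ-masscri} and is the extremiser of \eqref{e:GN-deu-u2}. Since $Q$ is real, the above pointwise identity together with the linear form \eqref{A:Om-x} of $A$ yields
\begin{align*}
\|\nabla_A u_\lambda\|_2^2=\lambda^2\|\nabla Q\|_2^2+\lambda^{-2}\Om^2\int (x_1^2+x_2^2)\,|Q|^2,
\end{align*}
while $\|u_\lambda\|_{2+4/n}^{2+4/n}=\lambda^2\|Q\|_{2+4/n}^{2+4/n}$ and $\|u_\lambda\|_2=\|Q\|_2$ by the mass-critical scaling. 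Hence
\begin{align*}
\frac{\|u_\lambda\|_{2+4/n}^{2+4/n}}{\|\nabla_A u_\lambda\|_2^2\,\|u_\lambda\|_2^{4/n}}
&=\frac{\|Q\|_{2+4/n}^{2+4/n}}{\big(\|\nabla Q\|_2^2+\lambda^{-4}\Om^2\int (x_1^2+x_2^2)|Q|^2\big)\,\|Q\|_2^{4/n}}\\
&\longrightarrow c_{GN}\qquad\text{as }\lambda\to\iy,
\end{align*}
using the equality case of \eqref{e:GN-deu-u2}; this shows no constant smaller than $c_{GN}$ can serve in \eqref{e:GN-UA-diamag}.

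The step I expect to be the main obstacle is precisely this last one. Because $\curl A\ne 0$ when $\Om\ne 0$, the diamagnetic inequality is strict for every nonzero $u$, so the magnetic Gagliardo--Nirenberg inequality has no extremiser and sharpness has to be demonstrated through an optimising sequence rather than a minimiser. The rescaling above circumvents this by exploiting that $A$ is linear in $x$: under the mass-critical scaling the magnetic correction term decays like $\lambda^{-2}$ and becomes negligible relative to $\lambda^2\|\nabla Q\|_2^2$, so the magnetic quotient converges to the known scalar sharp constant. The remaining ingredients --- the passage $u\mapsto|u|$, the diamagnetic inequality, and the scalar sharp constant from Lemma~\ref{l:sharp-GN-ineq-Q} --- are routine.
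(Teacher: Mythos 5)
Your proposal is correct and follows the same route as the paper: the inequality itself is obtained exactly as in the text, by applying the scalar sharp Gagliardo--Nirenberg inequality \eqref{e:GN-deu-u2} to $|u|$ and then invoking the diamagnetic inequality $\|\nabla|u|\|_2\le\|\nabla_A u\|_2$, with $c_{GN}$ the constant of \eqref{e:c-GN-Qd}. Your additional verification that this constant is sharp, via the mass-critical rescaling $u_\lambda=\lambda^{n/2}Q(\lambda x)$ for which the magnetic correction $\lambda^{-2}\Om^2\int(x_1^2+x_2^2)Q^2$ becomes negligible, is a correct optimizing-sequence argument that the paper leaves implicit, so your write-up is in fact slightly more complete than the paper's.
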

where $c_{GN}=\frac{n+2}{2n \norm{Q_{1,1}}_2^{4/n}}=\frac{n+2}{2\lam n \norm{Q_{\lam,1}}_2^{4/n}}$ for any $\lam>0$. 

\begin{theorem}\label{th:grstQ-masscri} Let $p=1+\frac4n$ and $\kappa=1$. 
Let $\Om$, $\ga$ and $G$ verify Hypothesis \ref{hy:G0-Om-ga}. 
Let $Q_{\lam,1}$ be the positive radial  solution of \eqref{e:grstQ-masscri},
where  $\lam=\frac{n+2}{n}C$ 
and $C$ is the constant in \eqref{G(xv)^p}. 
We have: 
\begin{enumerate}
\item[(a)] If $c<\norm{Q_{\lam,1}}_2$, then there exists a ground state solution of Problem \eqref{EOmV:minimizer-c}. 
Moreover, any minimizing sequence for \eqref{EOmV:minimizer-c} is relatively compact in $\Sigma$. 
\item[(b)] In particular,  let $G(v)=\frac{\lam_0 n}{n+2}v^{1+2/n}$ for any given $\lam_0>0$,  then Problem \eqref{EOmV:minimizer-c}
corresponds to the problem of finding the g.s.s for \eqref{U:OmV}. 
It holds that if $c<\norm{Q_{\lam_0,1}}_2$,    
then there exists  a g.s.s. $u=Q_{\Om,\ga}$ of  \eqref{EOmV:minimizer-c} satisfying for some $\om=\om_c\in\R$
\begin{align}\label{U:OmV}
&-\frac12\De u+\frac{\ga^2}{2} |x|^2u-\lam_0 |u|^{\frac{4}{n}}u-\Om L_zu=-\om u,\qquad \norm{u}_2=c\,.\end{align}
\end{enumerate}
\end{theorem}

\begin{proof}
From (\ref{G(xv)^p}) and Lemma \ref{l:diamagnGN-ineq-Qa}  we have for all $u$ in $S_c$
	\begin{align}
	E_{\Omega, \gamma}(u)=&\frac12\int |\nabla_A u|^2+ \int V_e |u|^2-\int G(|u|^2)\notag\\
	 \geq& \frac12\int |\nabla_A u|^2+ \int V_e |u|^2-\frac{\lam n}{n+2}\int |u|^2
	 -\frac{\lam n}{n+2} \, c_{GN}\norm{u}_2^{\frac{4}{n}}\norm{\nabla_A u}_2^2\notag\\
	=& \frac12\left(1-\frac{\norm{u}_2^{4/n}}{\norm{Q_{\lam,1}}_2^{4/n}}\right) \int |\nabla_A u|^2 
	+ \int V_e |u|^2 -C_0.\label{EOmV>K}
	\end{align}
Like  in the proof of Theorem \ref{t:mini-E-Ic}, we have, given $ c<\norm{Q_{\lam,1}}_2$, then the above estimate 
shows:
\begin{enumerate}\label{en:Ec-lowbd-masscri}
\item[(i)] For all $u\in S_c$, $E^1_{\Om, \ga}(u)$ is bounded from below by a constant. Hence $I_c$ exists. 
\item[(ii)] 
Let $\{u_n\}\subset S_c$ be a minimizing sequence for  $(\ref{EOmV:minimizer-c})$.
Then it is bounded in $\Sigma$. 
\item[(iii)] Since $\Sigma$ compactly embedded in $L^r$, $r\in [2,\frac{2n}{n-2})$, there exists $w\in\Sigma$
and a subsequence, still denoted by $\{u_n\}$, such that
$\int G(|u_n|^2)\to \int G(|w|^2)$.
\item[(iv)] Then, (iii) implies that up to a subsequence, $E_{\Om,\ga}(w)=\lim_n E_{\Om,\ga}(u_n)$
and $u_n\to w$ in $\Sigma$. In particular, $w$ is a minimizer. 
\end{enumerate}   
Thus the existence result (iv) proves part (a) of the theorem. Part (b) is a special case of part (a).  
\end{proof}

\begin{remark} The theorem suggests that in the focusing, mass-critical case, there exists ground state solutions on the 
mass level set   $S_c$ that are below the free ground state $Q_{\lam,1}$. 
However, such solutions are   not unique, consult e.g. \cite{EL89}.
If $c>\norm{Q}_2$, then  there may not exist a solution for (\ref{EOmV:minimizer-c}), see \cite{GuoSeir14} 
 for the case where $V$ is a quadratic potential and  $\Om=0$.
Compare also \cite{ANenS18} where the case of anisotropic $V$ is considered for $p<1+4/n$, $n=2,3$.
\end{remark}
 
\section{Orbital stability of standing waves}\label{s:orb-stab-u}
In this section, we  prove the orbital stability of the standing waves of \eqref{e:nls-G-Omga}. 
Throughout this section we will assume $G$ satisfies:
 \begin{enumerate}
	\item[($\Omega, \gamma)_0$:]\label{Om-ga} $|\Omega| < \gamma$.
	\item[($G_0^\prime$):]\label{dG0} Let $G: \R^n\times \mathbb{R}_+ \rightarrow \mathbb{R}_+$ be continuous and differentiable such that 
	   for some constant $C>0$ and 	$1\le p< 1+\frac{4}{n-2}$, 
\begin{align}
|\pa_vG(x,v)| \leq C( 1+v^{\frac{p-1}{2}}). 
\end{align}
\end{enumerate} 
Note that ($G_0^\prime$) is slightly stronger than $(G_0)$. 
It is easy to see from the condition $(G^\prime_0)$ and the embedding $H^1\hr L^{r}$, $r\in [2,\frac{2n}{n-2})$ 
\begin{equation}
	E_{\Om, \ga}: \Sigma \rightarrow \mathbb{R} \quad
	\text{is continuous} . \label{(3.3)}
	\end{equation}

 \begin{proposition}\label{pr:dG-EU} Let $G$ satisfy the conditions $(\Om,\ga)_0$ and ($G'_0$) for $p>1$ and 
 $\kappa=\pm 1$. 
 Let $r=p+1$ and  $q=\frac{2(p+1)}{(p-1)}$. 
\begin{enumerate}
\item[(a)] Given any $u_0$ in $\Sigma$, 
there exists a unique maximal solution $u$ of (\ref{e:nls-G-Omga}) in $C(I, \Sigma)\cap L^q(I, H^{1,r})\cap C^1(I, \Sigma\inv)$ on $ I=(-T_{min},T_{max})$ such that 
if $T_{max}$ (or $T_{min}$) is finite, then $\norm{u(t)}_\Sigma\to \iy$ as $t\to T_{max}$, (respectively $t\to -T_{min}$).
Moreover, the following conservation laws hold on $I$: 
\begin{align*} 
&\norm{u}_2=\norm{u_0}_2\\
&E_{\Om,\ga}(u)=E_{\Om,\ga}(u_0). 
\end{align*} 
\item[(b)] The solution $u$ of (\ref{e:nls-G-Omga}) is global in time if one of the following holds:
\begin{enumerate}
\item[(i)]  $\kappa=1$, $p=1+\frac{4}{n}$ and $\norm{u_0}_2<\norm{Q_{\lam,1}}_2$, 
where $\lam$ is given as in Theorem \ref{th:grstQ-masscri}.
\item[(ii)] $\kappa=-1$ and $1<p<1+\frac{4}{n-2}$.
\end{enumerate}
\end{enumerate}
\end{proposition}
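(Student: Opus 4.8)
The plan is to run the standard energy-subcritical Cauchy theory for \eqref{e:nls-G-Omga} exactly as behind Proposition \ref{pr:gwp-lwpUav}, the only new ingredient being that the nonlinearity $N(x,u)=G'(|u|^2)u$ now carries an $x$-dependence which is controlled uniformly by Hypothesis $(\Om,\ga)_0$--$(G'_0)$; the two globalization statements in part (b) will then follow from mass and energy conservation together with the variational inequalities of Section \ref{s:grst-exist-Energy}. Writing \eqref{e:nls-G-Omga} in magnetic form with $A$ as in \eqref{A:Om-x}, $V_e=\frac12(\ga^2-\Om^2)(x_1^2+x_2^2)+\frac{\ga^2}{2}(x_3^2+\cdots+x_n^2)$ and propagator $U(t)=e^{-itH_{A,V_e}}$, the Duhamel formula reads
\begin{align*}
u(t)=U(t)u_0+i\kappa\int_0^t U(t-s)\,N(x,u(s))\,ds .
\end{align*}
On a short interval $I_T=[-T,T]$ with $T\le\de$, $\de$ as in Lemma \ref{l:disp-Stri-L}, I would set up the fixed point in $Y_T=C(I_T,\Sigma)\cap L^{q}(I_T,\sH^{1,r})$ with $r=p+1$, $q=\frac{2(p+1)}{p-1}$ as in the statement.

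Condition $(G'_0)$ gives the pointwise growth $|N(x,u)|\lesssim|u|+|u|^p$, the difference estimate $|N(x,u)-N(x,v)|\lesssim(1+|u|^{p-1}+|v|^{p-1})|u-v|$, and, using the $C^1$-regularity of $G$, a bound of the same type for $\nabla_x\!\big(N(x,u)\big)$ in terms of $|u|$ and $|\nabla u|$. Since $1\le p<1+\frac{4}{n-2}$ is $\sH^1$-subcritical, Hölder together with the Sobolev embeddings $\sH^{1,r}\hr L^{\rho}$ for the relevant exponents $\rho<\frac{2n}{n-2}$ shows that $u\mapsto N(x,u)$ is locally Lipschitz from $Y_T$ into $L^{\td q'}(I_T,\sH^{1,\td r'})$ for a suitable admissible pair $(\td q,\td r)$, and the weight $\la x\ra$ and the $x$-dependence of $G$ are handled exactly as the constant-coefficient term. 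The inhomogeneous Strichartz estimates \eqref{e:U(t)f-qr-H1}--\eqref{e:stri-Hsqr-L} then close the contraction for $T$ small depending only on $\norm{u_0}_\Sigma$; patching the local solutions over successive windows, whose lengths are bounded below in terms of $\norm{u(t)}_\Sigma$, produces the maximal interval $I=(-T_{\min},T_{\max})$, the uniqueness (same Lipschitz bound and Gronwall), the blowup alternative $\norm{u(t)}_\Sigma\to\iy$ at a finite endpoint, and, directly from $u_t=-i\big(H_{\Om,\ga}u-\kappa N(x,u)\big)$, the regularity $u\in C^1(I,\Sigma\inv)$. The conservation of $M(u)$ and $E_{\Om,\ga}(u)$ is obtained first for smooth, compactly supported data by differentiating in $t$ and using that $V_e$ and $\kappa$ are real and $L_z$ self-adjoint, so that $\la L_\Om u,u\ra$ is real (cf. Subsection \ref{ss:conserv-angularOm}), and then for general $u_0\in\Sigma$ by continuity of the solution map in $\Sigma$. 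This proves part (a).

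For part (b) the point is to bound $\norm{u(t)}_\Sigma$ a priori on $I$, which by the blowup alternative forces $T_{\max}=T_{\min}=\iy$. In the defocusing case (ii), $\kappa=-1$, so by \eqref{EOmV+G} and the identity \eqref{(3.7)} (valid under $(\Om,\ga)_0$),
\begin{align*}
E_{\Om,\ga}(u_0)=E_{\Om,\ga}(u(t))=\norm{u(t)}_\Sigma^2+\int G(|u(t)|^2)\ \ge\ \norm{u(t)}_\Sigma^2,
\end{align*}
hence $\norm{u(t)}_\Sigma^2\le E_{\Om,\ga}(u_0)$ on $I$. In the focusing mass-critical case (i), $\kappa=1$, $p=1+\frac{4}{n}$, and $(G'_0)$ implies the growth bound \eqref{G(xv)^p}; then Lemma \ref{l:diamagnGN-ineq-Qa} and Young's inequality give, exactly as in the proof of Theorem \ref{th:grstQ-masscri} (see \eqref{EOmV>K}),
\begin{align*}
E_{\Om,\ga}(u(t))\ \ge\ \frac12\Big(1-\frac{\norm{u_0}_2^{4/n}}{\norm{Q_{\lam,1}}_2^{4/n}}\Big)\int|\nabla_A u(t)|^2+\int V_e|u(t)|^2-C_0 ,
\end{align*}
with $\lam=\frac{n+2}{n}C$. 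Since $\norm{u_0}_2<\norm{Q_{\lam,1}}_2$ the bracket is a fixed positive constant, so $\int|\nabla_A u(t)|^2$ and $\int V_e|u(t)|^2$ remain bounded by $E_{\Om,\ga}(u_0)+C_0$; together with mass conservation and the norm equivalence $\norm{u}_\Sigma^2\approx\norm{\nabla_A u}_2^2+\int V_e|u|^2+\norm{u}_2^2$ (from $(\Om,\ga)_0$, cf. \eqref{(3.7)}) this bounds $\norm{u(t)}_\Sigma$, giving global existence.

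I expect the main obstacle to be the local-in-time step of part (a): the magnetic propagator $U(t)$ is only known to satisfy the dispersive and Strichartz bounds on the small window $(-\de,\de)$ coming from Yajima's oscillatory-integral parametrix, so the contraction must be run on such windows and the $\Sigma$-norm controlled across them, all while tracking how $U(t-s)$ acts on the weight $\la x\ra$ and on the $x$-dependence of $N$. Once the weighted Strichartz machinery of Lemma \ref{l:disp-Stri-L} is available this is routine and runs parallel to the constant-coefficient case; the conservation laws and the global bounds of part (b) are then immediate.
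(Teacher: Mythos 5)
Your proposal is correct and follows essentially the same route as the paper, which omits the details and simply notes that the local theory runs parallel to Proposition \ref{pr:gwp-lwpUav} (magnetic reformulation, Yajima propagator, weighted Strichartz estimates of Lemma \ref{l:disp-Stri-L}) and that global existence follows from the a priori $\Sigma$-bound coming from \eqref{EOmV>K} in the focusing mass-critical case and from the sign of $\kappa$ via \eqref{EOmV+G} in the defocusing case. Your use of mass/energy conservation, the diamagnetic Gagliardo--Nirenberg inequality of Lemma \ref{l:diamagnGN-ineq-Qa}, and the norm identity \eqref{(3.7)} under $(\Om,\ga)_0$ is exactly the intended argument.
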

The proof of the local theory above is similar to that of Proposition \ref{pr:gwp-lwpUav} and Theorem \ref{t:gwp-blowup-Om} (a), hence omitted. 
 Also, see  \cite[Theorem 1]{CazE88} for the case $|\Om|=\ga$. 
The global existence follows from the estimate 
$\norm{u(t,\cdot)}_\Sigma\le C\norm{u_0}_\Sigma $ in view of \eqref{EOmV>K} in the case $\kappa=1$ and 
$\norm{u_0}_2<\norm{Q_{\lam,1}}_2$. 
 As we will see, the stability  requires the local existence and uniqueness for $u_0\in\sH^1$ along with
  the mass and energy conservation laws in the proposition above.  

 For $c>0$ let 
\begin{equation}\label{e:Zc-minEomV}
 Z_c = \{u \in \Sigma: E_{\Om, \ga}(u) = I_c, \norm{u}_2=c \}, 
\end{equation}
where $I_c$ is the minimum given by \eqref{(3.4)}. Then $Z_c$ is the set of all ground state solutions
for (\ref{EOmV:minimizer-c}). 
\begin{definition}\label{def1:Zc-orbstabi} 
  The set $Z_c$ is called  orbitally stable if given any $\vphi \in Z_c$, the following property holds:
   $\forall \varepsilon > 0$,
$\exists \delta > 0$ such that 
	\begin{equation}
\| u_0 -\vphi \|_{\Sigma} < \delta\To\sup_t\inf_{\phi \in Z_c} \| u(t, \cdot) - \phi \|_{\Sigma} < \varepsilon,\label{e:orbit-stabi-zc}
	\end{equation}  
where $u_0\mapsto u(t, \cdot)$ is the unique solution flow map  of (\ref{e:nls-G-Omga}). 
\end{definition}




\begin{theorem}\label{t:orb-stab-Zc} 
Assume the condition $(\Omega, \gamma)_0$ 
and ($G^\prime_0$). Let $c>0$. Suppose one of the following is satisfied: 
\begin{enumerate}
\item[(a)] $\kappa=1$, $1<p<1+\frac{4}{n}$;
\item[(b)] $\kappa=1$, $p=1+\frac{4}{n}$ and $c<\norm{Q_{\lam,1}}_2$;
\item[(c)] $\kappa=-1$, $1<p<1+\frac{4}{n-2}$\,.
\end{enumerate} 
Then $Z_c$ is orbitally stable for \eqref{e:nls-G-Omga}. 
In particular,  if $G(v)=\frac{2\lam}{p+1}v^{(p+1)/2}$ for $v\ge 0$ with $\kappa=1$, we obtain 
Theorem \ref{t:stabi-masscr3}.
\end{theorem}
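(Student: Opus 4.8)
The plan is to run the standard concentration--compactness / Cazenave--Lions argument \cite{CazLion82}, using as inputs the two facts already established in the preceding sections. First, in each of the cases (a), (b), (c) the solution flow $u_0\mapsto u(t,\cdot)$ of \eqref{e:nls-G-Omga} is global in time and conserves both $M$ and $E_{\Om,\ga}$ --- this is Proposition \ref{pr:dG-EU}, together with the coercivity estimate \eqref{Cq-deU} in case (a) and \eqref{EOmV>K} in case (b). Second, $Z_c$ is nonempty and every minimizing sequence for \eqref{EOmV:minimizer-c} is relatively compact in $\Sigma$: Corollary \ref{c:mini-relcomp-defoc} covers cases (a) and (c), while Theorem \ref{th:grstQ-masscri}(a) covers the mass-critical case (b), where the hypothesis $c<\norm{Q_{\lam,1}}_2$ is used; in particular $Z_c$ is compact in $\Sigma$.

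Next I would argue by contradiction. If $Z_c$ were not orbitally stable, then by Definition \ref{def1:Zc-orbstabi} there would exist $\veps_0>0$, some $\vphi\in Z_c$, initial data $u_{0,n}\to\vphi$ in $\Sigma$, and times $t_n$ such that, writing $u_n$ for the solution of \eqref{e:nls-G-Omga} with data $u_{0,n}$,
\begin{align}
\inf_{\phi\in Z_c}\norm{u_n(t_n,\cdot)-\phi}_\Sigma\ge\veps_0 \qquad\text{for every }n. \label{e:contra-orbstab}
\end{align}
Set $v_n:=u_n(t_n,\cdot)$. By the conservation laws and continuity of $E_{\Om,\ga}$ on $\Sigma$ (see \eqref{(3.3)}), one has $\norm{v_n}_2=\norm{u_{0,n}}_2\to\norm{\vphi}_2=c$ and $E_{\Om,\ga}(v_n)=E_{\Om,\ga}(u_{0,n})\to E_{\Om,\ga}(\vphi)=I_c$; the coercivity estimates of the first paragraph then bound $\norm{v_n}_\Sigma$. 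Renormalizing onto the constraint sphere, put $\wtd v_n:=(c/\norm{v_n}_2)\,v_n\in S_c$. Since $c/\norm{v_n}_2\to1$ and $\norm{v_n}_\Sigma$ is bounded, $\norm{\wtd v_n-v_n}_\Sigma\to0$, whence $E_{\Om,\ga}(\wtd v_n)\to I_c$; that is, $\{\wtd v_n\}$ is a minimizing sequence for \eqref{EOmV:minimizer-c}. By the compactness input, a subsequence of $\{\wtd v_n\}$ converges in $\Sigma$ to some $w\in Z_c$, and hence $v_n\to w$ in $\Sigma$ along that subsequence, contradicting \eqref{e:contra-orbstab}. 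This proves that $Z_c$ is orbitally stable. The last assertion is just the special case $N(x,u)=\lam|u|^{p-1}u$, i.e. $G(v)=\tfrac{2\lam}{p+1}v^{(p+1)/2}$ and $\kappa=1$: for $p=1+\tfrac4n$ and $c<\norm{Q}_2=\norm{Q_{\lam,1}}_2$ this falls under case (b), yielding Theorem \ref{t:stabi-masscr3}.

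I expect the main obstacle to be mostly bookkeeping, since the hard analytic content --- the concentration--compactness lemma giving relative compactness of minimizing sequences --- has already been carried out in Section \ref{s:grst-exist-Energy}. The one genuine point to verify is that the time-translated, perturbed sequence $v_n=u_n(t_n,\cdot)$ is, after the harmless $L^2$-rescaling, a true minimizing sequence, which comes down to the uniform bound on $\norm{v_n}_\Sigma$. This is exactly the coercivity inequality \eqref{Cq-deU} in cases (a) and (c); in the mass-critical case (b) it is \eqref{EOmV>K}, where the strict inequality $c<\norm{Q_{\lam,1}}_2$ keeps the coefficient $1-(\norm{v_n}_2/\norm{Q_{\lam,1}}_2)^{4/n}$ of $\int|\nabla_A v_n|^2$ bounded away from $0$. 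No new ideas beyond the preceding sections are needed.
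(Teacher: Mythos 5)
Your proposal is correct and follows essentially the same route as the paper: the contradiction argument based on global existence, conservation of mass and energy, continuity of $E_{\Om,\ga}$ on $\Sigma$, and the relative compactness of minimizing sequences from Corollary \ref{c:mini-relcomp-defoc} and Theorem \ref{th:grstQ-masscri}. Your extra $L^2$-renormalization of $v_n$ onto $S_c$ is only a bookkeeping refinement (the paper treats $z_n=\psi_n(t_n,\cdot)$, whose mass merely converges to $c^2$, directly as a minimizing sequence), so it tidies up but does not change the argument.
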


\begin{proof} From  Theorem \ref{t:mini-E-Ic} 
and Theorem \ref{th:grstQ-masscri} we know  $Z_c \ne\emptyset$. 
 According to Proposition \ref{pr:dG-EU}, we have 
\begin{enumerate}
\item[(i)] RNLS \eqref{e:nls-G-Omga} has global in time solution, 
that is, the lifespan is 
 $(-\iy,\iy)$; 
\item[(ii)] RNLS \eqref{e:nls-G-Omga} enjoys the conservation laws for mass and energy. 
\end{enumerate}
In order to prove the orbital stability, we argue by contradiction. 
Suppose $Z_c$ is not stable, then  there exists $w_0 \in Z_c$,
$\varepsilon_0 > 0$ and a sequence $\{ \psi_{0,n} \} \subset \Sigma$ such that
	\begin{equation}
	\| \psi_{0,n} - w_0 \|_\Sigma \rightarrow 0 \ \text{as} \ n \rightarrow \infty \
	\text{but} \ \inf_{\phi \in Z_c} \| \psi_n(t_n, \cdot) - \phi \|_\Sigma \geq \varepsilon_0
	\label{e:(3.13)}
	\end{equation}
for some sequence $\{ t_n \} \subset \mathbb{R}$,
where $\psi_n$ is the solution of \eqref{e:nls-G-Omga} corresponding to the initial data $\psi_{0,n}$.

Now let $z_n = \psi_n(t_n, \cdot)$.  
Since $w_0\in S_c$ and $E_{\Omega, \gamma}(w_0) = I_c$,
it follows from the continuity of $\| \cdot \|_2$ and the continuity of $E_{\Omega, \gamma}$ on $\Sigma$,
that $\| \psi_{0,n} \|_2^2 \rightarrow c^2$ and $E_{\Omega, \gamma}(\psi_{0,n}) \rightarrow I_c$.
By the conservation of the mass $\| \cdot \|_2$ and the energy $E_{\Omega, \gamma}$,
we certainly have  $\| z_n \|_2^2 \rightarrow c^2$ and $E_{\Omega, \gamma} (z_n) \rightarrow I_c$.
Therefore,  $\{ z_n \}$ is a minimizing sequence. According to Corollary \ref{c:mini-relcomp-defoc} and 
Theorem \ref{th:grstQ-masscri}, under any one of the hypotheses in  (a), (b) or (c), 
$\{z_n\}$
contains a subsequence $\{z_{n_j}\} $ converging  to an element $w$ in $\Sigma$ as $j\to\iy$, such that 
$\| w \|_2^2 = c^2$ and $E_{\Omega, \gamma} (w) = I_c$. 
Thus  $w \in Z_c$, 
and as a consequence 
\[
\inf_{\phi\in Z_c} \| \phi_{n_j}(t_{n_j}, \cdot) - \phi \|_\Sigma \leq \| z_{n_j} - w \|_\Sigma\to 0,\] 
as $j\to \iy$. This contradicts \eqref{e:(3.13)}. 
Therefore, we have established the orbital stability. \end{proof} 

\begin{remark} Concerning the usual nonlinearity $N(x,|u|)u=|u|^{p-1}u$ with $\kappa=1$,
if $p=1+\frac{4}{n}$, $c\ge\norm{Q}_2$, or $p>1+\frac{4}{n}$,  then strong instability, namely, blowup can occur for (\ref{e:nls-H-rot}), 
see  Theorem \ref{t:gwp-blowup-Om}, Lemma \ref{l:J-J'-blup} and \cite{AMS12,BHIM}. 
\end{remark} 

\begin{remark} The stability of ground state solution for mNLS in $\R^3$ was initially considered in \cite{CazE88} 
and certain instability  was studied in \cite{GRib91a} and \cite{FuOh03i} for the mass-supercritical case. 
Our  treatment is motivated by  \cite{CazLion82} 
and \cite{HiStu04} on the concentration compactness method and the extension to NLS with a general nonlinearity given by (\ref{G(xv)^p}) 
by showing any minimizing sequence is relatively compact in  $\Sigma$.  
Comparing with \cite{CazE88} the  major improvement of
our results are the following:
\begin{enumerate}
\item[(i)]  We treat the case  $V_e=V-\frac{|A|^2}{2}\ne 0$ in all dimensions, which extends  the results in \cite{CazE88,EL89} where $V_e=0$. Note that this extension requires a norm characterization $\norm{u}_\Sigma\approx \norm{\nabla_A u}_2+\norm{x u}_2$, 
which was obtained in \cite{Z12a} and entails a non-trivial proof using harmonic analysis. 

\item[(ii)] On a remarkable level, we prove a sharp threshold result on the stability in the  mass-critical case for \eqref{e:nls-G-Omga},  
which can be viewed as a complement on the mass-subcritical result in \cite{CazE88} or \cite{ANenS18} if $n=2, 3$.  
 To prove this we have used a sharp diamagnetic Gagliardo-Nirenberg type inequality in \mbox{Lemma \ref{l:diamagnGN-ineq-Qa}.}  
\end{enumerate}\end{remark}

\subsection{Instability when $|\Om|>\ga$: A counterexample} 
In this subsection,  we provide an example in $\R^2$ and $\R^3 $ to show 
 non-existence of minimizers  for \eqref{e:Zc-minEomV}  if $|\Omega| > \gamma$  (fast rotating). 
For $\R^2$ our consideration is 
 motivated by 
 \cite{BuRo99}, \cite{Sei02},  
 and \cite{BaoWaMar05}. 
 This counterexample shows there does not exist a ground state solution to \eqref{EOmV:minimizer-c} 
  in either focusing or  defocusing case. 
 Note that in physics the state of the system is stable only when it attains its lowest energy.
So physically, there exhibits instability in the fast-rotating scenario, in consistence with  the numerical simulations. 

Let $\Om>\ga>0 $ and let $\kappa=1$, $G(v)= K(v+v^{a/2}) $, $a\in (2,\iy)$.  
Define a sequence of vortex (test) functions  $\{\psi_m\}\subset\Sigma$ for $m\in\Z$ 
using $x=(r,\theta)$, the polar coordinate, 
\begin{align}\label{psi:2d}
&\psi_m(x)=\frac{\ga^{(\frac{|m|+1}{2})}}{\sqrt{\pi (|m|)!}} |x|^{|m|} e^{-\ga |x|^2/2} e^{im\theta}.
\end{align}
Then $\norm{\psi_m}_2=1$ for all $m$. We have  $E_{\Om,\ga}(\psi_m)\to -\iy$ as $m\to +\iy$. 

Indeed, we compute by (\ref{Eomga:u-G}) 
 \begin{align*}
E_{\Om,\ga}(\psi_m)=& \frac12\int |\nabla\psi_m|^2+\int V|\psi_m|^2-\Om\int \overline{\psi_m}L_z(\psi_m) -\int G(|\psi_m|^2)\\
=& \frac{|m|+1}{2}\ga+\frac{|m|+1}{2}\ga-\Om m -K -2\pi K\int_0^\iy |\psi_m|^a rdr \\
=&(|m|+1)\ga-\Om m -K+o(1), \quad
\end{align*}
where it is easy to calculate for $ L_z= -i\pa_\theta$, 
 $V(x)=\frac{\ga^2}{2}|x|^2$ and $a>2$
\begin{align*}
&\int  |\nabla \psi_m|^2 =(|m|+1)\ga\notag\\
&\int V |\psi_m|^2  
 =\frac{|m|+1}{2}\ga \notag\\
&\int \overline{\psi_m} L_z (\psi_m)  
= m\\  
&\int |\psi_m|^a = \ga^{\frac{a}{2}-1} \pi^{\frac{3}{2}(1-\frac{a}{2})}  \frac{1}{2^{(\frac{a}{4}-1)}\sqrt{a} (\sqrt{|m|})^{\frac{a}{2}-1}} +o(1)\; \; \;\text{as $|m|\to \iy$.} 
\end{align*} 
\begin{remark} 
From the calculations we see that  $V$ may augment the kinetic energy, 
while  $L_\Om=-\Om L_z$ may add to
\begin{align*}
\begin{cases} 
\text{negative angular momentum energy} &\text{as}\; m\to +\iy\\
\text{positive angular momentum energy}  &\text{as}\; m\to -\iy
\end{cases}  
\end{align*}
and  have larger effect over the nonlinearity in the fast rotating regime.   
\end{remark}

\begin{remark} Let $|\Om|>\ga$.
We can also construct a counterexample in $\R^3$ as follows. Define using (\ref{psi:2d})  
 \begin{align*}
&\phi_m(x)= \al\psi_m(\al {x_1},\al {x_2}) h_0(x_3)\,, 
\end{align*} 
where $\al= (\frac{|m|}{\ga+\veps} )^{1/3}$, $0<\veps<|\Om|-\ga$ 
and $h_0(x_3)=\pi^{-1/4} e^{-x_3^2/2} $. 
Then, similar calculations lead to $E_{\Om,\ga}(\phi_m)\to -\iy$ as $m\,\sign(\Om)\to +\iy$. 
The advantage of such construction is that an easy modification 
allows one to extend it to the case of an anisotropic quadratic potential in $\R^n$.  
\end{remark}

\section{Concluding remarks} 
We conclude our paper with some remarks regarding the applicability and extension of our study to other related problems. 
This paper mainly addresses the threshold for the blowup 
 of  mass critical focusing RNLS (\ref{e:nls-H-rot}).  
 We are able to solve a difficult problem using refined method and techniques that might have some implications 
on the study of the threshold dynamics theory that generally includes the blowup rate, blowup profile and  location of the blowup points of the solutions 
as well as the stability {\em near the threshold}. 
These would provide precise descriptions of the blowup solutions near certain ground state level, 
 in consistence with the experimental evidence in physics lab owing to the principle that the state of the particle  maintains stable when it attains the lowest energy.  


1.  Equations (\ref{e:nls-H-rot}) and (\ref{e:psi-VomLz})  have extensions to general rotational NLS by reformulating   the term $L_\Om=iA\cdot \nabla$, where $A=M x$ with $x\in \R^n$ and $M$ being any skew-symmetric matrix \cite{BaoCai13,BHZ19a,Gar12}. The results in this paper, e.g., \mbox{Theorem \ref{t:gwp-blowup-Om}} and Theorem 
\ref{sharpQ:inhom},  
extend to such RNLS following the same proofs. 
We wish to emphasize that although some (non-bordering) results can be obtained from those of (\ref{E:nls-Up})
by means of the $\mathcal{R}$-transform, a pseudo-conformal or lens type  transform introduced in \cite{BHZ19a,Car11time}
certain crucial tools and properties, e.g., Lemma \ref{l:J-J'-blup} and the blowup of the endpoint $Q$-profile, 
cannot be obtained this way.  
Indeed, 
the solution of the variance $J(t)$ in (\ref{Csin2ga-2ga}) 
reveals that the standard NLS 
is the limiting case of the RNLS as $\ga\to 0$, but that limiting case does not directly provide an ultimate threshold information for the blowup profile. 
In addition, the $\mathcal{R}$-transform may apply to the case $p=1+4/n$ only, since when $p>1+4/n$,  $\mathcal{R}$ does not preserve the energy. 

2. In the mass supercritical case $p\in (1+\frac{4}{n}, 1+\frac{4}{n-2})$, the blowup dynamics appears  subtle 
 for $\norm{u_0}_2=\norm{Q}_2$, where $Q$ is the unique ground state of (\ref{e:grstQ-masscri}).  
Within an arbitrarily small neighborhood of $Q$, there always exist $\vphi_0$ and $\psi_0$ such that the flow 
$\vphi_0\mapsto \vphi$ blowups in finite time, and, $\psi_0\mapsto \psi$ exists globally in time in $\sH^1$.
We have seen that the criteria in Lemma \ref{l:J-J'-blup} 
give sufficient blowup conditions  if $p>1+4/n$. 
It would be of interest to further investigate the {\em sharp} threshold problem in the mass-supercritical regime. 
Moreover,  one should be able to adapt the method in this paper to prove 
 the analogue for some nonlocal problems such as  RNLS with a Hartree nonlinearity.  
 In  the energy-critical regime, one can consider (\ref{e:nls-H-rot}), e.g., in the case 
 where the power nonlinearity becomes exponential type  in two dimensions \cite{CIMM09}.


3. 
Technically, in oder to give a deeper sharp description  of the singularity formation and the local
asymptotic stability of the ``self-similar'' blow-up profile  for $p\ge 1+4/n$, 
it would require  
  computing the trajectory of the solution on the soliton manifold \cite{
  MarMeRa14,Wein85}. 
In our setting, it is natural and easier to consider the blowup and stability on a submanifold $\mathcal{O}$ of $\sH^1$.
In $\R^3$, 
 $\mathcal{O}$ can be taken as  the set of all the $m$-vortex solitons of the form  
$Q^{(m)}(r,\theta,z)=e^{im \theta} \phi(r,z) $ 
using cylindrical coordinates \cite{CazE88,EL89,SimZw11}.  
We would need to analyze the spectral properties of the associated  linearization operators  $L_+^{(m)}=H_{\Om,V}+ 1- p |Q^{(m)}|^{p-1} $ and $L_-^{(m)}=H_{\Om,V}+ 1-  |Q^{(m)}|^{p-1} $, 
which may lead to solving the profile equation of (\ref{e:nls-H-rot}). 

4. The method and results on the orbital stability and the construction of the ground state solutions of (\ref{e:nls-H-rot}) and (\ref{e:nls-G-Omga}) apply to focusing and defocusing cases. Such treatment  might enable one to  further study  the vortex soliton stability for the Pauli-Schr\"odinger system as well as mNLS with more insight.  

5. Finally, we raise the question on the relativistic spin-$\frac12$ model NLD \cite{BouC19,PeShi14}:
 Determine the threshold for the blowup and  
stability for the nonlinear Dirac equation in higher dimensions? 

\vs{.28in}
\nd {\bf Acknowledgments} This work was initiated when S.Z. visited the NYU institute in Shanghai during summer 2015.  Further, S. Z. would like to thank \mbox{W. A. Strauss}  
and J. Holmer for helpful  comments during  his visit at Brown University fall 2015. He also thanks the hospitality and support of the \mbox{Department} of Mathematics at Brown for an enjoyable environment.  The authors gratefully thank the anonymous referees for constructive and valuable comments that have helped improving the technical presentation of the manuscript in the current version. 


\end{document}